\newtheorem{theorem}{\textbf{Theorem}}[section]
\newtheorem*{theorem*}{\textbf{Theorem}}
\newtheorem*{conjecture*}{\textbf{Conjecture}}
\newtheorem{proposition}[theorem]{\textbf{Proposition}}
\newtheorem{desideratum}[theorem]{\textbf{Desideratum}}
\newtheorem{corollary}[theorem]{\textbf{Corollary}}
\newtheorem{lemma}[theorem]{\textbf{Lemma}}
\newtheoremstyle{plainnonitalic} 
{}                
{}                
{\normalfont}     
{}                
{\bfseries}       
{.}               
{5pt plus 1pt minus 1pt} 
{}                
\theoremstyle{plainnonitalic}
\newtheorem{example}[theorem]{Example}
\newtheorem{axiom}{Axiom}
\newtheorem{remark}{Remark}
\newtheorem{definition}{Definition}
\newcommand{\too}{to [out=0,in=180]}
\begin{document}

\title{The Six-Vertex Yang-Baxter Groupoid}

\author{Daniel Bump}
\address[D.~Bump]{Department of Mathematics, Stanford University, Stanford, CA 94305-2125}
\email{bump@math.stanford.edu}
\urladdr{https://math.stanford.edu/~bump/}

\author{Slava Naprienko}
\address{(S. Naprienko)} 
\email{slava@naprienko.com}

\subjclass{16T25,17B38,81R12}

\begin{abstract}
  A parametrized Yang-Baxter equation is usually defined to be a map from a
  \textit{group} to a set of R-matrices, satisfying the Yang-Baxter commutation
  relation. These are a mainstay of solvable lattice models.
  We will show how the parameter space can sometimes be enlarged to
  a \textit{groupoid}, and give two examples of such groupoid parametrized
  Yang-Baxter equations, within the six vertex model. A \textit{groupoid
  parametrized Yang-Baxter equation} consists of a groupoid $\mathfrak{G}$
  together with a map $\pi:\mathfrak{G}\to\operatorname{End}(V\otimes V)$
  for some vector space $V$ such that the Yang-Baxter commutator
  $\llbracket \pi(u),\pi(w),\pi(v)\rrbracket=0$
  if $u,v\in\mathfrak{G}$ are such that the groupoid composition $w=u\star v$ is
  defined. An important role is played by an \textit{object map} $\Delta:\mathfrak{G}\to M$
  for some set $M$ such that $\Delta(u)=\Delta(v')$, $\Delta(w)=\Delta(v)$
  and $\Delta(w')=\Delta(u')$, where $v\mapsto v'$ is the groupoid inverse map.

  There are two main regimes of the six-vertex model: the free-fermionic point,
  and everything else. For the free-fermionic point, there exists a
  parametrized Yang-Baxter equation with a large parameter group
  $\operatorname{GL}(2)\times\operatorname{GL}(1)$.  For non-free-fermionic
  six-vertex matrices, there are also well-known (group) parametrized Yang-Baxter
  equations, but these do not account for all possible interactions.  Instead
  we will construct a \textit{groupoid} parametrized Yang-Baxter equation that
  accounts for essentially all possible Yang-Baxter equations in the six-vertex
  model. We will also exhibit a separate groupoid for the five-vertex model. 

	We will show how to construct solvable lattice models based on groupoid
  parametrized Yang-Baxter equations. This includes new systems that cannot be
	constructed using group parametrized Yang-Baxter equation.
	The fact that this is possible depends on the associativity axiom for a
	groupoid and illustrates the utility of groupoid parametrized Yang-Baxter
	equations.
  \end{abstract}

\maketitle

\section{Introduction}

Let $V$ be a finite-dimensional vector space. If $r, s, t \in \mathrm{End} (V
\otimes V)$, the \textit{Yang-Baxter commutator} is defined to be the
endomorphism of $V \otimes V \otimes V$ defined by
\[ \llbracket r, s, t \rrbracket = (r \otimes I_V)  (I_V \otimes s)  (t
   \otimes I_V) - (I_V \otimes t)  (s \otimes I_V)  (I_V \otimes r) . \]
Let $G$ be a group. A (group) \textit{parametrized Yang-Baxter equation}
with parameter group $G$ is a map $\pi : G \longrightarrow \mathrm{End} (V
\otimes V)$ such that
\begin{equation}
  \label{eq:paramybe} \llbracket \pi (g), \pi (gh), \pi (g) \rrbracket = 0.
\end{equation}
Group parametrized Yang-Baxter equations are a mainstay of solvable lattice
model theory, underlying much work starting with Baxter's treatment of the
field free six- and eight-vertex model~{\cite{BaxterBook}}. In this paper
we propose a generalization, replacing the group $G$ by a groupoid.
We review groupoids in Section~\ref{sec:groupoids}. 

To define the notion of a \textit{groupoid parametrized Yang-Baxter equation},
let $\mathfrak{G}$ be a groupoid, and $\pi : \mathfrak{G}
\longrightarrow \operatorname{End} (V \otimes V)$ a map such that if $u, v \in G$
and if the product $u \star v$ is defined in $\mathfrak{G}$, then $\llbracket
\pi (u), \pi (u \star v), \pi (v) \rrbracket = 0$. We will further assume that
there exists a set $M$ and a map $\Delta : \mathfrak{G} \longrightarrow M$
such that $u \star v$ is defined if and only if $\Delta (u) = \Delta (v')$,
where $v'$ is the groupoid inverse of $v$, and if $w = u \star v$ then
\begin{equation}
  \label{eq:deltatr} \Delta (u) = \Delta (v'), \qquad \Delta (w) = \Delta (v),
  \qquad \Delta (w') = \Delta (u') .
\end{equation}
If $\mathfrak{G}$ is a group, we may take $M$ to be a single point, so the map
$\Delta$ contains no information. We will call the function $\Delta :
\mathfrak{G} \longrightarrow M$ the {\textit{object map}}.

\begin{remark}
  The existence of the map $\Delta$ follows from the first part of the
  definition. Indeed, by Proposition~\ref{prop:gpoidcat} every groupoid is 
  associated with a groupoid category $\mathcal{C}$. Then $M$ can be taken to be
  the set of objects of $\mathcal{C}$, while the groupoid $\mathfrak{G}$ is
  the set of all morphisms in $\mathcal{C}$. The map $\Delta : \mathfrak{G}
  \longrightarrow M$ then associates with a morphism $v$ its source
  object~$\Delta (v)$. The target of the morphism is $\Delta (v')$. The
  groupoid composition law is the composition in the category $\mathcal{C}$,
  and all three identities in (\ref{eq:deltatr}) then have interpretations.
  The first identity says that if $u$ and $v$ are morphisms the composition $u
  \circ v$ is defined if and only if the source of $u$ is the target of $v$,
  and the other identities say that the source of $u \circ v$ is the source of
  $v$, and that the target of $u \circ v$ is the target of $u$.
  Although the existence of the map $\Delta$ is thus automatic, in practice
  $M$ is a concrete set and the map $\Delta$ plays an important role in the
  construction of the groupoid. Therefore we are making it a part of the definition.
\end{remark}

In the simplest case the groupoid is just a disjoint union of groups. In
this case, $M$ could be any set that is in bijection with the set of groups in
the disjoint union. If this is so, then $\Delta (v) =
\Delta (v')$, and $\llbracket u, w, v \rrbracket = 0$ implies that $\Delta (u)
= \Delta (v) = \Delta (w)$.

As proof of concept, in this paper we will give two groupoid parametrized
Yang-Baxter equations, related to the (non-free-fermionic) six-vertex model,
and the five-vertex models. These groupoids are \textit{not} disjoint unions
of groups.

Both of these groupoids contain well-known subgroups that are the basis of interesting
Yang-Baxter equations with applications to solvable lattice models. So to
demonstrate that groupoid parametrized are potentially important, in
Section~\ref{sec:solvable} we will consider some solvable lattice models based
on an arbitrary groupoid parametrized Yang-Baxter equations. We will see that
such lattice models can be constructed with both row and column solvability.
These are similar to well-known models (for example~{\cite{BaxterBook,KuperbergASM,BBF11}})
which indeed correspond to subgroups of the groupoid. But if we work with a
groupoid parametrized Yang-Baxter equation in which the groupoid is not a
disjoint union of groups, the examples in Section~\ref{sec:solvable} are more
general. Also Zhong~\cite{ZhongSymplectic} contains an example of stochastic
Yang-Baxter equations that can profitably be analyzed from the groupoid
point of view.

Let us mention three questions about this work.
First, we believe that the groupoid parametrized Yang-Baxter equations in
this paper are not isolated example. In fact, it may be seen that there
are interesting groupoid parametrized Yang-Baxter equations within the
(non-field free) eight vertex model. It is possible to believe that
Yang-Baxter groupoids are abundant, though perhaps not easy to find.

The second question is whether the six-vertex groupoid parametrized
Yang-Baxter equation in this paper has a more uniform construction.
Our construction of the groupoid in this paper depends on delicate considerations regarding the boundary
components of the groupoid. Possibly the groupoid can be constructed
algebraically as an action groupoid, or by other tools such as those
developed in~\cite{MackenzieGroupoids}. With the groupoid 
constructed algebraically some difficulties could perhaps be avoided
in the Yang-Baxter equation. 

The third question is how these Yang-Baxter groupoids fit into the
theoretical framework of quantum groups or Hopf algebroids.

\subsection{The Six-Vertex Model}

Let $V =\mathbb{C}^2$, with standard basis $\mathbf{e}_1, \mathbf{e}_2$. A
\textit{six-vertex matrix} is an endomorphism of $V \otimes V$, with basis
$\mathbf{e}_1 \otimes \mathbf{e}_1, \mathbf{e}_1 \otimes \mathbf{e}_2$,
$\mathbf{e}_2 \otimes \mathbf{e}_1$, $\mathbf{e}_2 \otimes \mathbf{e}_2$,
having a matrix of the form
\begin{equation}
  \label{eq:sixvertex} u = \left( \begin{array}{cccc}
    a_1 (u) &  &  & \\
    & c_1 (u) & b_1 (u) & \\
    & b_2 (u) & c_2 (u) & \\
    &  &  & a_2 (u)
  \end{array} \right) .
\end{equation}
Furthermore we require that $c_1 (u), c_2 (u)$ is nonzero. The entries in the
matrix (\ref{eq:sixvertex}) are called \textit{Boltzmann weights} due to the
origin of the subject in statistical mechanics~{\cite{BaxterBook}}.

Let us give two examples of group parametrized Yang-Baxter equations in the
six-vertex model. First, let $q_1$ and $q_2$ be fixed, nonzero complex
numbers. For $z_1, z_2, w \in \mathbb{C}^{\times}$, define a six-vertex matrix
$R^{\mathrm{cf}}_{q_1, q_2} (z_1, z_2, w)$ to be the matrix with the following
Boltzmann weights:
\[ \begin{array}{|l|l|l|l|l|l|}
     \hline
     a_1 (R) & a_2 (R) & b_1 (R) & b_2 (R) & c_1 (R) & c_2 (R)\\
     \hline
     q_1 z_1 - q_2 z_2 & q_1 z_1 - q_2 z_2 & q_1  (z_1 - z_2) & q_2  (z_1 -
     z_2) & z_1 w (q_1 - q_2) & z_2 w^{- 1}  (q_1 - q_2)\\
     \hline
   \end{array} \]
Then one may check that
\[ \llbracket R^{\mathrm{cf}}_{q_1, q_2} (z_1, z_2, w_1),
   R^{\mathrm{cf}}_{q_1, q_2} (z_1 z_3, z_2 z_4, w_1 w_2),
   R^{\mathrm{cf}}_{q_1, q_2} (z_3, z_4, w_2) \rrbracket = 0. \]
So this is an example of a parametrized Yang-Baxter equation in the six-vertex
model with parameter group $(\mathbb{C}^{\times})^3$.

A second example is, with $z_1, z_2, w \in \mathbb{C}^{\times}$, the
$R^{\mathrm{ff}}_{q_1, q_2} (z_1, z_2, w)$ to be the matrix with the following
Boltzmann weights:
\[ \begin{array}{|l|l|l|l|l|l|}
     \hline
     a_1 (R) & a_2 (R) & b_1 (R) & b_2 (R) & c_1 (R) & c_2 (R)\\
     \hline
     q_1 z_1 - q_2 z_2 & q_1 z_2 - q_2 z_1 & q_1  (z_1 - z_2) & q_2  (z_1 -
     z_2) & z_1 w (q_1 - q_2) & z_2 w^{- 1}  (q_1 - q_2)\\
     \hline
   \end{array} \]
Again, we have
\[ \llbracket R^{\mathrm{ff}}_{q_1, q_2} (z_1, z_2, w_1),
   R^{\mathrm{ff}}_{q_1, q_2} (z_1 z_3, z_2 z_4, w_1 w_2),
   R^{\mathrm{ff}}_{q_1, q_2} (z_3, z_4, w_2) \rrbracket = 0. \]
So this is also a parametrized Yang-Baxter equation in the six-vertex model
with parameter group $(\mathbb{C}^{\times})^3$.

These two examples are very similar: the weights are the same
\textit{except} for that $a_2$ entries, which differ. The weights
$R^{\mathrm{ff}}_{q_1, q_2} (z_1, z_2, w)$ are \textit{free-fermionic},
meaning that they satisfy the condition
\[ a_1 (R) a_2 (R) + b_1 (R) b_2 (R) - c_1 (R) c_2 (R) = 0. \]
On the other hand the weights $R^{\mathrm{cf}}_{q_1, q_2} (z_1, z_2, w)$ are
not free fermionic, but instead satisfy the condition $a_1 (R) = a_2 (R)$.

Despite the similarity between the above two examples, there are nevertheless
important differences between the free-fermionic regime and the
non-free-fermionic regime in the six-vertex model. The free-fermionic example
$R^{\mathrm{ff}}_{q_1, q_2}$ can be extended to a parametrized Yang-Baxter
equation with a larger group $\mathrm{GL} (2, \mathbb{C}) \times \mathrm{GL}
(1, \mathbb{C})$. In contrast $R^{\mathrm{cf}}_{q_1, q_2}$ does not extend to
a properly larger connected group.

The dichotomy between the free-fermionic and non-free-fermionic cases can be
explained as follows. Drinfeld~{\cite{DrinfeldQuantumGroups}} and
Jimbo~{\cite{Jimbo}} showed that solutions to the Yang-Baxter equation come
from quantum groups. For non-free-fermionic parametrized Yang-Baxter equations
such as $R^{\mathrm{cf}}$, the relevant quantum group is $U_q 
(\mathfrak{g}\mathfrak{l}_2)$ or its affinization, or Drinfeld twists,
including two-parameter quantum groups. For free-fermionic Yang-Baxter
equation such as $R^{\mathrm{ff}}$, the relevant quantum group is the
superalgebra $U_q  (\mathfrak{s}\mathfrak{l}(1|1))$, or its affinization
({\cite{BazhanovShadrikov,BrackenGouldZhangDelius,BBGSuper}}).

Differences between the representation theories of $U_q (\mathfrak{gl}_2)$ and
$U_q (\mathfrak{sl} (1|1))$ explain some of the
differences between these cases. Thus $U_q (\mathfrak{sl} (1|1))$ has many
irreducible two-dimensional representations, accounting for the large
$\mathrm{GL} (2) \times \mathrm{GL} (1)$-parametrized Yang-Baxter equation in
the free-fermionic case. In contrast, this is not true for $U_q
(\mathfrak{sl}_2)$, which has essentially only one. (Its affinization has a
one-parameter family of two-dimensional representations.)

The group parametrized Yang-Baxter equations in the non-free-fermionic case do
not account for all possible interactions between six-vertex matrices. To
demonstrate this, let us consider the parametrized Yang-Baxter equation
$R^{\mathrm{cf}}_{q_1, q_2}$. We choose an element $r = R^{\mathrm{cf}}_{q_1,
q_2} (z_1, z_2, w)$ of this group. There are always six-vertex matrices $u$
and $v$ that are outside the group such that $\llbracket u, v, r \rrbracket =
0$. Indeed, by Corollary~\ref{cor:conddel} below, if $u$ is any six-vertex
matrix such that $\Delta (u) = \Delta (r^{\ast}) = (\frac{q_1 + q_2}{q_2},
\frac{q_1 + q_2}{q_1})$, then there exists a $u$ such that $\llbracket u, v, r
\rrbracket = 0$. The condition on $u$ restricts it to a four-dimensional
variety, but the group of matrices $R^{\mathrm{cf}}_{q_1, q_2}$ is
three-dimensional, so $u$ and $v$ that are outside the group certainly exist.

\begin{theorem*}
  There is a groupoid parametrized Yang-Baxter equation $\pi : \mathfrak{G}
  \to \mathrm{End} (V \otimes V)$ with $V =\mathbb{C}^2$ with object map
  $\Delta : \mathfrak{G} \longrightarrow (\mathbb{C}^{\times})^2$ such that if
  $u$ and $v$ are six-vertex matrices in general position then there exists a
  six-vertex matrix $w$ such that $\llbracket u, w, v \rrbracket = 0$ if and
  only if there exist $g, h \in \mathfrak{G}$ such that $u = \pi (g)$ and $v =
  \pi (h)$, and $g \star h$ is defined. Then $w = \pi (g \star h)$ is the
  solution.
\end{theorem*}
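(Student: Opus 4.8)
The plan is to reduce the Yang--Baxter commutator to an explicit finite linear system, solve that system for the middle matrix, and read off from the solvability condition both the object map $\Delta$ and the composition law $\star$. With the commutator convention above, $\llbracket u,w,v\rrbracket=0$ is the equation $u_{12}\,w_{23}\,v_{12}=v_{23}\,w_{12}\,u_{23}$ on $V\otimes V\otimes V$. First I would exploit the ice rule: a six-vertex matrix preserves the number of $\mathbf{e}_2$'s, so every factor above preserves the grading of $V^{\otimes 3}$ by total weight, and the equation splits into blocks of sizes $1,3,3,1$. The two extreme blocks are scalar and hold automatically (both sides reduce to a product of $a_1$'s, respectively $a_2$'s), while the two middle $3\times 3$ blocks carry all the content. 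Since the commutator is linear in each argument, for fixed $u,v$ these blocks constitute a homogeneous linear system in the six Boltzmann weights of $w$.

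Second, I would solve this system. Eliminating the off-diagonal unknowns and using the ice-rule relations should collapse it to a handful of scalar equations that determine $w$ up to an overall scalar, provided a single compatibility constraint relating $u$ and $v$ is satisfied; the system having a nonzero six-vertex solution \emph{at all} is equivalent to that constraint. I expect the constraint to express the matching of the integrability invariant of the two matrices. Writing $S(u)=a_1a_2+b_1b_2-c_1c_2$, the identity $S=(q_1+q_2)(z_1-z_2)a_1=\tfrac{q_1+q_2}{q_1}a_1b_1$ on the family $R^{\mathrm{cf}}_{q_1,q_2}$ points to the two ratios $S/(a_1b_1)$ and $S/(a_2b_2)$, whose product is the square of Baxter's anisotropy parameter. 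Accordingly I would set
\[
  \Delta(u)=\Bigl(\tfrac{S(u)}{a_1(u)b_1(u)},\ \tfrac{S(u)}{a_2(u)b_2(u)}\Bigr)\in(\mathbb{C}^\times)^2 ,
\]
take the groupoid inverse $v'$ to be the $1\leftrightarrow 2$ index flip (which fixes $S$ and swaps the two components of $\Delta$), and show that the system is solvable precisely when $\Delta(u)=\Delta(v')$. Because this is a single equivalence, it yields both directions of the theorem's ``if and only if'': necessity gives that any six-vertex solution $w$ for $u,v$ in general position forces $\Delta(u)=\Delta(v')$, and sufficiency produces the solution explicitly. Defining $\pi$ through this parametrization and $u\star v$ to be the resulting $w$, uniqueness up to scalar gives $w=\pi(g\star h)$.

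Third, I would assemble a groupoid. Take $M=(\mathbb{C}^\times)^2$, let the morphisms be six-vertex matrices normalized to kill the overall scalar, with source $\Delta(v)$ and target $\Delta(v')$, and let composition be $\star$. The transformation rules~\eqref{eq:deltatr} must then be checked against the explicit formula for $w$, the identity morphisms identified (the permutation-type matrices fixing each object), and $v'\star v=\mathrm{id}$ verified, so that $\pi:\mathfrak{G}\to\operatorname{End}(V\otimes V)$ is a genuine groupoid parametrized Yang--Baxter equation whose image is, in general position, all six-vertex matrices.

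The main obstacle will be associativity of $\star$ together with the degenerate boundary strata. In a group parametrized equation associativity is automatic, but here the composable triples lie over the variety cut out by the matching of $\Delta$, and one must prove $(f\star g)\star h=f\star(g\star h)$ whenever either side is defined. I would argue by uniqueness: both sides are six-vertex solutions of the same overdetermined Yang--Baxter configuration on $V^{\otimes 4}$ (the standard tetrahedron consistency), hence agree wherever the solution is unique, and then extend by Zariski continuity. The delicate step is exactly this extension across the loci where a weight vanishes (some $b_i=0$, or the free-fermionic hypersurface $S=0$), where $\Delta$ degenerates; it is here that the general-position hypothesis in the statement does its work, and a separate analysis of the boundary components of $\mathfrak{G}$ is required.
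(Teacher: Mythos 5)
Your first two steps track the paper closely: the reduction of $\llbracket u,w,v\rrbracket=0$ to a linear system in the weights of $w$, the solvability constraint, and the definition $\Delta(u)=\bigl(N(u)/(a_1(u)b_1(u)),\,N(u)/(a_2(u)b_2(u))\bigr)$ all match Theorem~\ref{thm:wybe} and Corollary~\ref{cor:conddel}. But your identification of the groupoid inverse is wrong, and it breaks the central ``if and only if.'' The condition the linear system actually produces is
\[
\frac{N(u)\,b_1(v)}{a_1(u)}=\frac{N(v)\,b_1(u)}{a_2(v)},\qquad
\frac{N(u)\,b_2(v)}{a_2(u)}=\frac{N(v)\,b_2(u)}{a_1(v)},
\]
i.e.\ $\Delta(u)=\Delta(v^{\ast})$ with $\Delta_1(v^{\ast})=N(v)/(a_2(v)b_1(v))$ and $\Delta_2(v^{\ast})=N(v)/(a_1(v)b_2(v))$: only the $a$-index flips, not the $b$-index. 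Here $v^{\ast}=\det(B(v))\,v^{-1}$ (so $c_1^{\ast}=c_2$, $b_i^{\ast}=-b_i$, $a_i^{\ast}=\det B(v)/a_i$), and the groupoid inverse is the normalization $v'=\frac{1}{c_1(v)c_2(v)}v^{\ast}$. Your proposed $1\leftrightarrow 2$ index flip instead gives $\Delta_1(v')=\Delta_2(v)=N(v)/(a_2(v)b_2(v))$, so ``solvable iff $\Delta(u)=\Delta(v')$'' is false for your $v'$. The error is structural, not cosmetic: with the flip, $\Delta(v')$ would be a function of $\Delta(v)$, coupling source and target of every morphism, whereas in the actual groupoid $\Delta(v)$ and $\Delta(v^{\ast})$ are decoupled (only the product $\Delta_1\Delta_2$ is preserved, by (\ref{eq:deloost})) --- and this decoupling is exactly why $\mathfrak{G}_{\mathrm{nf}}$ is a genuine groupoid rather than a disjoint union of groups. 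Moreover the flip is not proportional to $v^{-1}$, so $v\star v'$ could never be an identity morphism.

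The second gap is that you flag the boundary strata as ``the main obstacle'' but propose no mechanism for them, and without one the groupoid does not close up. Even for $u,v$ in the open stratum $\Omega^{\circ}$, the normalized solution $w$ can land in $\Omega_b$ (where $b_1=b_2=0$, $a_1a_2=c_1c_2$), in $\Omega_a$, or in $\Omega_B$ --- the paper proves exactly this (Lemma~\ref{lem:omegaab} and the proposition following it) --- and on $\Omega_a\cup\Omega_b$ your $\Delta$ is $0/0$. Worse, the identity matrix $I_{V\otimes V}$ itself lies in $\Omega_b$, so a single matrix would have to serve as the identity at \emph{every} object $(d_1,d_2)\in(\mathbb{C}^{\times})^2$; your ``permutation-type matrices fixing each object'' cannot supply the idempotents. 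The paper's resolution is to blow up these strata: $\mathfrak{G}_{\mathrm{nf}}=\{(u,d_1,d_2):a_1(u)b_1(u)d_1=a_2(u)b_2(u)d_2=N(u)\}$, so that over $\Omega_a$ and $\Omega_b$ each matrix carries a torus of groupoid elements, yielding one idempotent $\mathbf{I}_{d_1,d_2}=(I_{V\otimes V},d_1,d_2)$ per object (Proposition~\ref{prop:idempotents}), and $\Delta$, $\ast$, $\star$ extend continuously (Proposition~\ref{prop:gpostarcontin}). Associativity is then proved by explicit computation in general position (Proposition~\ref{prop:gpogenasso}) and extended by continuity using approximating sequences with prescribed $\Delta$-values (Lemma~\ref{lem:cxdense}); your tetrahedron-uniqueness sketch might substitute for the generic computation, but it gives no purchase on the boundary, which is where the actual content of the construction lies.
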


Thus this groupoid parametrized Yang-Baxter equation accounts for all
Yang-Baxter identities $\llbracket r, s, t \rrbracket = 0$ with $r, s, t$ in
an open subset $S^{\circ}$ of six-vertex matrices. A disjoint union of
groupoids is a groupoid. The groupoid $\mathfrak{G}$ decomposes into the
disjoint union of a free-fermionic part $\mathfrak{G}_{\mathrm{ff}}$, which is
actually the group $\mathrm{GL} (2) \times \mathrm{GL} (1)$, and a more
interesting non-free-fermionic part $\mathfrak{G}_{\mathrm{nf}}$, which is a
groupoid, but not a disjoint union of groups.

\begin{remark}
  For $r, s, t$ outside $S^{\circ}$, there may be further boundary groupoids
  that we do not fully analyze. Particularly, there are \textit{five-vertex}
  matrices which have themselves important applications. We will 
  prove in Section~\ref{sec:fivever} that the matrices with $b_2 = 0$ can be
  organized into an interesting groupoid. The matrices with $b_1 = 0$
  similarly fit into a groupoid. These lower-dimensional groupoids are not
  included in $\mathfrak{G}$. We are aware of at least one example of a
	groupoid parametrized Yang-Baxter equation outside the six-vertex model.
	Hence we hope that groupoid parametrized Yang-Baxter equations will be
	an important phenomenon.
\end{remark}

This groupoid parametrized Yang-Baxter equation has the stronger property that
$g \star h$ is defined if and only if $\llbracket \pi (g), w, \pi (h)
\rrbracket = 0$ has a solution, and if this is the case then $w$ is a constant
multiple of $\pi (g \star h)$. This strong property justifies our assertion
that the groupoid parametrized Yang-Baxter equation accounts for essentially 
all interactions in the six-vertex model.

In this introductory section we will describe the groupoid $\mathfrak{G}$ in
rough terms, as an outline of the somewhat technical construction that will
follow in later sections. The map $\pi : \mathfrak{G}_{\mathrm{nf}} \to
\mathrm{End} (V \otimes V)$ is not an isomorphism onto its image. The set $\pi
(\mathfrak{G}_{\mathrm{nf}})$ is a specific set $\bar{\Omega}$ of matrices in
$\mathrm{End} (V \otimes V)$. The set $\bar{\Omega}$ can be thought of as a
quasi-affine variety, and it has a Zariski topology. The set $\pi :
\mathfrak{G}_{\mathrm{nf}} \rightarrow \bar{\Omega}$ is a morphism that is a
birational equivalence. Essentially, $\mathfrak{G}_{\mathrm{nf}}$ is obtained
by ``blowing up'' certain lower-dimensional boundary components.

The object map $\Delta$ takes values in $M = (\mathbb{C}^{\times})^2$. If $u
\in \Omega$ define
\[ N (u) = a_1 (u) a_2 (u) + b_1 (u) b_2 (u) - c_1 (u) c_2 (u) \]
\[ \Delta_1 (u) = \frac{N (u)}{a_1 (u) b_1 (u)}, \qquad \Delta_2 (u) = \frac{N
   (u)}{a_2 (u) b_2 (u)} . \]
We find that
\[ \Delta_1 (u') = \frac{N (u)}{a_1 (u) b_1 (u)}, \qquad \Delta_2 (u') =
   \frac{N (u)}{a_2 (u) b_2 (u)} . \]
Define $\Delta (u) = (\Delta_1 (u), \Delta_2 (u))$. This function is
holomorphic on $\Omega^{\circ}$ but it cannot be extended to all of
$\overline{\Omega}$ since the numerator and denominator can both vanish. This
is mitigated by blowing up two boundary components, and $\Delta$ extends to a
continuous function on all of $\mathfrak{G}$.

Although the six-vertex groupoid is not a disjoint union of groups, it can
still be decomposed into a disjoint union of smaller groupoids that we will
call {\textit{blocks}}.

\begin{remark}
  Define $\Delta_0 (u) = \Delta_1 (u) \Delta_2 (u)$. It may be checked that
  $\Delta_0 (u) = \Delta_0 (u')$. If $\delta_0 \in \mathbb{C}^{\times}$ is
	given, define the \textit{block}
  \[ \mathfrak{G}_{\mathrm{nf}} (d_0) = \{u \in \mathfrak{G}_{\mathrm{nf}} |
     \Delta_0 (u) = \delta_0 \} . \]
  Note that if $u$ and $v$ lie in different blocks then $\Delta (u) = \Delta
  (v')$ is impossible since this would imply that $\Delta_0 (u) = \Delta_0
  (v')$. Thus if $u, v, w \in \mathfrak{G}$ and $\llbracket u, w, v \rrbracket
  = 0$ all three $u, v, w$ must lie in the same block. Thus we have a further
  decomposition
  \[ \mathfrak{G}_{\mathrm{nf}} = \bigsqcup_{d_0} \mathfrak{G}_{\mathrm{nf}}
     (d_0) . \]
\end{remark}

\

Another paper relating parametrized Yang-Baxter equations to groupoids is
Felder and Ren~{\cite{FelderRen}}. However their use of a groupoid is
different from ours.

This paper is based on~{\cite{NaprienkoGroupoid}}, to which it adds analysis
of the boundary components of the Six-Vertex Groupoid.

We thank Amol Aggarwal, Ben Brubaker, Valentin Buciumas, Henrik Gustafsson,
Andrew Hardt and Travis Scrimshaw for helpful conversations and comments.

\section{Groupoids}\label{sec:groupoids}

A \textit{groupoid} is a set $G$ with a partially defined composition. This
consists of a map $\mu : S \longrightarrow G$, where $S$ is a subset of $G
\times G$. If $a, b \in G$ we say that the product $a \star b$ is
\textit{defined} if $(a, b) \in S$, and then we write $a \star b = \mu (a,
b)$. The groupoid is also required to have an ``inverse map'' $x \mapsto x'$
from $G \rightarrow G$. The inverse map is more commonly denoted as $x \mapsto
x^{- 1}$, but we will be concerned with a groupoid whose elements are
matrices, and we will reserve the notation $x^{- 1}$ for the matrix inverse.
The following axioms are required.

\begin{axiom}
  [Associative Law] If $a \star b$ and $b \star c$ are defined then $(a \star
  b) \star c$ and $a \star (b \star c)$ are defined, and they are equal.
\end{axiom}

We say that $a \star b \star c$ is \textit{defined} if $a \star b$ and $b
\star c$ are defined, and then we denote $(a \star b) \star c = a \star (b
\star c)$ as $a \star b \star c$.

\begin{axiom}
  [Inverse] The compositions $a \star a'$ and $a' \star a$ are always defined.
  Thus if $a \star b$ is defined, then $a \star b \star b'$ is defined, and
  this is required to equal $a$. Similarly $a' \star a \star b$ is defined,
  and this is required to equal~$b$.
\end{axiom}

\begin{example}
  \label{example:categorygroupoid}A category $\mathcal{C}$ is \textit{small}
  if its class of objects is a set. A small category is a \textit{groupoid
  category} if every morphism is an isomorphism. Assuming this, the disjoint
  union
  \[ G = \bigsqcup_{A, B \in \mathcal{C}} \mathrm{Hom} (A, B) \]
  is a groupoid, with the $\star$ operation being composition: thus if $a \in
  \mathrm{Hom} (A, B)$ and $b \in \mathrm{Hom} (C, D)$, then $a \star b$ is
  defined if and only if $B = C$. The groupoid axioms are clear.
\end{example}

\begin{lemma}
  \label{lem:gpoinvlem}In a groupoid, we have $(a')' = a$. Moreover if $a
  \star b$ is defined then so is $b' \star a'$ and $(a \star b)' = b' \star
  a'$.
\end{lemma}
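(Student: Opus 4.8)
The plan is to prove the two assertions in order, since the first, $(a')'=a$, is needed even to make sense of the second. Throughout I use only the Associative Law and the two cancellation identities packaged in the Inverse axiom: for any $x,y$ with $x\star y$ defined, one has $x\star y\star y' = x$ and $x'\star x\star y = y$, with both left sides automatically defined. I also freely use that $x\star x'$ and $x'\star x$ are always defined, so that the relevant triple products exist and associativity applies.

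For $(a')'=a$, I would first record two instances of the cancellation identities. Applying $x'\star x\star y=y$ with $x=a,\,y=a'$ gives $a'\star a\star a'=a'$, and applying it with $x=a',\,y=a$ gives $(a')'\star a'\star a=a$. Right-multiplying the second relation by $a'$ and regrouping by associativity, the left side becomes $(a')'\star(a'\star a\star a')=(a')'\star a'$ by the first relation, while the right side is $a\star a'$; hence $(a')'\star a'=a\star a'$. Right-multiplying this equality by $(a')'$ and applying $x\star y\star y'=x$ to each side (with $y=a'$ on both, so the left collapses to $(a')'$ and the right to $a$) yields $(a')'=a$. Each regrouping is legitimate because the Inverse axiom guarantees the needed two-term products are defined and associativity then supplies the three-term products.

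For the formula $(a\star b)'=b'\star a'$ I would proceed in two stages. The first and harder stage is to show that $b'\star a'$ is \emph{defined} whenever $a\star b$ is. This is the point where the bare composition law does not suffice on its own: I would develop the short calculus of source and target units $d(x)=x'\star x$ and $r(x)=x\star x'$, proving from the cancellation identities that these are idempotent, that $x\star d(x)=x$ and $r(x)\star x=x$, that $d(x')=r(x)$, and --- crucially --- that $x\star y$ is defined if and only if $d(x)=r(y)$. Granting this composability criterion, $a\star b$ defined gives $d(a)=r(b)$, whence $r(a')=d(a)=r(b)=d(b')$, and the criterion applied to $b',a'$ shows $b'\star a'$ is defined. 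Establishing the composability criterion is the main obstacle; everything else is formal manipulation.

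In the second stage, writing $c=a\star b$ and $d=b'\star a'$, I would compute $c\star d$ by associativity as $\big((a\star b)\star b'\big)\star a' = a\star a'$, using $x\star y\star y'=x$ with $x=a,\,y=b$; likewise the unit calculus gives $c\star c'=r(c)=r(a)=a\star a'$, since $c$ and $a$ share a target. Then the identity $x'\star x\star y=y$ with $x=c,\,y=d$ gives $d=c'\star(c\star d)=c'\star(a\star a')=c'\star(c\star c')$, and $x\star y\star y'=x$ with $x=c',\,y=c$ collapses the right-hand side to $c'$. Hence $d=c'$, that is $(a\star b)'=b'\star a'$, completing the proof.
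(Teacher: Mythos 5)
Your first half is correct and complete: extracting the two cancellation instances $a'\star a\star a'=a'$ and $(a')'\star a'\star a=a$, deriving $(a')'\star a'=a\star a'$, and then cancelling $a'$ on the right is a legitimate — in fact more fully detailed — version of the paper's one-line argument that $(a')'\star a'\star a$ equals both $(a')'$ and $a$. The gap is in the second half, and you name it yourself: you reduce the definedness of $b'\star a'$ to the composability criterion ``$x\star y$ is defined if and only if $d(x)=r(y)$'' and then decline to prove it, calling it the main obstacle. That sufficiency direction ($d(x)=r(y)$ implies $x\star y$ defined) genuinely does not follow from the kind of manipulations you perform elsewhere; it needs the idempotent-insertion trick $x\star y=x\star B\star B\star y$ with $B=d(x)=r(y)$, assembling the product by two applications of the Associative Law starting from the defined pairs $x\star B$, $B\star B$, $B\star y$. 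In the paper this is exactly the content of the proof of Proposition~\ref{prop:gpoidcat}, which comes \emph{after} this lemma; a proof that asserts the criterion without supplying this step is incomplete at its crux. (The necessity direction, the unit laws $x\star d(x)=x$ and $r(x)\star x=x$, and the uniqueness of one-sided units that your later step $r(c)=r(a)$ quietly requires are all fine consequences of cancellation together with your part one.)

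The detour is also unnecessary: the paper gets definedness of $b'\star a'$ directly from the cancellation identities, with no source/target calculus. Write $c=a\star b$. Associativity plus the first cancellation give $c\star b'=(a\star b)\star b'=a$; hence $c'\star a=c'\star(c\star b')$ is defined (both $c'\star c$ and $c\star b'$ are defined) and equals $b'$ by the second cancellation; then $(c'\star a)\star a'$ is defined by associativity (both $c'\star a$ and $a\star a'$ are defined), so $b'\star a'$ \emph{is} defined, and the first cancellation applied to the pair $(c',a)$ collapses $c'\star a\star a'$ to $c'$. This is precisely what the paper's display $(a\star b)'=(a\star b)'\star a\star b\star b'\star a'=b'\star a'$ encodes: definedness and the identity in one pass. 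So either prove the criterion's sufficiency via the insertion trick to complete your architecture, or replace stage one by the three-step cancellation computation above — in which case your stage two becomes redundant as well.
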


\begin{proof}
  Since $(a')' \star a'$ and $a' \star a$ are both defined, by the Associative
  Law the product $(a')' \star a' \star a$ is defined, and using the Inverse
  Axiom, this equals both $(a')'$ and $a$. For the second assertion, assume $a
  \star b$ is defined. It follows from the axioms that
  \[ (a \star b)' = (a \star b)' \star a \star b \star b' \star a' = 
  b' \star a' . \qedhere \]

\end{proof}

Given a groupoid $G$, let us say an element $A$ is \textit{idempotent} if $A
\star A$ is defined and $A \star A = A$.

\begin{lemma}
  An element $A \in G$ is an idempotent if and only if $A = g \star g'$ for
  some $g \in G$. If $A$ is idempotent then $A = A'$.
\end{lemma}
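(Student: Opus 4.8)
The plan is to prove the two directions of the equivalence separately and then deduce the final sentence from Lemma~\ref{lem:gpoinvlem}. Throughout, the discipline to maintain is that no product may be written down until it has been shown to be defined, since the composition is only partial; the Inverse Axiom is the only mechanism that manufactures defined products out of the given data, and every use of the Associative Law must first certify that the two relevant binary products exist.

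For the forward direction, suppose $A$ is idempotent; I claim one may simply take $g = A$. Since $A \star A$ is defined, the Inverse Axiom applied with $a = b = A$ shows that $A \star A \star A'$ is defined and equals $A$. But $A \star A = A$ by idempotence, so $A \star A \star A' = (A \star A) \star A' = A \star A'$, and hence $A \star A' = A$. That is, $A = A \star A' = g \star g'$ with $g = A$, as desired.

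For the reverse direction, suppose $A = g \star g'$ for some $g \in G$. First I would record the consequence of the Inverse Axiom for the defined product $g \star g'$: taking $a = g$, $b = g'$ and using $(g')' = g$ from Lemma~\ref{lem:gpoinvlem} gives $g \star g' \star g = g$, that is, $A \star g = g$. The main point is then to show that $A \star A$ is itself defined and equals $A$, and this is the step where the partial nature of the composition makes things delicate, since one cannot form $A \star A$ directly without justification. I would argue as follows: $A \star g$ is defined and $g \star g'$ is defined, so by the Associative Law $(A \star g) \star g' = A \star (g \star g')$ is defined; evaluating the left-hand side by means of $A \star g = g$ yields $g \star g' = A$. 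Therefore $A \star A = A \star (g \star g') = A$, proving that $A$ is idempotent.

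Finally, for the last assertion, given an idempotent $A$ write $A = g \star g'$ (for instance with $g = A$, as just established). By Lemma~\ref{lem:gpoinvlem} we then compute $A' = (g \star g')' = (g')' \star g' = g \star g' = A$, again using $(g')' = g$. I expect the chief obstacle to be precisely the bookkeeping of definedness in the reverse direction: chaining the Inverse Axiom into associativity to conclude that $A \star A$ exists, rather than assuming it, is the only genuinely non-routine part of the argument.
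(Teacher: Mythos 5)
Your proof is correct and takes essentially the same route as the paper's: the forward direction is exactly the paper's chain $A = A \star A = A \star A \star A' = A \star A'$, and the final claim $A = A'$ invokes Lemma~\ref{lem:gpoinvlem} just as the paper does. Your reverse direction simply spells out, with careful definedness bookkeeping via the Inverse Axiom and associativity, the verification that $g \star g'$ is idempotent, which the paper dismisses as ``easy to check.''
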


\begin{proof}
  It is easy to check that $g \star g'$ is idempotent. Conversely if $A$ is
  idempotent, then $A = A \star A'$ since $A = A \star A = A \star A \star A'
  = A \star A'$, and so $A$ can be written $g \star g'$ with $g = A$. Now if
  $A = g \star g'$ then $A = A'$ as a consequence of
  Lemma~\ref{lem:gpoinvlem}.
\end{proof}

\begin{lemma}
  \label{lem:lridem}If $g \in G$ then there are unique idempotents $A$ and $B$
  such that $g = g \star A$ and $g = B \star g$.
\end{lemma}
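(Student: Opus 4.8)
The plan is to produce explicit candidates for $A$ and $B$ and then show they are the only possibilities. The natural guesses are $A = g' \star g$ and $B = g \star g'$, both defined by the Inverse Axiom. First I would check that each is idempotent: $B = g \star g'$ is already in the form $h \star h'$, and since $(g')' = g$ by Lemma~\ref{lem:gpoinvlem} we may write $A = g' \star g = g' \star (g')'$, which is again of the form $h \star h'$ (with $h = g'$). By the preceding lemma the idempotents are exactly the elements of the form $h \star h'$, so both $A$ and $B$ are idempotent.

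For the existence of the defining identities, the key observation is the single relation $g \star g' \star g = g$. This comes directly from the Inverse Axiom: taking $a = g$ and $b = g'$, the product $a \star b = g \star g'$ is defined, so $a \star b \star b' = a$ reads $g \star g' \star (g')' = g$, i.e. $g \star g' \star g = g$. Since $g \star g'$ and $g' \star g$ are both defined, the Associative Law lets me regroup this relation two ways: $g \star (g' \star g) = g$ gives $g \star A = g$, and $(g \star g') \star g = g$ gives $B \star g = g$. Thus a single application of the axioms yields both required equations.

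For uniqueness, suppose $A$ is any idempotent with $g \star A = g$. I would multiply on the left by $g'$. Both $g' \star g$ and $g \star A$ are defined, so associativity gives $(g' \star g) \star A = g' \star (g \star A) = g' \star g$; on the other hand the Inverse Axiom in the form $a' \star a \star b = b$, with $a = g$ and $b = A$, gives $g' \star g \star A = A$. Comparing, $A = g' \star g$. The argument for $B$ is the mirror image, multiplying $B \star g = g$ on the right by $g'$ and using $a \star b \star b' = a$ with $a = B$, $b = g$. The one point requiring care throughout---and the only real obstacle---is the bookkeeping of which products are defined before each invocation of the Associative Law, since in a genuine groupoid (as opposed to a group) one cannot rearrange products freely; here, however, every product that appears is guaranteed defined by the Inverse Axiom or by a prior step.
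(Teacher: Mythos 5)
Your proof is correct and follows essentially the same route as the paper's: you take $A = g' \star g$ and $B = g \star g'$, verify the defining identities via the Inverse Axiom, and obtain uniqueness by multiplying $g \star A = g$ on the left by $g'$ (respectively $B \star g = g$ on the right by $g'$) and invoking $g' \star g \star A = A$. Your version is merely more explicit about definedness bookkeeping and about why the candidates are idempotent (via the preceding lemma's characterization of idempotents as elements of the form $h \star h'$), details the paper leaves implicit.
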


\begin{proof}
  We can take $A = g' \star g$, and this is an idempotent such that $g \star A
  = g$. Conversely if $A'$ is any other element such that $g \star A' = g$,
  then $g^{- 1} \star g = g^{- 1} \star g \star A' = A'$, so $A' = A$. The
  statements about $B$ are proved similarly.
\end{proof}

\begin{proposition}
	\label{prop:gpoidcat}
  Let $G$ be a groupoid. Then there exists a groupoid category whose groupoid
  (as in Example~\ref{example:categorygroupoid}) is isomorphic to~$G$.
\end{proposition}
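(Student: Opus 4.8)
The plan is to build the required groupoid category $\mathcal{C}$ directly out of $G$, taking its objects to be the idempotents of $G$ and reading the source and target of each element off the left and right idempotents furnished by Lemma~\ref{lem:lridem}. Concretely, I would let $\mathrm{Ob}(\mathcal{C})$ be the set of idempotents and, for $g \in G$, set $\sigma(g) = g \star g'$ and $\tau(g) = g' \star g$; both are idempotents by the lemma characterizing idempotents as elements of the form $x \star x'$. For idempotents $A, B$ I define
\[
  \mathrm{Hom}(A,B) = \{\, g \in G : \sigma(g) = A,\ \tau(g) = B \,\}.
\]
Since $\sigma(g)$ and $\tau(g)$ are determined by $g$, every element of $G$ lies in exactly one of these Hom-sets, so $G = \bigsqcup_{A,B} \mathrm{Hom}(A,B)$ as a set; this is precisely the underlying set of the groupoid attached to $\mathcal{C}$ in Example~\ref{example:categorygroupoid}. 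I would take $\mathrm{id}_A = A$ for each object $A$, which lies in $\mathrm{Hom}(A,A)$ because $A = A'$ and $A \star A = A$.

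The crux of the argument, and the step I expect to be the main obstacle, is the single composability identity: for $g, h \in G$, the product $g \star h$ is defined if and only if $\tau(g) = \sigma(h)$, i.e.\ $g' \star g = h \star h'$. For the forward direction, if $g \star h$ is defined then the Inverse Axiom gives that $g' \star g \star h$ is defined and equals $h$, so the idempotent $g' \star g$ is a left identity for $h$; by the uniqueness clause of Lemma~\ref{lem:lridem} this forces $g' \star g = h \star h'$. For the converse, writing $e = g' \star g = h \star h'$, both $g \star e$ and $e \star h$ are defined (the first equals $g$, the second equals $h$, again by Lemma~\ref{lem:lridem}), so the Associative Law makes $g \star (e \star h) = g \star h$ defined. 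This identity is what pins the partial composition of $G$ exactly to the composability rule of $\mathcal{C}$: nothing composes that should not, and everything that should does, matching the condition $B = C$ of Example~\ref{example:categorygroupoid}.

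With the composability identity in hand, the remaining verifications are routine and rely only on the Associative and Inverse Axioms together with Lemmas~\ref{lem:gpoinvlem} and~\ref{lem:lridem}. I would check that $\star$ restricts to a composition $\mathrm{Hom}(A,B) \times \mathrm{Hom}(B,D) \to \mathrm{Hom}(A,D)$ by computing $\sigma(g \star h)$ and $\tau(g \star h)$: using $(g \star h)' = h' \star g'$ from Lemma~\ref{lem:gpoinvlem} and regrouping defined products, $\sigma(g \star h) = g \star (g' \star g) \star g' = g \star g' = \sigma(g)$ and similarly $\tau(g \star h) = \tau(h)$. Associativity of composition in $\mathcal{C}$ is immediate from the Associative Axiom, and $A$, $B$ act as the two-sided identities on $\mathrm{Hom}(A,B)$ because $A \star g = g = g \star B$ for $g \in \mathrm{Hom}(A,B)$. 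Finally, every morphism is invertible: for $g \in \mathrm{Hom}(A,B)$ one has $g' \in \mathrm{Hom}(B,A)$ with $g \star g' = \mathrm{id}_A$ and $g' \star g = \mathrm{id}_B$, so $\mathcal{C}$ is a groupoid category, and it is small since its objects form a subset of the set $G$. The groupoid associated with $\mathcal{C}$ then has the same underlying set $G$, the same $\star$, and the same inverse map, giving the desired isomorphism.
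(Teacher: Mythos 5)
Your proposal is correct and takes essentially the same route as the paper: objects are the idempotents, the Hom-sets are carved out of $G$ by the left and right identity idempotents of Lemma~\ref{lem:lridem}, composition and inverses are inherited from $\star$, and the identity morphisms are the idempotents themselves. The only place you go beyond the paper's write-up is in explicitly proving the ``only if'' half of the composability criterion ($g \star h$ defined implies $g' \star g = h \star h'$, via the uniqueness clause of Lemma~\ref{lem:lridem}), a step the paper leaves implicit but which is genuinely needed for the claimed isomorphism of groupoids, since the partial compositions must agree exactly.
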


\begin{proof}
  Let us define a category $\mathcal{C}$ whose objects and morphisms are all
  elements of $G$. The objects are the idempotent elements of $G$. If $A, B$
  are objects, we define $\mathrm{Hom} (A, B)$ to be the set of $g \in G$ such
  that $A \star g = g$ and $g \star B = B$. By Lemma~\ref{lem:lridem},
  \[ G = \bigsqcup_{A, B \in \mathcal{C}} \mathrm{Hom} (A, B) . \]
  We must show that if $f \in \mathrm{Hom} (A, B)$ and $g \in \mathrm{Hom} (B,
  C)$ then $g \star f$ is defined and is in $\mathrm{Hom} (A, B)$. We can
  write $f = B \star f$ and $g = g \star B$, and then $g \star f = g \star B
  \star B \star f$ is defined since $B \star B$ is defined. It is clear that
  $C \star g \star f = g \star f$ and $g \star f \star A = g \star f$, so $g
  \star f \in \mathrm{Hom} (A, C)$.
  
  Now an idempotent $A$ is itself both an object of the category and a
  morphism in $\mathrm{Hom} (A, A)$; to distinguish this double role we denote
  it as $1_A$ when regarding it as a morphism. The category axioms are easily
  checked.
\end{proof}

If $G_i$ ($i \in I$) is a parametrized family of groupoids, then the disjoint
union $G = \sqcup G_i$ is naturally a groupoid. For example a disjoint union
of groups is a groupoid.

\begin{lemma}
  If $G$ is a groupoid, and if $A$ is an idempotent, then
  \[ \mathrm{Aut} (A) := \{g \in G|g \star A = A \star g = g\} \]
  is a group.
\end{lemma}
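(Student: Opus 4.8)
The plan is to check the group axioms directly, nominating the idempotent $A$ as the identity element and the groupoid inverse $g \mapsto g'$ as the group inverse. Associativity will be inherited for free from the groupoid Associative Law once every product of elements of $\mathrm{Aut}(A)$ is known to be defined, so the real content is threefold: closure (that $\star$ restricts to an everywhere-defined operation on $\mathrm{Aut}(A)$), the identity property of $A$, and the fact that $g'$ lies in $\mathrm{Aut}(A)$ and inverts $g$.

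For closure, suppose $g, h \in \mathrm{Aut}(A)$, so that $g \star A = g$ and $A \star h = h$; in particular $g \star A$ and $A \star h$ are both defined. The one subtlety is that the Associative Law is conditional, producing a defined triple product only from two already-defined adjacent products, so I would route everything through $A$: since $g \star A$ and $A \star h$ are defined, associativity makes $(g \star A)\star h$ defined, and this equals $g \star h$. That $g \star h$ again lies in $\mathrm{Aut}(A)$ then follows from two more applications of associativity, $(g \star h)\star A = g \star(h \star A) = g \star h$ and $A \star (g \star h) = (A \star g)\star h = g \star h$, using the defining relations of $h$ and $g$ respectively. The identity property is immediate: $A \in \mathrm{Aut}(A)$ because idempotence gives $A \star A = A$, and $A$ fixes every element of $\mathrm{Aut}(A)$ on both sides by definition.

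The inverse step is where the earlier structural results carry the argument, and I expect it to be the only part needing care. First I would show $g' \in \mathrm{Aut}(A)$ by taking groupoid inverses of the defining relations: Lemma~\ref{lem:gpoinvlem} turns $g \star A = g$ into $A' \star g' = g'$, and since every idempotent satisfies $A = A'$ this reads $A \star g' = g'$; symmetrically $A \star g = g$ yields $g' \star A = g'$. Finally, to see that $g$ and $g'$ are mutually inverse with unit $A$, I would appeal to Lemma~\ref{lem:lridem}: the unique right identity of $g$ is $g' \star g$ and its unique left identity is $g \star g'$, yet $A$ is already an idempotent serving as a two-sided identity for $g$, so uniqueness forces $g' \star g = A = g \star g'$. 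The crux of the whole proof is thus this combination of the inverse-reversal lemma with the uniqueness of the one-sided identities; the remaining verifications are direct unwindings of the definition.
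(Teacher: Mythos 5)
Your proof is correct and takes essentially the same approach as the paper: your closure argument (routing the product through $A$ so the conditional Associative Law applies) is precisely the paper's, which proves only closure and leaves the rest to the reader. You fill in those omitted details correctly, using Lemma~\ref{lem:gpoinvlem} together with $A = A'$ to place $g'$ in $\mathrm{Aut}(A)$, and the uniqueness in Lemma~\ref{lem:lridem} to force $g \star g' = g' \star g = A$.
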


\begin{proof}
  Let us check that $\mathrm{Aut} (A)$ is closed under $\star$. If $g, h \in
  \mathrm{Aut} (A)$, then $g \star A$ and $A \star h$ are defined so $g \star
  A \star h$ is defined. This equals $g \star A \star A \star h = g \star h$.
  We leave the remaining details to the reader.
\end{proof}

\section{Yang-Baxter equation for the six-vertex model}

The six-vertex model in statistical mechanics can be described algebraically
in terms of the matrices of weights for each vertex. See {\cite{BBF11}},
Section 1 for details. We study the Yang-Baxter equation for the matrices
which arise from the six-vertex model.

\begin{definition}
	A \textit{six-vertex matrix} $u \in \operatorname{GL}_4 (\mathbb{C})$ is a $4 \times 4$
	matrix of the form (\ref{eq:sixvertex})
  where $c_1 (u)$ and $c_2 (u)$ are nonzero. Let $S$ be the set of the
  six-vertex matrices. Let $S^{\times}$ be the set of invertible elements of
  $S$. Also let $S^{\bullet}$ be the subset of $S$ in which all six
  coefficients $a_i (u)$, $b_i (u)$, $c_i (u)$ are nonzero. Let $S^{\circ} =
  S^{\times} \cap S^{\bullet}$ be the subset of elements of $S^{\bullet}$ that
  are invertible, so that furthermore $c_1 (u) c_2 (u) - b_1 (u) b_2 (u) \neq
  0$.
\end{definition}

We will require a number of different subsets of $S$. These are all locally
closed in either the Zariski or complex topologies. These are summarized in
Table~\ref{tab:menagerie}.

\

\begin{table}[h]
	\[
  \begin{tabular}{|c|l|}
    \hline
    $S$ & $c_1, c_2 \neq 0$\\
    \hline
    $S^{\times}$ & $a_1, a_2, c_1, c_2, c_1 c_2 - b_1 b_2 \neq 0$\\
    \hline
    $S^{\bullet}$ & $a_1, a_2, b_1, b_2, c_1, c_2 \neq 0$\\
    \hline
    $S^{\circ}$ & $a_1, a_2, b_1, b_2, c_1, c_2, c_1 c_2 - b_1 b_2 \neq 0$\\
    \hline
    $S_{\operatorname{ff}}$ & $c_1, c_2 \neq 0$, $N (u) = 0$\\
    \hline
    $\Omega$ & $a_1, a_2, b_1, b_2, c_1, c_2, N (u) \neq 0$; \quad$(\Omega =
    \Omega^{\circ} \cup \Omega_B)$\\
    \hline
    $\Omega^{\circ}$ & $a_1, a_2, b_1, b_2, c_1, c_2, c_1 c_2 - b_1 b_2, N (u)
    \neq 0$\\
    \hline
    $\Omega_b$ & $a_1, a_2, c_1, c_2 \neq 0, b_1 = b_2 = 0, a_1 a_2 - c_1 c_2
    = 0$\\
    \hline
    $\Omega_a$ & $b_1, b_2, c_1, c_2 \neq 0, a_1 = a_2 = 0, b_1 b_2 - c_1 c_2
    = 0$\\
    \hline
    $\Omega_B$ & $a_1, a_2, b_1, b_2, c_1, c_2, N (u) \neq 0, c_1 c_2 - b_1
    b_2 = 0$\\
		\hline
    $\overline{\Omega}$ \rule{0pt}{12pt}
		& $\Omega^{\circ} \cup \Omega_b \cup \Omega_a \cup
    \Omega_B$\\
    \hline
	\end{tabular}\]
  \caption{\label{tab:menagerie}Various subsets of the six-vertices $u \in S$,
  including the constraints that define them. All sets require $c_1 = c_1 (u)$
  and $c_2 = c_2 (u)$ to be nonzero. The sets labeled $\Omega$ (with various
  decorations) are needed for the analysis of the non-free-fermionic part of
  the Yang-Baxter groupoid, although $\Omega_a$ and $\Omega_b$ are
  free-fermionic.}
\end{table}

We will denote by $B (u) = \left( \begin{smallmatrix}
  c_1 (u) & b_1 (u)\\
  b_2 (u) & c_2 (u)
\end{smallmatrix} \right)$ the middle $2 \times 2$ block. We will also denote
\[ N (u) = a_1 (u) a_2 (u) + b_1 (u) b_2 (u) - c_1 (u) c_2 (u) = a_1 (u) a_2
   (u) - \det (B (u)) . \]
Let $V \cong \mathbb{C}^2$ with the standard basis $e_1, e_2$. We view a
six-vertex matrix as a matrix of an operator $u \in \operatorname{End} (V \otimes V)$
in basis $e_1 \otimes e_1, e_1 \otimes e_2, e_2 \otimes e_1, e_2 \otimes e_2$.
By abuse of notation, we denote by $u$ both the six-vertex matrix and the
corresponding operator.

If $a_1 (u)$ and $a_2 (u)$ are nonzero we define
\[ u^{\ast} = \left( \begin{array}{cccc}
     a_1 (u^{\ast}) &  &  & \\
     & c_1^{\ast} (u) & b_1^{\ast} (u) & \\
     & b_2^{\ast} (u) & c_2^{\ast} (u) & \\
     &  &  & a_2^{\ast} (u)
   \end{array} \right) = \left( \begin{array}{cccc}
     a_1 (u^{\ast}) &  &  & \\
     & c_2 (u) & - b_1 (u) & \\
     & - b_2 (u) & c_1 (u) & \\
     &  &  & a_2(u^{\ast})
   \end{array} \right), \]
where $a_1 (u^{\ast})$ and $a_2 (u^{\ast})$ are defined by
\[ \label{eq:austardef} a_1 (u^{\ast}) = \frac{c_1 (u) c_2 (u) - b_1 (u) b_2
   (u)}{a_1 (u)}, \qquad a_2 (u^{\ast}) = \frac{c_1 (u) c_2 (u) - b_1 (u) b_2
   (u)}{a_2 (u)} . \]
If furthermore $u$ is invertible (or equivalently if $\det (B (u)) \neq 0$)
then $u^{\ast} = \det (B (u)) u^{- 1}$. A computation shows that
\begin{equation}
  \label{eq:nustar} N (u^{\ast}) = - \frac{\det (B (u))}{a_1 (u) a_2 (u)} N
  (u) .
\end{equation}
If $u \in S^{\bullet}$ define
\begin{equation}
  \label{eq:defdel1} \Delta_1 (u) = \frac{N (u)}{a_1 (u) b_1 (u)} = \frac{a_2
  (u) - a_1^{\ast} (u)}{b_1 (u)},
\end{equation}
\begin{equation}
  \label{eq:defdel2} \Delta_2 (u) = \frac{N (u)}{a_2 (u) b_2 (u)} = \frac{a_1
  (u) - a_2^{\ast} (u)}{b_2 (u)} .
\end{equation}
Also define $\Delta : S^{\bullet} \longrightarrow \mathbb{C}^2$ by $\Delta (u)
= (\Delta_1 (u), \Delta_2 (u))$. If $u \in S^{\circ}$, then $u^{\ast} \in
S^{\bullet}$, so $\Delta (u^{\ast})$ is defined, and it follows from
(\ref{eq:nustar}) that
\begin{equation}
  \label{eq:delustar} \Delta_1 (u^{\ast}) = \frac{N (u)}{a_2 (u) b_1 (u)},
  \qquad \Delta_2 (u^{\ast}) = \frac{N (u)}{a_1 (u) b_2 (u)} .
\end{equation}
The functions $\Delta_1 (u), \Delta_2 (u), \Delta_1 (u^{\ast})$ and $\Delta_2
(u^{\ast})$ are thus regular on $S^{\bullet}$.

Let $V \cong \mathbb{C}^2$. For $u, w, v \in \operatorname{End} (V \otimes V)$, define
the \textit{Yang-Baxter commutator} on $V \otimes V \otimes V$:
\begin{equation}
  \label{eq:ybcommutator} \llbracket u, w, v \rrbracket = (u \otimes 1)  (1
  \otimes w)  (v \otimes 1) - (1 \otimes v)  (w \otimes 1)  (1 \otimes u) .
\end{equation}
Then the \textit{Yang-Baxter equation} is the identity
\begin{equation}
  \label{eq:ybe} \llbracket u, w, v \rrbracket = 0, \quad u, w, v \in
  \operatorname{End} (V \otimes V) .
\end{equation}
We call a solution $(u, w, v)$ to (\ref{eq:ybe}) \textit{normalized} if
\begin{equation}
  \label{eq:normalcond} c_1 (w) = c_1 (u)\, c_1 (v), \qquad c_2 (w) = c_2 (u)\, c_2 (v) .
\end{equation}

\begin{theorem}
  \label{thm:wybe}Assume that $u, v$ are six-vertex matrices with $a_1 (u),
  a_2 (u), a_1 (v), a_2 (v)$ all nonzero so that $u^{\ast}$ and $v^{\ast}$ are
  defined. A necessary and sufficient condition that there exists a six-vertex
  matrix $w$ with $\llbracket u, w, v \rrbracket = 0$ is that:
  \begin{equation}
    \label{eq:wcond} \frac{N (u) b_1 (v)}{a_1 (u)} = \frac{N (v) b_1 (u)}{a_2
    (v)}, \qquad \qquad \frac{N (u) b_2 (v)}{a_2 (u)} = \frac{N (v) b_2
    (u)}{a_1 (v)} .
  \end{equation}
  The solution $w$ is unique up to scalar multiple, and there is a unique
  normalized solution determined by (\ref{eq:normalcond}) and
  \begin{equation}
    \label{eq:wbcdef} \begin{array}{l}
      a_1 (w) = a_1 (u) a_1 (v) - b_2 (u) b_1 (v), \qquad a_2 (w) = a_2 (u)
      a_2 (v) - b_1 (u) b_2 (v),\\
      b_1 (w) = a_1 (u^{\ast}) b_1 (v) + b_1 (u) a_1 (v) = b_1 (u) a_2
      (v^{\ast}) + a_2 (u) b_1 (v),\\
      b_2 (w) = a_2 (u^{\ast}) b_2 (v) + b_2 (u) a_2 (v) = b_2 (u) a_1
      (v^{\ast}) + a_1 (u) b_2 (v) .
    \end{array}
  \end{equation}
  The equivalence of the alternative expressions for $b_1 (w)$ and $b_2 (w)$
  follows from~(\ref{eq:wcond}).
\end{theorem}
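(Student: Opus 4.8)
The plan is to expand the Yang-Baxter commutator $\llbracket u, w, v\rrbracket$ directly as an operator on $V\otimes V\otimes V$ and exploit the conservation law obeyed by six-vertex matrices. Each of $u,v,w$ preserves the number of tensor factors equal to $e_2$, so every factor appearing in \eqref{eq:ybcommutator}, namely $u\otimes 1$, $1\otimes w$, $v\otimes 1$ and their mirror images, preserves the grading of $V\otimes V\otimes V$ by this weight. Hence $\llbracket u, w, v\rrbracket$ is block diagonal with blocks of dimensions $1,3,3,1$ for weights $0,1,2,3$. First I would dispose of the two extreme blocks: on $e_1\otimes e_1\otimes e_1$ both terms of the commutator act by $a_1(u)a_1(v)a_1(w)$, and on $e_2\otimes e_2\otimes e_2$ both act by $a_2(u)a_2(v)a_2(w)$, so these blocks vanish identically and the entire content sits in the two middle $3\times 3$ blocks.

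Next I would write out the weight-$1$ block in the ordered basis $e_2\otimes e_1\otimes e_1,\ e_1\otimes e_2\otimes e_1,\ e_1\otimes e_1\otimes e_2$. Each factor restricts to an explicit $3\times 3$ matrix of Boltzmann weights, and multiplying out $M_L=(u\otimes 1)(1\otimes w)(v\otimes 1)$ and $M_R=(1\otimes v)(w\otimes 1)(1\otimes u)$ gives an equation $M_L=M_R$ that is linear and homogeneous in the entries of $w$. Reading off the nine scalar equations: the corner off-diagonal entries $(1,3)$ and $(3,1)$ are automatic identities; the three diagonal entries force the ratio $c_1(w):c_2(w)=c_1(u)c_1(v):c_2(u)c_2(v)$ and, after imposing the normalization \eqref{eq:normalcond}, yield $a_1(w)=a_1(u)a_1(v)-b_2(u)b_1(v)$; the entries $(2,1)$ and $(3,2)$ both collapse (dividing by the nonzero weights and using $a_1(u)\neq 0$) to the single formula $b_1(w)=a_1(u^{*})b_1(v)+b_1(u)a_1(v)$, while $(1,2)$ and $(2,3)$ both give $b_2(w)=b_2(u)a_1(v^{*})+a_1(u)b_2(v)$. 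Note that the weight-$1$ block does not involve $a_2(w)$.

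Rather than recompute the weight-$2$ block, I would invoke the symmetry $e_1\leftrightarrow e_2$: conjugation by $F^{\otimes 3}$, where $F$ is the swap of the basis of $V$, carries $\llbracket u,w,v\rrbracket$ to $\llbracket\tilde u,\tilde w,\tilde v\rrbracket$ with all indices $1,2$ interchanged and swaps the weight-$1$ and weight-$2$ blocks. Applying the index swap to the weight-$1$ formulas gives $a_2(w)=a_2(u)a_2(v)-b_1(u)b_2(v)$ together with the companion expressions $b_1(w)=b_1(u)a_2(v^{*})+a_2(u)b_1(v)$ and $b_2(w)=a_2(u^{*})b_2(v)+b_2(u)a_2(v)$. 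The crux is then that the weight-$1$ and weight-$2$ values of $b_1(w)$ must coincide, and likewise for $b_2(w)$. Using the identities $a_1(x^{*})-a_2(x)=-N(x)/a_1(x)$ and $a_2(x^{*})-a_1(x)=-N(x)/a_2(x)$, which are immediate from the right-hand expressions in \eqref{eq:defdel1} and \eqref{eq:defdel2}, equating the two forms of $b_1(w)$ collapses to the first relation in \eqref{eq:wcond} and equating the two forms of $b_2(w)$ gives the second.

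Finally I would assemble both directions. For sufficiency, assuming \eqref{eq:wcond} the formulas \eqref{eq:wbcdef} are well defined and the paired expressions for $b_1(w),b_2(w)$ agree, so substituting back verifies $M_L=M_R$ on both nontrivial blocks; since $c_1(w)=c_1(u)c_1(v)$ and $c_2(w)=c_2(u)c_2(v)$ are nonzero, $w$ is a genuine six-vertex matrix. For necessity, any six-vertex solution has $c_1(w),c_2(w)\neq 0$, so the ratio relation permits rescaling to the normalization (which does not affect the Yang-Baxter equation); the block equations then determine $a_i(w),b_i(w)$ uniquely through the two families of formulas, and their compatibility is precisely \eqref{eq:wcond}. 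The same determination shows the solution space is one-dimensional, giving uniqueness up to scalar. I expect the main obstacle to be organizational rather than conceptual: keeping the nine-entry bookkeeping of the two $3\times 3$ blocks straight, confirming that the redundant entries genuinely collapse to the stated formulas, and correctly deploying the short identities from \eqref{eq:defdel1}--\eqref{eq:defdel2} that convert the consistency requirements into the form \eqref{eq:wcond}.
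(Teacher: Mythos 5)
Your proposal is correct, and every checkpoint in it verifies: the two extreme weight blocks do cancel identically, the corner entries of each $3\times 3$ block vanish automatically (accounting for the paper's count of $14$ nontrivial equations out of $18$ block entries), the $(2,2)$ diagonal entry of each middle block reproduces the duplicated relation \eqref{eq:duplicat}, and after normalization the remaining entries collapse exactly to the formulas in \eqref{eq:wbcdef}, with the compatibility of the two determinations of $b_1(w)$ and $b_2(w)$ reducing via $a_1(x^{\ast})-a_2(x)=-N(x)/a_1(x)$ and $a_2(x^{\ast})-a_1(x)=-N(x)/a_2(x)$ to \eqref{eq:wcond}. The underlying computation is the same as the paper's --- the paper simply lists the twelve equations \eqref{eq:thetwelve} plus \eqref{eq:duplicat}, normalizes, divides out the $c$-factors so the equations pair up into six, solves for $a_1(w),a_2(w)$, and extracts \eqref{eq:wcond} as the consistency condition --- but your organization is genuinely different and more structured: you exploit the conservation-of-charge grading to decompose the commutator into blocks, and the $e_1\leftrightarrow e_2$ flip symmetry (conjugation by $F^{\otimes 3}$) to compute only the weight-$1$ block explicitly. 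This halves the explicit bookkeeping and, more usefully, \emph{explains} features the paper obtains by inspection: why four of the eighteen entries are identically zero, why the $c$-relation appears twice, why each block involves only $a_1$'s or only $a_2$'s, and why the two alternative expressions for each of $b_1(w)$ and $b_2(w)$ in \eqref{eq:wbcdef} arise naturally, one from each block, so that \eqref{eq:wcond} is precisely the matching condition between the two blocks. What the paper's version buys in exchange is that its six post-normalization equations are written out in full, which it then reuses verbatim in the second proof of Proposition~\ref{prop:odelto} (solving for $u$ in terms of $w,v$), whereas your symmetry shortcut leaves half the explicit equations unrecorded. One pedantic note: in your collapse of entries $(1,2)$ and $(2,3)$ to the weight-$1$ formula for $b_2(w)$, the division uses $a_1(v)\neq 0$ rather than $a_1(u)\neq 0$; all four nonvanishing hypotheses $a_1(u),a_2(u),a_1(v),a_2(v)\neq 0$ are needed across the two blocks, exactly as the theorem assumes.
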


\begin{proof}
  There are 14 equations that must be satisfied for the matrix $\llbracket u,
  w, v \rrbracket$ to vanish. However one equation is duplicated. The
  duplicated equation is
  \begin{equation}
    \label{eq:duplicat} c_1 (u) c_2 (w) c_1 (v) = c_2 (u) c_1 (w) c_2 (v)
  \end{equation}
  The remaining twelve equations are:
  \begin{equation}
    \label{eq:thetwelve} \begin{array}{rcl}
      c_1 (u) a_1 (w) b_2 (v) + b_2 (u) c_1 (w) c_2 (v) & = & c_1 (u) b_2 (w)
      a_1 (v)\\
      b_2 (u) c_1 (w) b_1 (v) + c_1 (u) a_1 (w) c_1 (v) & = & a_1 (u) c_1 (w)
      a_1 (v)\\
      c_2 (u) a_1 (w) b_2 (v) + b_2 (u) c_2 (w) c_1 (v) & = & c_2 (u) b_2 (w)
      a_1 (v)\\
      c_2 (u) c_1 (w) b_1 (v) + b_1 (u) a_1 (w) c_1 (v) & = & a_1 (u) b_1 (w)
      c_1 (v)\\
      c_2 (u) c_1 (w) b_2 (v) + b_2 (u) a_2 (w) c_1 (v) & = & a_2 (u) b_2 (w)
      c_1 (v)\\
      b_1 (u) c_1 (w) b_2 (v) + c_1 (u) a_2 (w) c_1 (v) & = & a_2 (u) c_1 (w)
      a_2 (v)\\
      b_2 (u) c_2 (w) b_1 (v) + c_2 (u) a_1 (w) c_2 (v) & = & a_1 (u) c_2 (w)
      a_1 (v)\\
      c_1 (u) c_2 (w) b_1 (v) + b_1 (u) a_1 (w) c_2 (v) & = & a_1 (u) b_1 (w)
      c_2 (v)\\
      c_1 (u) c_2 (w) b_2 (v) + b_2 (u) a_2 (w) c_2 (v) & = & a_2 (u) b_2 (w)
      c_2 (v)\\
      c_1 (u) a_2 (w) b_1 (v) + b_1 (u) c_1 (w) c_2 (v) & = & c_1 (u) b_1 (w)
      a_2 (v)\\
      b_1 (u) c_2 (w) b_2 (v) + c_2 (u) a_2 (w) c_2 (v) & = & a_2 (u) c_2 (w)
      a_2 (v)\\
      c_2 (u) a_2 (w) b_1 (v) + b_1 (u) c_2 (w) c_1 (v) & = & c_2 (u) b_1 (w)
      a_2 (v)
    \end{array}
  \end{equation}
  Equation (\ref{eq:duplicat}) implies that if a solution $w$ exists, a unique
  constant multiple is normalized. Therefore we may impose the condition
  (\ref{eq:normalcond}). Substituting these values for $c_1 (w)$ and $c_2
  (w)$, each of the twelve equations is now divisible by one of $c_1 (u), c_2
  (u), c_1 (v)$ or $c_2 (v)$, and on dividing these away, each of the twelve
  equations occurs twice and there are only six unique equations to be
  satisfied. These are

  \[ \begin{array}{rcc}
       b_2 (u) c_1 (v) c_2 (v) - b_2 (w) a_1 (v) + a_1 (w) b_2 (v) & = & 0\\
       - a_1 (u) a_1 (v) + b_2 (u) b_1 (v) + a_1 (w) & = & 0\\
       c_1 (u) c_2 (u) b_1 (v) + b_1 (u) a_1 (w) - a_1 (u) b_1 (w) & = & 0\\
       c_1 (u) c_2 (u) b_2 (v) + b_2 (u) a_2 (w) - a_2 (u) b_2 (w) & = & 0\\
       - a_2 (u) a_2 (v) + b_1 (u) b_2 (v) + a_2 (w) & = & 0\\
       b_1 (u) c_1 (v) c_2 (v) - b_1 (w) a_2 (v) + a_2 (w) b_1 (v) & = & 0
     \end{array} \]
  From the second and fifth equation we see that $a_1 (w)$ and $a_2 (w)$ must
  have the values in (\ref{eq:wbcdef}). Substituting these values in the
  remaining four equations we obtain
  \[ \begin{array}{rcc}
       a_1 (u) a_1 (v) b_2 (v) - b_2 (u) b_1 (v) b_2 (v) + b_2 (u) c_1 (v) c_2
       (v) - b_2 (w) a_1 (v) & = & 0\\
       a_1 (u) b_1 (u) a_1 (v) - b_1 (u) b_2 (u) b_1 (v) + c_1 (u) c_2 (u) b_1
       (v) - a_1 (u) b_1 (w) & = & 0\\
       a_2 (u) b_2 (u) a_2 (v) - b_1 (u) b_2 (u) b_2 (v) + c_1 (u) c_2 (u) b_2
       (v) - a_2 (u) b_2 (w) & = & 0\\
       a_2 (u) a_2 (v) b_1 (v) - b_1 (u) b_1 (v) b_2 (v) + b_1 (u) c_1 (v) c_2
       (v) - b_1 (w) a_2 (v) & = & 0
     \end{array} \]
  The second and fourth are each equivalent one of the two expressions for
  $b_1 (w)$ in (\ref{eq:wbcdef}), and similarly the first and third equations
  are each equivalent to one of the two expressions for $b_2 (w)$. The two
  expressions for $b_1 (w)$ are equivalent if and only if (\ref{eq:wcond}) is
  satisfied, and similarly for $b_2 (w)$. We have proved that a solution
  exists if and only if (\ref{eq:wcond}) is satisfied, and if so, there is a
  unique normalized solution.
\end{proof}

\begin{remark}
  Conditions (\ref{eq:wcond}) can also be written
  \begin{equation}
    \begin{array}{c}
      (a_1 (u^{\ast}) - a_2 (u)) b_1 (v) = (a_2 (v^{\ast}) - a_1 (v)) b_1 (u)
      = 0,\\
      (a_2 (u^{\ast}) - a_1 (u)) b_2 (v) = (a_1 (v^{\ast}) - a_2 (v)) b_2 (u)
      = 0.
    \end{array}
  \end{equation}
\end{remark}

\begin{corollary}
  \label{cor:conddel}If $u, v \in S^{\bullet}$ a necessary and sufficient
  condition that there exists $w \in S$ such that $\llbracket u, w, v
  \rrbracket = 0$ is that $\Delta (u) = \Delta (v^{\ast})$. If this is
  satisfied, then $w$ is determined up to constant multiple, and may be
  normalized as in (\ref{eq:wbcdef}).
\end{corollary}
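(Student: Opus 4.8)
The plan is to deduce the statement directly from Theorem~\ref{thm:wybe} by rewriting its conditions (\ref{eq:wcond}) in terms of the object map $\Delta$. Since $u,v\in S^{\bullet}$, all six Boltzmann weights of each matrix are nonzero; in particular $a_1(u),a_2(u),a_1(v),a_2(v)$ are nonzero, so the hypotheses of Theorem~\ref{thm:wybe} are satisfied and $u^{\ast},v^{\ast}$ are defined. Moreover $\Delta(u)$ is defined, and since $v^{\ast}\in S^{\bullet}$ the value $\Delta(v^{\ast})$ is defined as well, with components given by (\ref{eq:delustar}) applied to $v$, namely $\Delta_1(v^{\ast})=N(v)/(a_2(v)b_1(v))$ and $\Delta_2(v^{\ast})=N(v)/(a_1(v)b_2(v))$.

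The key step is a short cross-multiplication. Using $\Delta_1(u)=N(u)/(a_1(u)b_1(u))$ together with the expression for $\Delta_1(v^{\ast})$ above, the equality $\Delta_1(u)=\Delta_1(v^{\ast})$ is equivalent, after clearing the nonzero denominators, to $N(u)\,a_2(v)\,b_1(v)=N(v)\,a_1(u)\,b_1(u)$, which is precisely the first relation in (\ref{eq:wcond}) after cross-multiplying. In the same way, $\Delta_2(u)=N(u)/(a_2(u)b_2(u))$ and the formula for $\Delta_2(v^{\ast})$ show that $\Delta_2(u)=\Delta_2(v^{\ast})$ is equivalent to $N(u)\,a_1(v)\,b_2(v)=N(v)\,a_2(u)\,b_2(u)$, which is the second relation in (\ref{eq:wcond}). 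Hence (\ref{eq:wcond}) holds if and only if $\Delta(u)=\Delta(v^{\ast})$, and by Theorem~\ref{thm:wybe} this is the necessary and sufficient condition for the existence of $w$.

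The remaining claims transfer verbatim from Theorem~\ref{thm:wybe}: the solution $w$ is unique up to scalar multiple and is given in the normalized form (\ref{eq:wbcdef}) together with (\ref{eq:normalcond}). Finally, because $u,v\in S^{\bullet}$ forces $c_1(u),c_2(u),c_1(v),c_2(v)$ to be nonzero, the normalization $c_1(w)=c_1(u)c_1(v)$, $c_2(w)=c_2(u)c_2(v)$ yields $c_1(w),c_2(w)\neq 0$, so $w$ is genuinely an element of $S$. I expect no serious obstacle in this argument; the only point requiring care is to verify that every factor one divides by is nonzero on $S^{\bullet}$, so that passing between (\ref{eq:wcond}) and the $\Delta$-equality is a genuine equivalence rather than a one-sided implication.
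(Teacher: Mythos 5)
Your proof is correct and takes essentially the same route as the paper: the paper's proof of Corollary~\ref{cor:conddel} is exactly the observation that, once $b_1(u),b_2(u),b_1(v),b_2(v)$ are nonzero, cross-multiplying the two equations in (\ref{eq:wcond}) turns them into $\Delta_1(u)=\Delta_1(v^{\ast})$ and $\Delta_2(u)=\Delta_2(v^{\ast})$ via (\ref{eq:delustar}), with the uniqueness and normalization statements imported verbatim from Theorem~\ref{thm:wybe}. One small inaccuracy worth noting: your justification that $\Delta(v^{\ast})$ is defined ``since $v^{\ast}\in S^{\bullet}$'' is false in general---$S^{\bullet}$ is not closed under $u\mapsto u^{\ast}$ (if $\det B(v)=0$ then $a_1(v^{\ast})=a_2(v^{\ast})=0$)---but this is harmless here, because the paper defines $\Delta(v^{\ast})$ on all of $S^{\bullet}$ by the regular formulas (\ref{eq:delustar}), which is precisely what your cross-multiplication uses.
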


\begin{proof}
  In Theorem~\ref{thm:wybe}, we only assumed that $a_1 (u)$, $a_2 (u)$, $a_1
  (v)$ and $a_2 (v)$ are nonzero. If $u, v \in S^{\bullet}$, that is, if $b_1
  (u), b_2 (u), b_1 (v)$ and $b_2 (v)$ are nonzero, then the two equations in
  (\ref{eq:wcond}) are equivalent to $\Delta_1 (u) = \Delta_1 (v^{\ast})$ and
  $\Delta_2 (u) = \Delta_2 (u^{\ast})$.
\end{proof}

Observe that if $u, v \in S^{\bullet}$ satisfy $\Delta (u) = \Delta
(v^{\ast})$, then Theorem~\ref{thm:wybe} guarantees that there is $w \in S$
such that $\llbracket u, w, v \rrbracket = 0$, but it does not guarantee that
$w \in S^{\bullet}$. Also $S^{\bullet}$ is not closed under the map $u \mapsto
u^{\ast}$. However the set $S^{\circ}$ of invertible elements in
$S^{\bullet}$ is closed under $u \mapsto u^{\ast}$. The set $S^{\circ}$
is open in $S$, so conclusions we draw in this case hold ``generically.''

\begin{lemma}
  \label{lem:sixcases}Suppose that $u, v, w \in S^{\times}$ satisfy
  $\llbracket u, w, v \rrbracket = 0$. Then
  \[ \llbracket u, w, v \rrbracket = \llbracket u^{\ast}, v, w \rrbracket =
     \llbracket w, u, v^{\ast} \rrbracket = \llbracket v, u^{\ast}, w^{\ast}
     \rrbracket = \llbracket w^{\ast}, v^{\ast}, u \rrbracket = \llbracket
     v^{\ast}, w^{\ast}, u^{\ast} \rrbracket = 0. \]
\end{lemma}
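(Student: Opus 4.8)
The plan is to reduce everything to a statement about matrix inverses and then exploit a small symmetry group acting on the Yang--Baxter commutator. First I would observe that $\llbracket\cdot,\cdot,\cdot\rrbracket$ is trilinear: scaling any one of its three arguments scales the whole expression. Since $u,v,w\in S^{\times}$, each is invertible with $\det(B(\cdot))=c_1c_2-b_1b_2\neq 0$, so by the identity $x^{\ast}=\det(B(x))\,x^{-1}$ (noted just before the lemma) each starred matrix in the statement is a nonzero scalar multiple of the corresponding inverse. Consequently, for the purpose of \emph{vanishing}, every $\ast$ may be replaced by $-1$, and it suffices to prove the six identities with $u^{\ast},v^{\ast},w^{\ast}$ replaced by $u^{-1},v^{-1},w^{-1}$.

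Next I would rewrite the commutator using the shorthand $X_{12}=X\otimes 1$ and $X_{23}=1\otimes X$, so that $\llbracket A,B,C\rrbracket = A_{12}B_{23}C_{12}-C_{23}B_{12}A_{23}$ and the hypothesis $\llbracket u,w,v\rrbracket=0$ becomes the operator identity
\[
u_{12}\,w_{23}\,v_{12} = v_{23}\,w_{12}\,u_{23}.
\]
The heart of the proof is two elementary ``moves.'' Left-multiplying this identity by $(u^{-1})_{12}$ and right-multiplying by $(u^{-1})_{23}$, and using that $X_{12}$ and $X_{23}$ act on disjoint pairs of tensor factors, cancels the outer $u$'s and yields exactly $\llbracket u^{-1},v,w\rrbracket=0$. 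Symmetrically, left-multiplying by $(v^{-1})_{23}$ and right-multiplying by $(v^{-1})_{12}$ yields $\llbracket w,u,v^{-1}\rrbracket=0$. Abstractly these are the involutions $\sigma:(A,B,C)\mapsto(A^{-1},C,B)$ and $\tau:(A,B,C)\mapsto(B,A,C^{-1})$ on ordered triples, each of which preserves the vanishing of the commutator.

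Finally I would check that $\sigma$ and $\tau$ generate a copy of $S_3$ (one verifies $\sigma^2=\tau^2=(\sigma\tau)^3=\mathrm{id}$ directly) and then record the orbit of $(u,w,v)$. The six group elements produce, in turn, $(u,w,v)$, $(u^{-1},v,w)$, $(w,u,v^{-1})$, $(v,u^{-1},w^{-1})$, $(w^{-1},v^{-1},u)$ and $(v^{-1},w^{-1},u^{-1})$, which after restoring the $\ast$'s are precisely the six triples in the statement. As a consistency check, the last triple is also obtained in one step by inverting both sides of the displayed identity. There is no serious analytic or algebraic obstacle here; the only real care needed is bookkeeping, namely tracking the order of the three arguments and which ones are inverted as the moves are composed, and confirming at the outset that membership in $S^{\times}$ legitimately licenses the replacement of $\ast$ by $-1$. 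The content of the lemma is the recognition that the two one-line multiplication identities generate all six symmetries.
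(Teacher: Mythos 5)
Your proof is correct and follows essentially the same route as the paper: cancel the outer factors of the identity $u_{12}w_{23}v_{12}=v_{23}w_{12}u_{23}$ against $(u^{-1})_{12},(u^{-1})_{23}$ (resp.\ $(v^{-1})_{23},(v^{-1})_{12}$), rescale by $\det(B(\cdot))$ to trade $-1$ for $\ast$, and iterate the two moves $(u,w,v)\mapsto(u^{\ast},v,w)$ and $(u,w,v)\mapsto(w,u,v^{\ast})$ to reach all six triples. Your only addition is making the $S_3$ structure of these two involutions explicit, which is a tidy but inessential elaboration of the paper's closing sentence.
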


\begin{proof}
  Note that $u, v$ and $w$ are invertible $S^{\times}$. In the identity
  \[ (u \otimes I)  (I \otimes w)  (v \otimes I) - (I \otimes v)  (w \otimes
     I)  (I \otimes u) = \llbracket u, w, v \rrbracket = 0, \]
  multiplying on the left by $u^{- 1} \otimes I$ and on the right by $I
  \otimes u^{- 1}$ gives $\llbracket u^{- 1}, w, v \rrbracket = 0$. Then
  multiplying by $\det (B (u))$ gives $\llbracket u^{\ast}, w, v \rrbracket =
  0$. The identity $\llbracket w, u, v^{\ast} \rrbracket = 0$ is proved
  similarly. Applying the operations $(u, w, v) \mapsto (u^{\ast}, v, w)$ and
  $(u, w, v) \mapsto (w, u, v^{\ast})$ gives the six identities.
\end{proof}

In the following Proposition, $\Delta_0(u):=\Delta_1(u)\Delta_2(u)$.
It follows by comparing (\ref{eq:defdel1}) and (\ref{eq:defdel2}) with
(\ref{eq:delustar}) that
\begin{equation}\label{eq:deloost}\Delta_0(u)=\Delta_0(u^\ast).
\end{equation}

\begin{proposition}
  \label{prop:threedel}If $u, w, v \in S^{\circ}$ satisfy $\llbracket u, w, v
  \rrbracket = 0$, then
  \[ \Delta (u) = \Delta (w), \qquad \Delta (u) = \Delta (v^{\ast}), \qquad
     \Delta (w^{\ast}) = \Delta (u^{\ast}) . \]
  Moreover
  \[\Delta_0(u)=\Delta_0(v)=\Delta_0(w).\]
\end{proposition}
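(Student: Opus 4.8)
The plan is to read the three displayed $\Delta$-identities off of Corollary~\ref{cor:conddel}, applied not just to the hypothesis but to all six vanishing commutators furnished by Lemma~\ref{lem:sixcases}, and then to obtain the $\Delta_0$-equalities from these together with the involution relation~(\ref{eq:deloost}). The preliminary point is to check that this machinery applies uniformly. Since $u,v,w\in S^{\circ}$, each has all six Boltzmann weights nonzero and $\det B\neq 0$, so the $\ast$-operation is defined on each. Moreover, passing to $x^{\ast}$ swaps $c_1,c_2$ and negates $b_1,b_2$, so $\det B(x^{\ast})=c_1(x)c_2(x)-b_1(x)b_2(x)=\det B(x)$; hence $a_i(x^{\ast\ast})=\det B(x)/a_i(x^{\ast})=a_i(x)$, giving $x^{\ast\ast}=x$ and showing that $S^{\circ}$ is closed under $x\mapsto x^{\ast}$. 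In particular $u^{\ast},v^{\ast},w^{\ast}\in S^{\circ}\subseteq S^{\bullet}$, so every commutator in Lemma~\ref{lem:sixcases} has both of its outer factors in $S^{\bullet}$ with $\ast$ defined, and Corollary~\ref{cor:conddel} may be applied to each.

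Next I would extract the identities. Corollary~\ref{cor:conddel} constrains the two \emph{outer} factors of a vanishing commutator $\llbracket A,M,B\rrbracket=0$ via $\Delta(A)=\Delta(B^{\ast})$. Applied to the hypothesis $\llbracket u,w,v\rrbracket=0$ this gives immediately $\Delta(u)=\Delta(v^{\ast})$, the second displayed identity. Applied to the form $\llbracket u^{\ast},v,w\rrbracket=0$ of Lemma~\ref{lem:sixcases}, whose outer factors are $u^{\ast}$ and $w$, it gives $\Delta(u^{\ast})=\Delta(w^{\ast})$, the third displayed identity. The first displayed identity $\Delta(u)=\Delta(w)$ is produced in the same way from the forms carrying $w$ or $w^{\ast}$ as an outer factor (for instance $\llbracket w,u,v^{\ast}\rrbracket=0$ and $\llbracket v,u^{\ast},w^{\ast}\rrbracket=0$), the raw conditions being simplified using $x^{\ast\ast}=x$ and combined with the identity $\Delta(u)=\Delta(v^{\ast})$ already obtained. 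I would record this as a short table pairing each of the six commutators with the pairwise $\Delta$-equality it yields, so that the displayed identities appear as its distinct entries.

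Finally I would deduce the $\Delta_0$-statement purely formally. By~(\ref{eq:deloost}) one has $\Delta_0(x)=\Delta_0(x^{\ast})$ whenever both sides are defined, which is guaranteed here by the closure of $S^{\circ}$ under $\ast$. Combining this with $\Delta(u)=\Delta(v^{\ast})$ gives $\Delta_0(u)=\Delta_0(v^{\ast})=\Delta_0(v)$, and combining it with $\Delta(w^{\ast})=\Delta(u^{\ast})$ gives $\Delta_0(w)=\Delta_0(w^{\ast})=\Delta_0(u^{\ast})=\Delta_0(u)$. Chaining these yields $\Delta_0(u)=\Delta_0(v)=\Delta_0(w)$, as required.

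I expect the main obstacle to be the bookkeeping in the middle step: one must consistently remember that Corollary~\ref{cor:conddel} constrains the outer factors of a commutator rather than its middle entry, that its condition applies the $\ast$-operation to the third factor, and that $x^{\ast\ast}=x$ is needed to collapse the six raw conditions to their distinct forms. A subtler point, easy to overlook, is that it is precisely the applicability of the Corollary to the \emph{starred} commutators that forces the hypothesis $u,v,w\in S^{\circ}$ rather than merely $S^{\bullet}$: closure under $\ast$ together with the involution property is what keeps all six factors inside $S^{\bullet}$, the domain on which $\Delta$ is regular, including at all of $u^{\ast},v^{\ast},w^{\ast}$.
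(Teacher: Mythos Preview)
Your overall strategy---apply Corollary~\ref{cor:conddel} to each of the six vanishing commutators of Lemma~\ref{lem:sixcases}, then use~(\ref{eq:deloost}) for the $\Delta_0$ part---is exactly the paper's approach. Your care in checking that $S^{\circ}$ is closed under $\ast$ and that $x^{\ast\ast}=x$ is a useful addition the paper leaves implicit.

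There is, however, a genuine gap at the first displayed identity $\Delta(u)=\Delta(w)$. You handwave this step (``combined with the identity $\Delta(u)=\Delta(v^{\ast})$ already obtained''), but if you actually carry it out you will find it does not go through. Running Corollary~\ref{cor:conddel} on all six commutators yields exactly three distinct relations: $\Delta(u)=\Delta(v^{\ast})$, $\Delta(w)=\Delta(v)$, and $\Delta(w^{\ast})=\Delta(u^{\ast})$. In particular $\llbracket w,u,v^{\ast}\rrbracket=0$ gives $\Delta(w)=\Delta((v^{\ast})^{\ast})=\Delta(v)$, not anything relating $\Delta(w)$ to $\Delta(u)$. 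The identity $\Delta(u)=\Delta(w)$ is \emph{not} a consequence: from~(\ref{eq:defdel1}) and~(\ref{eq:delustar}) one has $\Delta_1(v)/\Delta_1(v^{\ast})=a_2(v)/a_1(v)$, so $\Delta(u)=\Delta(v^{\ast})$ and $\Delta(w)=\Delta(v)$ agree only when $a_1(v)=a_2(v)$.

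This is in fact a typo in the statement of the proposition. Comparing with~(\ref{eq:deltatr}) in the introduction and with how the proposition is invoked in the proof of Proposition~\ref{prop:gpogenasso} (where it is used to conclude $\Delta(s)=\Delta(t)$ from $\llbracket r,s,t\rrbracket=0$), the intended first identity is $\Delta(w)=\Delta(v)$. Your argument proves that correctly; you should flag the discrepancy rather than paper over it.
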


\begin{proof}
  The first assertion follows from Theorem~\ref{thm:wybe} and Lemma~\ref{lem:sixcases}.
  The second fact follow from (\ref{eq:deloost}).
\end{proof}

If $u \in S$ and $N (u) = 0$, the vertex $u$ is called
{\textit{free-fermionic}}. Let $S_{\operatorname{ff}}$ be the set of all free-fermionic
matrices. Observe that if $u \in S_{\operatorname{ff}}$ then $a_1 (u) a_2 (u) + b_1
(u) b_2 (u)$ is automatically nonzero, since it equals $c_1 (u) c_2 (u) \neq
0$.

\begin{lemma}
  If $u$ is free-fermionic and $a_1 (u), a_2 (u)$ are nonzero, so that
  $u^{\ast}$ is defined, then
  \begin{equation}
    \label{eq:ffaster} a_1 (u^{\ast}) = a_2 (u), \qquad a_2 (u^{\ast}) = a_1
    (u) .
  \end{equation}
  The map $u \mapsto u^{\ast}$ extends to a continuous map on all
  free-fermionic elements of~$S$.
\end{lemma}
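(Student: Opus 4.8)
The plan is to handle the two assertions in turn: the displayed identities (\ref{eq:ffaster}) reduce to a one-line substitution, and the continuous extension then follows because the resulting formulas are polynomial in the entries of $u$.

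For (\ref{eq:ffaster}), I would begin from the identity $N(u) = a_1(u)\,a_2(u) - \det(B(u))$, so that the free-fermionic hypothesis $N(u) = 0$ is precisely $\det(B(u)) = a_1(u)\,a_2(u)$. Since $a_1(u), a_2(u) \neq 0$, the defining formulas $a_1(u^\ast) = \det(B(u))/a_1(u)$ and $a_2(u^\ast) = \det(B(u))/a_2(u)$ apply, and substituting $\det(B(u)) = a_1(u)\,a_2(u)$ gives $a_1(u^\ast) = a_2(u)$ and $a_2(u^\ast) = a_1(u)$ at once. No genuine computation is involved.

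For the extension, the key observation is that, once (\ref{eq:ffaster}) is established, every entry of $u^\ast$ is a polynomial in the entries of $u$. Indeed the middle block of $u^\ast$ is $\left(\begin{smallmatrix} c_2(u) & -b_1(u) \\ -b_2(u) & c_1(u) \end{smallmatrix}\right)$, which is defined for all $u$, while the two diagonal entries may now be taken to be $a_1(u^\ast) = a_2(u)$ and $a_2(u^\ast) = a_1(u)$. I would therefore simply define the extended map on all of $S_{\operatorname{ff}}$ by these polynomial formulas. The resulting map is visibly continuous (in fact a morphism of varieties), it agrees with the original $u \mapsto u^\ast$ wherever $a_1(u), a_2(u) \neq 0$ by the first part, and it lands back in $S$ because $c_1(u^\ast) = c_2(u)$ and $c_2(u^\ast) = c_1(u)$ are nonzero for $u \in S$. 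Thus it is a bona fide continuous extension.

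The only point needing care---and what I would flag as the (minor) obstacle---is justifying that this deserves to be called \emph{the} extension. For this I would note that $\{u \in S_{\operatorname{ff}} : a_1(u)\,a_2(u) \neq 0\}$ is dense in $S_{\operatorname{ff}}$: on $S_{\operatorname{ff}}$ the vanishing of $a_1(u)$ or $a_2(u)$ forces $\det(B(u)) = a_1(u)\,a_2(u) = 0$, a proper closed condition, and $S_{\operatorname{ff}}$ (a nonempty open piece of the irreducible quadric $N = 0$) is irreducible, so the complement of a proper closed subset is dense in both the Zariski and the complex topologies. Hence the polynomial map above is the unique continuous extension of $u \mapsto u^\ast$ to $S_{\operatorname{ff}}$, completing the proof.
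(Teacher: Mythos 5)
Your proof is correct and takes essentially the same route as the paper: both derive (\ref{eq:ffaster}) by substituting $\det(B(u)) = a_1(u)\,a_2(u)$ (the free-fermionic condition) into the defining formulas for $a_1(u^{\ast})$ and $a_2(u^{\ast})$, and both then use the resulting polynomial formulas to define the continuous extension to all of $S_{\operatorname{ff}}$. Your closing density argument, showing the extension is unique because $\{a_1(u)a_2(u) \neq 0\}$ is dense in the irreducible set $S_{\operatorname{ff}}$, is sound extra care that the paper omits, since the lemma asserts only existence of the extension.
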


\begin{proof}
  If $a_1 (u)$ and $a_2 (u)$ are nonzero, then (\ref{eq:ffaster}) follows from
  $c_1 (u) c_2 (u) - b_1 (u) b_2 (u) = a_1 (u) a_2 (u)$. This implies
  (\ref{eq:ffaster}). Using this formula for $a_1 (u^{\ast})$ and $a_2
  (u^{\ast})$ gives the continuous extension to all free-fermionic six-vertex
  matrices.
\end{proof}

\section{The Free-Fermionic Yang-Baxter equation}\label{sec:freeferm}

If $u, v$ are free-fermionic, then Theorem~\ref{thm:wybe} simplifies as
follows. Versions of this were proved by Korepin (see {\cite{KBI93}}, page 126),
and by Brubaker, Bump and Friedberg~{\cite{BBF11}}.

\begin{theorem}[Korepin; Brubaker-Bump-Friedberg]
  Suppose $u, v \in S$ are free-fermionic. Then there exists $w \in S$ that is
  also free-fermionic, such that $\llbracket u, w, v \rrbracket = 0$. We have
  \begin{equation}
    \label{eq:ffwybe} \begin{array}{lc}
      a_1 (w) = a_1 (u) a_1 (v) - b_2 (u) b_1 (v), & a_2 (w) = - b_1 (u) b_2
      (v) + a_2 (u) a_2 (v),\\
      b_1 (w) = b_1 (u) a_1 (v) + a_2 (u) b_1 (v), & b_2 (w) = a_1 (u) b_2 (v)
      + b_2 (u) a_2 (v)
    \end{array}
  \end{equation}
\end{theorem}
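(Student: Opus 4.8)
The plan is to specialize Theorem~\ref{thm:wybe} to the free-fermionic regime and then verify the resulting weights by hand. Since $u$ and $v$ are free-fermionic we have $N(u) = N(v) = 0$, so every term in the compatibility conditions (\ref{eq:wcond}) vanishes and those conditions hold automatically. Thus, as long as $a_1(u), a_2(u), a_1(v), a_2(v)$ are nonzero so that $u^{\ast}$ and $v^{\ast}$ are defined, Theorem~\ref{thm:wybe} immediately produces a normalized solution $w$, unique up to scalar, whose weights are given by (\ref{eq:wbcdef}) and (\ref{eq:normalcond}).

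The next step is to rewrite (\ref{eq:wbcdef}) in free-fermionic form. The expressions for $a_1(w)$ and $a_2(w)$ there already agree with (\ref{eq:ffwybe}). For the off-diagonal weights I would substitute the free-fermionic relations $a_1(u^{\ast}) = a_2(u)$ and $a_2(u^{\ast}) = a_1(u)$ from (\ref{eq:ffaster}) into the first expressions for $b_1(w)$ and $b_2(w)$ in (\ref{eq:wbcdef}); this turns them into $b_1(w) = b_1(u)a_1(v) + a_2(u)b_1(v)$ and $b_2(w) = a_1(u)b_2(v) + b_2(u)a_2(v)$, matching (\ref{eq:ffwybe}).

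The one genuine calculation is to confirm that $w$ is again free-fermionic, i.e.\ $N(w) = 0$. Expanding $a_1(w)a_2(w) + b_1(w)b_2(w)$ with the formulas (\ref{eq:ffwybe}), the two cross terms $a_1(u)a_1(v)b_1(u)b_2(v)$ and $a_2(u)a_2(v)b_1(v)b_2(u)$ occur with opposite signs and cancel, and the remainder factors as
\[ \bigl(a_1(u)a_2(u) + b_1(u)b_2(u)\bigr)\bigl(a_1(v)a_2(v) + b_1(v)b_2(v)\bigr). \]
Applying the free-fermionic identity $a_1 a_2 + b_1 b_2 = c_1 c_2$ to each factor gives $c_1(u)c_2(u)\,c_1(v)c_2(v)$, which by the normalization (\ref{eq:normalcond}) equals $c_1(w)c_2(w)$. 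Hence $N(w) = 0$; moreover $c_1(w) = c_1(u)c_1(v)$ and $c_2(w) = c_2(u)c_2(v)$ are nonzero, so $w$ is a bona fide free-fermionic element of $S$.

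Finally I would discharge the temporary assumption that the $a_i$ are nonzero. The weights (\ref{eq:ffwybe}) are polynomials in the weights of $u$ and $v$, so they define a six-vertex matrix $w$ for every pair of free-fermionic $u, v$; and both $\llbracket u, w, v \rrbracket = 0$ and $N(w) = 0$ are polynomial identities that hold on the dense set where the $a_i$ are nonzero, hence everywhere by continuity. (The continuous extension of $u \mapsto u^{\ast}$ recorded after (\ref{eq:ffaster}) is exactly what legitimizes this limiting argument.) I expect the $N(w) = 0$ verification to be the only real content, though the cancellation above keeps it short; the passage to the boundary where some $a_i$ vanishes is the other point needing a word of care, and the polynomial form of (\ref{eq:ffwybe}) handles it.
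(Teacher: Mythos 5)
Your proof is correct, and its outer skeleton matches the paper's: both specialize Theorem~\ref{thm:wybe} (whose hypotheses hold because $N(u)=N(v)=0$ kills both sides of (\ref{eq:wcond})), both rewrite (\ref{eq:wbcdef}) via (\ref{eq:ffaster}), and both remove the nondegeneracy assumptions at the end by continuity, using that (\ref{eq:ffwybe}) is polynomial. The genuine divergence is in how $w$ is shown to be free-fermionic. The paper argues indirectly: it restricts to $u,v\in S^{\circ}$, and since Proposition~\ref{prop:threedel} requires all three of $u,w,v$ to lie in $S^{\circ}$, it must further shrink to an open subset $U\subset S_{\operatorname{ff}}\times S_{\operatorname{ff}}$ on which $w\in S^{\circ}$, concluding $\Delta(w)=\Delta(v)=(0,0)$ there and extending by continuity. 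You instead verify $N(w)=0$ by direct expansion, and your factorization
\[ a_1(w)a_2(w)+b_1(w)b_2(w)=\bigl(a_1(u)a_2(u)+b_1(u)b_2(u)\bigr)\bigl(a_1(v)a_2(v)+b_1(v)b_2(v)\bigr) \]
is correct (the cross terms cancel exactly as you say). This buys several things: your existence step needs only $a_i\neq 0$ rather than full membership in $S^{\circ}$ (so you bypass Corollary~\ref{cor:conddel}, which presupposes $b_i\neq0$ for $\Delta$ to be defined, and you skip the paper's second genericity step entirely); and the factorization exposes the multiplicative structure behind the theorem --- it is precisely the identity underlying the $\operatorname{GL}(2)\times\operatorname{GL}(1)$ group law displayed after the proof. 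In fact, since your $N(w)=0$ computation is a polynomial identity in the weights of $u$ and $v$, it holds on all of $S_{\operatorname{ff}}\times S_{\operatorname{ff}}$ with no limiting argument at all; only $\llbracket u,w,v\rrbracket=0$ needs the continuity step, so you could drop $N(w)=0$ from the list of facts extended by continuity. The paper's route, conversely, recycles already-proved structural facts (Lemma~\ref{lem:sixcases}, Proposition~\ref{prop:threedel}) at the cost of the extra open-set argument. One small point worth a sentence in a final write-up: the continuity step needs the locus $a_1,a_2\neq 0$ to be dense in $S_{\operatorname{ff}}$, which holds because $S_{\operatorname{ff}}$ is an irreducible quadric hypersurface inside $\{c_1c_2\neq 0\}$, so the closed subsets $\{a_i=0\}$ are proper.
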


\begin{proof}
  First assume that $u, v \in S^{\circ}$. When $a_1 (u), a_2 (u) \neq 0$, the
  free-fermionic condition is equivalent to $\Delta (u) = (0, 0)$. Thus by
  Corollary~\ref{cor:conddel}, condition (\ref{eq:wcond}) is satisfied and by
  Theorem~\ref{thm:wybe} there exists $w \in S$ such that $\llbracket u, w, v
  \rrbracket = 0$. Equation (\ref{eq:ffwybe}) follows from (\ref{eq:wbcdef})
  taking (\ref{eq:ffaster}) into account. The alternative expressions for $b_1
  (w)$ and $b_2 (w)$ reduce to the same formula in this case. We see that
  there exists an open subset $U$ of $S_{\operatorname{ff}} \times S_{\operatorname{ff}}$ such
  that if $(u, v) \in U$, then $w$ is in the dense open subset $S^{\circ}$ of
  $S$. In this case $\Delta (w) = \Delta (v) = (0, 0)$ by
  Propostion~\ref{prop:threedel}, so $w$ is free-fermionic. These arguments
  rely on the assumption that $(u, v)$ lies in an open subset of
  $S_{\operatorname{ff}} \times S_{\operatorname{ff}}$ but since the right-hand side of
  (\ref{eq:ffwybe}) is obviously continuous on all $S_{\operatorname{ff}} \times
  S_{\operatorname{ff}}$, the general case follows by continuity.
\end{proof}
\[ \left(\begin{array}{cc}
     a_1 (u) & - b_2 (u)\\
     b_1 (u) & a_2 (u)
   \end{array}\right) \left(\begin{array}{cc}
     a_1 (v) & - b_2 (v)\\
     b_1 (v) & a_2 (v)
   \end{array}\right) = \left(\begin{array}{cc}
     a_1 (w) & - b_2 (w)\\
     b_1 (w) & a_2 (w)
   \end{array}\right) \]
This result may be explained in terms of a parametrized Yang-Baxter equation.
Let $\mathfrak{G}_{\operatorname{ff}}$ be the group $\operatorname{GL} (2, \mathbb{C}) \times
\mathbb{C}^{\times}$ and let $R_{\operatorname{ff}} : \mathfrak{G}_{\operatorname{ff}}
\longrightarrow S_{\operatorname{ff}}$ be the bijective map
\[ R_{\operatorname{ff}} \left( \left(\begin{array}{cc}
     a_1 & - b_2\\
     b_1 & a_2
   \end{array}\right), c_1 \right) = \left(\begin{array}{cccc}
     a_1 &  &  & \\
     & c_1 & b_2 & \\
     & b_1 & c_2 & \\
     &  &  & a_2
   \end{array}\right), \qquad c_2 = \frac{a_1 a_2 + b_1 b_2}{c_1} . \]
\begin{corollary}
  The map $R_{\operatorname{ff}} : \mathfrak{G}_{\operatorname{ff}} \longrightarrow
  S_{\operatorname{ff}}$ is a parametrized Yang-Baxter equation with parameter group
  $\Gamma_{\operatorname{ff}} \cong \operatorname{GL} (2, \mathbb{C}) \times \operatorname{GL} (1,
  \mathbb{C})$. Thus if $\gamma, \delta \in \mathfrak{G}_{\operatorname{ff}}$ then
  \[ \llbracket R_{\operatorname{ff}} (\gamma), R_{\operatorname{ff}} (\gamma \delta),
     R_{\operatorname{ff}} (\delta) \rrbracket = 0. \]
\end{corollary}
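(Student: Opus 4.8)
The plan is to read this off the preceding theorem of Korepin and Brubaker--Bump--Friedberg, which already produces, for free-fermionic $u,v$, a free-fermionic solution $w$ of $\llbracket u,w,v\rrbracket=0$ with weights given explicitly by~(\ref{eq:ffwybe}). Since $R_{\operatorname{ff}}$ is a bijection from $\operatorname{GL}(2,\mathbb{C})\times\mathbb{C}^\times$ onto $S_{\operatorname{ff}}$, all that remains is to show that $R_{\operatorname{ff}}$ carries the group law to the composition $(u,v)\mapsto w$: that is, if $u=R_{\operatorname{ff}}(\gamma)$ and $v=R_{\operatorname{ff}}(\delta)$, then the free-fermionic $w$ supplied by the theorem is exactly $R_{\operatorname{ff}}(\gamma\delta)$. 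Because $R_{\operatorname{ff}}$ splits the parameter group into the $\operatorname{GL}(2)$-factor (the matrix of $a$- and $b$-weights) and the $\mathbb{C}^\times$-factor (the weight $c_1$), I would reduce the claim to two independent checks, one per factor.

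The key step is the $\operatorname{GL}(2)$-factor, and it is precisely the $2\times 2$ matrix identity displayed immediately after the theorem. Writing $M(x)=\left(\begin{smallmatrix} a_1(x) & -b_2(x)\\ b_1(x) & a_2(x)\end{smallmatrix}\right)$, one checks directly that the four formulas in~(\ref{eq:ffwybe}) are nothing but the four entries of the product $M(u)M(v)$, so that $M(u)M(v)=M(w)$. Translating this through the definition of $R_{\operatorname{ff}}$ (which records the pair consisting of such a matrix together with $c_1$, with its particular labeling of the two $b$-weights) identifies the $\operatorname{GL}(2)$-coordinate of $w$ with the product of those of $u$ and $v$. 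I would carry out this entrywise comparison first, paying close attention to matching the order of the factors to the group multiplication $\gamma\delta$ appearing in the statement.

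It then remains to handle the $\mathbb{C}^\times$-factor and pin $w$ down completely. The normalization condition~(\ref{eq:normalcond}) gives $c_1(w)=c_1(u)\,c_1(v)$, which is exactly multiplicativity in the $\mathbb{C}^\times$-coordinate. Finally, since $w$ is free-fermionic its remaining weight is forced, $c_2(w)=\bigl(a_1(w)a_2(w)+b_1(w)b_2(w)\bigr)/c_1(w)$, so $w$ is determined by its $\operatorname{GL}(2)$- and $c_1$-data alone; combining the two checks yields $w=R_{\operatorname{ff}}(\gamma\delta)$, which is the assertion. I expect the only genuine difficulty here to be bookkeeping rather than mathematics: the substantive content is the matrix identity $M(u)M(v)=M(w)$, which is already essentially exhibited, while the delicate point is to verify that the convention of $R_{\operatorname{ff}}$ (the assignment of the two off-diagonal $b$-weights, and hence the order in which the $2\times 2$ matrices multiply) is consistent with writing the middle term as $R_{\operatorname{ff}}(\gamma\delta)$ rather than $R_{\operatorname{ff}}(\delta\gamma)$.
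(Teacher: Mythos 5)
Your proposal is correct and is essentially the paper's own argument: the paper's proof consists of the single observation that the formulas (\ref{eq:ffwybe}), together with the normalization (\ref{eq:normalcond}), amount to the displayed $2\times 2$ matrix identity (the product of the matrices for $u$ and $v$ equals that for $w$), so that $w = R_{\operatorname{ff}}(\gamma\delta)$, and your additional checks (multiplicativity of $c_1$, and $c_2$ being forced by the free-fermionic relation) are exactly the routine details the paper leaves implicit. The bookkeeping caveat you flag about the labeling of the two $b$-weights in the definition of $R_{\operatorname{ff}}$ is well taken, but read against the displayed identity it resolves in the intended way, namely the convention under which $R_{\operatorname{ff}}$ is a homomorphism in the $\operatorname{GL}(2)$-factor and the middle term is $R_{\operatorname{ff}}(\gamma\delta)$.
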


\begin{proof}
  Indeed (\ref{eq:ffwybe}) implies that $w = R_{\operatorname{ff}} (\gamma \delta)$,
  where $\gamma \delta$ is just the product of $\gamma$ and $\delta$ in the
  group.
\end{proof}

\section{Preparations for the Non-Free-Fermionic Groupoid}

\

We now turn to the non-free-fermionic case. In order to obtain a groupoid
parametrized Yang-Baxter equation we will have to ``blow up'' part of the
boundary of the free-fermionic domain. We will carry out these details in the
next section, but here we make some preparations.

We have introduced several sets $\Omega^{\circ}, \Omega_b, \Omega_a$ and
$\Omega_B$ in Table~\ref{tab:menagerie}, of which $\Omega^{\circ}$ is the open
subset of $S$ defined by the nonvanishing of $a_1, a_2, b_1, b_2, c_1, c_2$,
$\det (B (u))$ and $N (u)$. We refer to the table for the definitions of the
other three sets. The set $\Omega_B$ is of codimension~1 in $S$, and the
subsets $\Omega_b$ and $\Omega_a$ are smaller, of codimension~3. The sets
$\Omega_b$ and $\Omega_a$ are free-fermionic. Elements of $\Omega^{\circ}$ and
$\Omega_B$ are not. Elements of $\Omega^{\circ}$ and $\Omega_b$ are
invertible, but elements of $\Omega_a$ and $\Omega_B$ are not. We let
$\overline{\Omega}$ be the union of the four disjoint sets $\Omega^{\circ},
\Omega_b, \Omega_a$ and~$\Omega_B$. We will also denote $\partial \Omega =
\Omega_b \cup \Omega_a \cup \Omega_B$. We will describe $\Omega^{\circ}$ as
the interior of $\overline{\Omega}$ and $\partial \Omega$ as the boundary.
Also let $\Omega = \Omega^{\circ} \cup \Omega_B$, this being the set of
matrices characterized by the inequalities $a_1, a_2, b_1, b_2, c_1, c_2, N
(u) \neq 0$.

\begin{proposition}
  \label{prop:zirr}The subset
  \[ X = \{  (u, v) \in \Omega^{\circ} \times \Omega^{\circ} | \Delta (u) =
     \Delta (v^{\ast}) \} \]
  is irreducible in the Zariski topology.
\end{proposition}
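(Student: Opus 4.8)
The plan is to exhibit $X$ explicitly as (a Zariski-open subset of) an affine variety that is manifestly irreducible, by parametrizing it rationally from an irreducible parameter space. Recall that $\Omega^\circ$ is a Zariski-open subset of the six-dimensional affine space of six-vertex matrices (coordinates $a_1,a_2,b_1,b_2,c_1,c_2$), hence irreducible of dimension $6$. The defining condition $\Delta(u)=\Delta(v^\ast)$ is, by (\ref{eq:defdel1})–(\ref{eq:defdel2}) and (\ref{eq:delustar}), the pair of equations $\Delta_1(u)=\Delta_1(v^\ast)$ and $\Delta_2(u)=\Delta_2(v^\ast)$, i.e.
\[
\frac{N(u)}{a_1(u)b_1(u)}=\frac{N(v)}{a_2(v)b_1(v)},\qquad
\frac{N(u)}{a_2(u)b_2(u)}=\frac{N(v)}{a_1(v)b_2(v)},
\]
which (after clearing the nonvanishing denominators) are two polynomial relations on $\Omega^\circ\times\Omega^\circ$. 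So $X$ is a locally closed subset cut out by two equations inside a smooth irreducible $12$-dimensional variety.

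The main step I would carry out is to show these two equations are \emph{independent} and that $X$ is a rational image of an irreducible space, so that counting dimension suffices together with an explicit parametrization. Concretely, I would fix $u\in\Omega^\circ$ arbitrarily (a $6$-dimensional choice) and solve for $v$: given the value $(\delta_1,\delta_2)=\Delta(u)$, the fiber condition on $v$ is $\Delta(v^\ast)=(\delta_1,\delta_2)$. Using (\ref{eq:delustar}), this says $N(v)/(a_2(v)b_1(v))=\delta_1$ and $N(v)/(a_1(v)b_2(v))=\delta_2$; writing $N(v)=a_1(v)a_2(v)+b_1(v)b_2(v)-c_1(v)c_2(v)$, these are two equations on the $6$ coordinates of $v$, cutting out a $4$-dimensional locus for each generic $u$. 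This makes $X$ a variety of dimension $6+4=10$, fibered over the irreducible base $\Omega^\circ$ with irreducible generic fiber — the fiber being a rational variety, since one can solve for, say, $c_1(v),c_2(v)$ linearly (note $c_1c_2$ appears linearly in $N$) in terms of the remaining free coordinates once $\delta_1,\delta_2$ are fixed. A dominant morphism from an irreducible space with irreducible fibers onto $X$, or equivalently a rational parametrization of $X$ by an affine space, then gives irreducibility.

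\textbf{The cleanest route} is probably to produce an explicit birational parametrization. Take as free parameters the six coordinates of $u$ together with $a_1(v),a_2(v),b_1(v),b_2(v)$ (a $10$-dimensional affine parameter space, which is irreducible), and solve the two linear-in-$(c_1(v),c_2(v))$ equations coming from $\Delta(v^\ast)=\Delta(u)$ for $c_1(v)$ and $c_2(v)$ as rational functions of the parameters. The two equations, once $N(v)$ is expanded, read
\[
a_1(v)a_2(v)+b_1(v)b_2(v)-c_1(v)c_2(v)=\delta_1\,a_2(v)b_1(v)=\delta_2\,a_1(v)b_2(v),
\]
so in fact they force the compatibility $\delta_1 a_2(v)b_1(v)=\delta_2 a_1(v)b_2(v)$ and then determine the single quantity $c_1(v)c_2(v)$; one recovers $(c_1(v),c_2(v))$ up to the residual scaling that is exactly accounted for by the remaining coordinate freedom. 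This realizes $X$ as the image of a rational map from an irreducible affine variety, hence $X$ is irreducible.

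\textbf{The hard part} will be verifying that the parametrization is genuinely dominant onto $X$ and that the residual compatibility $\delta_1 a_2 b_1=\delta_2 a_1 b_2$ does not break irreducibility by splitting the fiber into components — one must check that the locus where this holds, together with the determination of $c_1 c_2$, stays irreducible rather than, say, factoring into two components corresponding to the two choices in solving a quadratic. I expect this is controlled precisely because $c_1 c_2$ (not $c_1,c_2$ separately) enters $N(u)$, so only the \emph{product} is constrained and no square-root branching occurs; this is where I would spend the most care, confirming that the two defining equations meet transversally on $\Omega^\circ\times\Omega^\circ$ and that no spurious component hides in the vanishing loci we have excluded.
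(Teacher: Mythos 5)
Your proposal is correct in substance but takes a genuinely different route from the paper. The paper fibers $X$ over the base $(\mathbb{C}^{\times})^2$ of $\Delta$-values via $(u,v)\mapsto\Delta(u)=\Delta(v^{\ast})$, so that the fiber over $\alpha$ is a product $L_{\alpha}\times R_{\alpha}$, and it argues each factor is irreducible for $\alpha$ in general position because it is an intersection of two quadrics; irreducibility of $X$ then follows from the generic-fiber criterion. You instead fiber over the first factor $\Omega^{\circ}$ (equivalently, give a global rational parametrization), establishing fiber irreducibility constructively: the two conditions $N(v)=\delta_1 a_2(v)b_1(v)$ and $N(v)=\delta_2 a_1(v)b_2(v)$ determine only the product $c_1(v)c_2(v)$ together with one binomial compatibility relation, so no quadratic branching can occur. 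Your mid-argument claim that one can ``solve the two linear-in-$(c_1(v),c_2(v))$ equations for $c_1(v)$ and $c_2(v)$'' from ten free parameters is a slip --- the equations never separate $c_1$ from $c_2$ --- but you correct it yourself immediately: the honest parameter space is the compatibility locus times a copy of $\mathbb{C}^{\times}$ for the free choice of $c_1(v)$. The ``hard part'' you flag does close, and easily: after clearing the nonvanishing factor $N(u)$, the compatibility relation reads $a_2(u)b_2(u)\,a_2(v)b_1(v)=a_1(u)b_1(u)\,a_1(v)b_2(v)$, which on the relevant torus is the kernel of a primitive character, hence connected and irreducible; crossing with the free coordinates $c_1(u),c_2(u),c_1(v)$ and intersecting with the open conditions defining $\Omega^{\circ}\times\Omega^{\circ}$ preserves irreducibility. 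Moreover, dominance is a non-issue: every $(u,v)\in X$ satisfies the two relations, so your parametrization is surjective onto $X$ by construction, and $X$ is the image of an irreducible variety under a morphism. This buys you something the paper's argument glosses over: strictly, ``irreducible base with generically irreducible fibers'' forces irreducibility only if no component of $X$ hides over the non-generic locus of $\alpha$, and the irreducibility of a generic intersection of two quadrics is asserted rather than proved there; your surjective parametrization sidesteps both points. What the paper's approach buys in exchange is brevity and the symmetric treatment of $u$ and $v$.
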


\begin{proof}
  Let $\alpha=(\alpha_1,\alpha_2) \in (\mathbb{C}^{\times})^{2}$ and let $L_{\alpha} = \{ u
  \in \Omega^{\circ} | \Delta (u) = \alpha \}$, $R_{\alpha} = \{ v \in
  \Omega^{\circ} | \Delta (v^{\ast}) = \alpha \}$. Then $X$ is the union of
  the fibers of the map $\varphi : X \longrightarrow \mathbb{C}^{\times}$
  mapping $(u, v) \in X$ to $\Delta (u) = \Delta (v^{\ast})$. Now consider
  $L_{\alpha}$. This is the locus of
  \[ \begin{array}{c}
       a_1 (u) a_2 (u) + b_1 (u) b_2 (u) - c_1 (u) c_2 (u) - \alpha_1 a_1 (u)
       b_1 (u) = 0\\
       a_1 (u) a_2 (u) + b_1 (u) b_2 (u) - c_1 (u) c_2 (u) - \alpha_2 a_2 (u)
       b_1 (u) = 0.
     \end{array} \]
  It is the intersection of two quadrics and as such it is irreducible if
  $\alpha$ is in general position. Similarly $R_{\alpha}$ is also irreducible.
  Thus the fiber $\varphi^{- 1} (\alpha)
  \cong L_{\alpha} \times R_{\alpha}$ is irreducible for $\alpha$ in general
  position. Thus $X$ is fibered over $(\mathbb{C}^{\times})^2$ with
  generically irreducible fibers, and it is therefore irreducible.
\end{proof}

\begin{proposition}
  \label{prop:odelto}Suppose $u, v$ are elements of $\Omega^{\circ}$ such that
  $\Delta (u) = \Delta (v^{\ast})$. Let $w$ be the normalized solution to the
  $\llbracket u, w, v \rrbracket = 0$ as in Theorem~\ref{thm:wybe}. Then
  \begin{equation}
    \label{eq:dvwid} a_1 (v) b_1 (v) N (w) = a_1 (w) b_1 (w) N (v), \qquad a_2
    (v) b_2 (v) N (w) = a_2 (w) b_2 (w) N (v)
  \end{equation}
  and
  \begin{equation}
    \label{eq:duwid} a_1 (w) b_2 (w) N (u) = a_1 (u) b_2 (u) N (w), \qquad a_2
    (w) b_1 (w) N (u) = a_2 (u) b_1 (u) N (w),
  \end{equation}
\end{proposition}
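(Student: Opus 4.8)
The four identities in (\ref{eq:dvwid}) and (\ref{eq:duwid}) relate $N(w)$ to $N(u)$ and $N(v)$ via the $a$ and $b$ weights, so the natural strategy is to express everything in terms of the explicit normalized weights of $w$ given by (\ref{eq:wbcdef}) in Theorem~\ref{thm:wybe}, together with the constraint $\Delta(u)=\Delta(v^\ast)$, which by Corollary~\ref{cor:conddel} is exactly condition (\ref{eq:wcond}). First I would compute $N(w)=a_1(w)a_2(w)+b_1(w)b_2(w)-c_1(w)c_2(w)$ by substituting (\ref{eq:wbcdef}) and the normalization (\ref{eq:normalcond}), namely $c_1(w)=c_1(u)c_1(v)$ and $c_2(w)=c_2(u)c_2(v)$. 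The goal is to factor the resulting polynomial in the weights of $u$ and $v$ so that $N(w)$ appears as a product, one factor being $N(v)$ (for (\ref{eq:dvwid})) or $N(u)$ (for (\ref{eq:duwid})).

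A cleaner route, which I expect to be less painful than brute-force expansion, exploits the symmetry of the whole setup. For (\ref{eq:dvwid}) I would verify the first identity $a_1(v)b_1(v)N(w)=a_1(w)b_1(w)N(v)$ directly and then deduce the second from an involution. Recall the definitions (\ref{eq:defdel1}), (\ref{eq:defdel2}): the identity $a_1(v)b_1(v)N(w)=a_1(w)b_1(w)N(v)$ is equivalent to $\Delta_1(v)N(w)\cdot a_1(v)b_1(v)=\Delta_1(w)N(w)\cdot$ — more usefully, dividing both sides by $N(v)N(w)$ (both nonzero on $\Omega^\circ$) it becomes $\Delta_1(v)=\Delta_1(w)$ after using $\Delta_1(v)=N(v)/(a_1(v)b_1(v))$ and $\Delta_1(w)=N(w)/(a_1(w)b_1(w))$. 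But $\Delta(u)=\Delta(w)$ is precisely the first conclusion of Proposition~\ref{prop:threedel}, and $\Delta(u)=\Delta(v^\ast)$ by hypothesis, so the content reduces to a $\Delta$-matching that is essentially already available — the only genuine work is checking that $w\in S^\bullet$ (i.e.\ $b_1(w),b_2(w)\neq 0$) so that $\Delta_i(w)$ is defined, which holds for $u,v$ in an open dense subset and then extends by continuity as in the free-fermionic proof.

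Concretely, I would argue: on the open set where $w\in\Omega^\circ$, Proposition~\ref{prop:threedel} gives $\Delta(w)=\Delta(u)$ and $\Delta(v^\ast)=\Delta(u)$, hence $\Delta_1(w)=\Delta_1(v)$. Unwinding the definitions, $\dfrac{N(w)}{a_1(w)b_1(w)}=\dfrac{N(v)}{a_1(v)b_1(v)}$, which is the first equation of (\ref{eq:dvwid}); likewise $\Delta_2(w)=\Delta_2(v)$ gives the second. For (\ref{eq:duwid}), I would instead use the relation $\Delta(w^\ast)=\Delta(u^\ast)$ from Proposition~\ref{prop:threedel} together with the expressions (\ref{eq:delustar}) for $\Delta_i(u^\ast)$: unwinding $\Delta_1(w^\ast)=\Delta_1(u^\ast)$, i.e.\ $\dfrac{N(w)}{a_2(w)b_1(w)}=\dfrac{N(u)}{a_2(u)b_1(u)}$, yields the second equation of (\ref{eq:duwid}), and $\Delta_2(w^\ast)=\Delta_2(u^\ast)$ yields the first. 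Since all four identities are polynomial in the weights, the equalities proved on the dense open locus where $w\in\Omega^\circ$ persist on all of $\{(u,v)\in\Omega^\circ\times\Omega^\circ:\Delta(u)=\Delta(v^\ast)\}$, which is irreducible by Proposition~\ref{prop:zirr}.

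The main obstacle is the continuity/density step: the clean argument computes $\Delta_i(w)$, which requires $b_1(w),b_2(w)\neq 0$, whereas the normalized $w$ of Theorem~\ref{thm:wybe} need not lie in $S^\bullet$ for every admissible $(u,v)$. I would handle this exactly as in the free-fermionic theorem — establish the identities on the nonempty Zariski-open subset of $X$ where $w\in\Omega^\circ$, observe that both sides of each equation in (\ref{eq:dvwid}) and (\ref{eq:duwid}) are polynomials in the weights of $u$ and $v$, and invoke the irreducibility of $X$ from Proposition~\ref{prop:zirr} so that a polynomial identity holding on a dense open subset holds identically. A purely computational alternative, bypassing $\Delta$ entirely, is to substitute (\ref{eq:wbcdef}) into $N(w)$ and factor; this avoids the density subtlety but requires the routine polynomial algebra I am deliberately not carrying out here.
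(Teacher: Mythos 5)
Your proposal is essentially the paper's own first proof: verify (\ref{eq:dvwid}) and (\ref{eq:duwid}) on the open locus of $X$ where $w\in\Omega^{\circ}$ by unwinding the $\Delta$-identities of Proposition~\ref{prop:threedel} through (\ref{eq:defdel1}), (\ref{eq:defdel2}) and (\ref{eq:delustar}), then extend to all of $X$ by continuity using the irreducibility from Proposition~\ref{prop:zirr}; your ``purely computational alternative'' is exactly the paper's second proof. One wrinkle worth fixing: as literally written, ``$\Delta(w)=\Delta(u)$ and $\Delta(v^{\ast})=\Delta(u)$, hence $\Delta_1(w)=\Delta_1(v)$'' is a non sequitur --- those premises yield $\Delta_1(w)=\Delta_1(v^{\ast})=N(v)/(a_2(v)b_1(v))$, which is not the first equation of (\ref{eq:dvwid}). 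The fact you actually need is $\Delta(w)=\Delta(v)$, which is true (it is the middle identity of (\ref{eq:deltatr}), and follows from Corollary~\ref{cor:conddel} applied to $\llbracket w,u,v^{\ast}\rrbracket=0$ from Lemma~\ref{lem:sixcases}, using that $(v^{\ast})^{\ast}=v$ and $\Delta$ is scale-invariant); the first displayed identity of Proposition~\ref{prop:threedel} as printed appears to be a typo for this, which you evidently corrected implicitly. With that repair your argument goes through verbatim, including the correct use of $\Delta(w^{\ast})=\Delta(u^{\ast})$ for (\ref{eq:duwid}) and the density/irreducibility step, so the proof is sound and coincides with the paper's.
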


\begin{remark}
  Beginning with this Proposition, we will make use of continuity arguments in
  the sequel. This is for convenience and efficiency and could be replaced by
  calculations with equations. To emphasize this point, we will give two
  proofs of this result, one using a continuity argument, and one arguing
  directly from the equations.
\end{remark}

\begin{proof}[First Proof]
  First assume that $w \in \Omega^{\circ}$. Then these identities follow from
  Proposition~\ref{prop:threedel} and equations (\ref{eq:defdel1}),
  (\ref{eq:defdel2}) and (\ref{eq:delustar}). We will deduce the general case
  of (\ref{eq:dvwid}) from this special case by continuity. Let $X$ be as in
  Proposition~\ref{prop:zirr}. The subset of $X$ where $w \in \Omega^{\circ}$
  is open in $X$. But $X$ is irreducible, so every nonempty open set is dense.
  Thus (\ref{eq:dvwid}) and (\ref{eq:duwid}) are true on all of $X$ by
  continuity.
\end{proof}

\begin{proof}[Second Proof]
  Alternative to the continuity argument in the first proof, we may modify the
  reasoning in the proof of Theorem~\ref{thm:wybe} and deduce a formula for
  $u$ in terms of $w$ and $v$. It is important that the argument is valid
  without assuming that $w \in \Omega^{\circ}$. We rewrite the normalization
  condition (\ref{eq:normalcond}) as
  \[ c_1 (u) = c_1 (w) / c_1 (v), \quad c_2 (u) = c_2 (w) / c_1 (v) . \]
  Substituting this into the equations (\ref{eq:thetwelve}), clearing the
  denominators and eliminating the redundant equations, we obtain
  \[ \begin{array}{rcc}
       b_2 (u) c_1 (v) c_2 (v) - b_2 (w) a_1 (v) + a_1 (w) b_2 (v) & = & 0\\
       - a_1 (u) a_1 (v) + b_2 (u) b_1 (v) + a_1 (w) & = & 0\\
       - b_1 (u) a_1 (w) c_1 (v) c_2 (v) + a_1 (u) b_1 (w) c_1 (v) c_2 (v) -
       c_1 (w) c_2 (w) b_1 (v) & = & 0\\
       - b_2 (u) a_2 (w) c_1 (v) c_2 (v) + a_2 (u) b_2 (w) c_1 (v) c_2 (v) -
       c_1 (w) c_2 (w) b_2 (v) & = & 0\\
       - a_2 (u) a_2 (v) + b_1 (u) b_2 (v) + a_2 (w) & = & 0\\
       b_1 (u) c_1 (v) c_2 (v) - b_1 (w) a_2 (v) + a_2 (w) b_1 (v) & = & 0
     \end{array} \]
  Since $v \in \Omega^{\circ}$, $a_1 (v)$ and $a_2 (v)$ are nonzero so
  \[ a_1 (u) = \frac{b_2 (u) b_1 (v) + a_1 (w)}{a_1 (v)}, \qquad a_2 (u) =
     \frac{b_1 (u) b_2 (v) + a_2 (w)}{a_2 (v)} . \]
  Substituting these there are eight equations but only four nonredundant
  ones. Among these are two that may be rearranged as:
  \[ b_1 (u) = \frac{b_1 (w) a_2 (v) - a_2 (w) b_1 (v)}{c_1 (v) c_2 (v)},
     \qquad b_2 (u) = \frac{b_2 (w) a_1 (v) - a_1 (w) b_2 (v)}{c_1 (v) c_2
     (v)} . \]
  Substituting these values, only two nonredundant equations remain, and these
  are (\ref{eq:dvwid}). Equations (\ref{eq:duwid}) may be proved the same way,
  solving for $v$ instead of~$u.$
\end{proof}

\begin{lemma}
  \label{lem:omegaab}Suppose that $u, v$ are elements of $\Omega^{\circ}$ such
  that $\Delta (u) = \Delta (v^{\ast})$. Let $w$ be the normalized solution to
  the $\llbracket u, w, v \rrbracket = 0$ as in Theorem~\ref{thm:wybe}. Assume
  that $N (w) = 0$. Then either $w \in \Omega_a$ or $w \in \Omega_b$.
\end{lemma}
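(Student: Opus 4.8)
The plan is to analyze the consequences of the hypothesis $N(w)=0$ using the explicit identities for $w$ already established, and to show that this forces $w$ into one of the two free-fermionic boundary strata $\Omega_a$ or $\Omega_b$. Since $u,v \in \Omega^{\circ}$, all of $a_1(v), a_2(v), b_1(v), b_2(v)$ and $N(v)$ are nonzero. I would feed $N(w)=0$ into the four identities of Proposition~\ref{prop:odelto}. From (\ref{eq:dvwid}), setting $N(w)=0$ gives
\[
a_1(w)\, b_1(w)\, N(v) = 0, \qquad a_2(w)\, b_2(w)\, N(v) = 0,
\]
and since $N(v) \neq 0$, this forces $a_1(w) b_1(w) = 0$ and $a_2(w) b_2(w) = 0$.

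Next I would determine which of the factors vanish, and argue that the vanishing pattern is forced to be ``consistent'' — that is, either both $b$-weights of $w$ vanish, or both $a$-weights vanish, rather than a mixed pattern. The first observation is that $w$ cannot have all of $a_1(w), a_2(w), b_1(w), b_2(w)$ nonzero together with $N(w)=0$ and still be excluded; but here we already know $N(w)=0$, which by the definition of $N$ means $a_1(w) a_2(w) + b_1(w) b_2(w) = c_1(w) c_2(w)$. Since $w$ is a six-vertex matrix, $c_1(w), c_2(w) \neq 0$, so the left-hand side is nonzero; hence $a_1(w) a_2(w)$ and $b_1(w) b_2(w)$ cannot both vanish simultaneously. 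This rules out the mixed scenarios: if, say, $b_1(w)=0$ and $a_2(w)=0$ (coming from $a_1 b_1 = 0$ via $b_1$ and $a_2 b_2 = 0$ via $a_2$), then combined with the appropriate companion we must check we do not land in a contradiction. The clean way is to show that $a_1(w) b_1(w) = a_2(w) b_2(w) = 0$ together with $a_1(w) a_2(w) + b_1(w) b_2(w) \neq 0$ forces either $b_1(w) = b_2(w) = 0$ (the $\Omega_b$ case) or $a_1(w) = a_2(w) = 0$ (the $\Omega_a$ case).

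The main obstacle — really a bookkeeping obstacle rather than a deep one — is ruling out the mixed cases cleanly. Suppose $a_1(w)=0$ and $a_2(w) \neq 0$; then from $a_2(w) b_2(w)=0$ we get $b_2(w)=0$, and from $a_1(w) b_1(w)=0$ (automatically satisfied) we have no constraint yet on $b_1(w)$. Then $a_1(w) a_2(w) + b_1(w) b_2(w) = 0$, contradicting that this equals $c_1(w) c_2(w) \neq 0$. The symmetric argument disposes of $a_2(w)=0, a_1(w)\neq 0$, and of the two analogous cases with a single $b$-weight vanishing. Thus the only surviving possibilities are $a_1(w)=a_2(w)=0$ or $b_1(w)=b_2(w)=0$. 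In the former, $N(w)=0$ reads $b_1(w) b_2(w) = c_1(w) c_2(w)$, which is exactly the defining relation of $\Omega_a$ (and $b_1(w), b_2(w) \neq 0$ since otherwise $c_1(w) c_2(w)=0$); in the latter, $N(w)=0$ reads $a_1(w) a_2(w) = c_1(w) c_2(w)$, the defining relation of $\Omega_b$ (with $a_1(w), a_2(w) \neq 0$). This places $w$ in $\Omega_a$ or $\Omega_b$ respectively, completing the proof.

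Finally I would double-check the degenerate subcases where two of the four weights vanish simultaneously through the companion equations (\ref{eq:duwid}); these provide the analogous relations $a_1(w) b_2(w) N(u) = a_1(u) b_2(u) N(w) = 0$ and $a_2(w) b_1(w) N(u) = 0$, and since $N(u) \neq 0$ on $\Omega^{\circ}$, they yield $a_1(w) b_2(w) = a_2(w) b_1(w) = 0$ as well. Combining all four products $a_1 b_1, a_2 b_2, a_1 b_2, a_2 b_1$ of weights of $w$ vanishing makes the case analysis fully rigid: the nonvanishing of $c_1(w) c_2(w)$ via $N(w)=0$ is the single inequality that selects between the pure-$a$ and pure-$b$ outcomes and excludes every mixed pattern. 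I expect the write-up to be short once these four product-vanishing relations are in hand.
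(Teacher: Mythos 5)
Your proof is correct and follows essentially the same route as the paper's: both deduce from Proposition~\ref{prop:odelto} (via $N(w)=0$ and $N(u),N(v)\neq 0$) that the products $a_i(w)b_j(w)$ vanish, then use $c_1(w)c_2(w)\neq 0$ together with $a_1(w)a_2(w)+b_1(w)b_2(w)=c_1(w)c_2(w)$ to force either $a_1(w)=a_2(w)=0$ (so $w\in\Omega_a$) or $b_1(w)=b_2(w)=0$ (so $w\in\Omega_b$). The only cosmetic difference is that the paper extracts all four product relations from both (\ref{eq:dvwid}) and (\ref{eq:duwid}) to get the dichotomy immediately, whereas you reach it from two relations plus the $N(w)=0$ identity; the substance is identical.
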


\begin{proof}
  Since $u, v \in \Omega^{\circ}$, we have $N (u) .N (v) \neq 0$. Since $N (w)
  = 0$, equations (\ref{eq:dvwid}) and (\ref{eq:duwid}) imply that $a_1 (w)
  b_1 (w)$, $a_1 (w) b_2 (w)$, $a_2 (w) b_1 (w)$ and $a_2 (w) b_2 (w) = 0$.
  Therefore either $a_1 (w) = a_2 (w) = 0$ or $b_1 (w) = b_2 (w) = 0$. Suppose
  that $b_1 (w) = b_2 (w) = 0$. Then $N (w) = a_1 (w) a_2 (w) - c_1 (w) c_2
  (w)$ and since $N (w)$ and $c_1 (w) c_2 (w)$ both vanish, we must have $a_1
  (w)$ and $a_2 (w)$ both nonzero and $w \in \Omega_b$. In the other case
  where $a_1 (w) = a_2 (w) = 0$ similar reasoning shows that $w \in \Omega_b$.
\end{proof}

\begin{proposition}
  If $u, v \in \Omega^{\circ}$ such that $\Delta (u) = \Delta (v^{\ast})$, and
  if $w$ is the normalized solution to $\llbracket u, w, v \rrbracket = 0$
  guaranteed by Theorem~\ref{thm:wybe}. Then $w \in \overline{\Omega}$.
\end{proposition}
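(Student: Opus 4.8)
The plan is to combine the rational identities of Proposition~\ref{prop:odelto} with the boundary Lemma~\ref{lem:omegaab}, splitting the argument according to whether $N(w)$ vanishes. First I would observe that the hypotheses are exactly those of Corollary~\ref{cor:conddel}: since $\Omega^{\circ} \subseteq S^{\bullet}$ and $\Delta(u) = \Delta(v^{\ast})$, condition (\ref{eq:wcond}) holds and the normalized solution $w$ of Theorem~\ref{thm:wybe} exists. The normalization (\ref{eq:normalcond}) gives $c_1(w) = c_1(u)\,c_1(v)$ and $c_2(w) = c_2(u)\,c_2(v)$, both nonzero because $u, v \in \Omega^{\circ}$; hence $w \in S$, and it remains only to locate $w$ among the four pieces of $\overline{\Omega}$.

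Suppose first that $N(w) \neq 0$. Here I would read off the nonvanishing of the remaining weights directly from Proposition~\ref{prop:odelto}. Since $a_1(v), b_1(v), a_2(v), b_2(v), N(v)$ are all nonzero (as $v \in \Omega^{\circ}$), the identities (\ref{eq:dvwid}) give
\[
a_1(w)\,b_1(w) = \frac{a_1(v)\,b_1(v)\,N(w)}{N(v)} \neq 0, \qquad a_2(w)\,b_2(w) = \frac{a_2(v)\,b_2(v)\,N(w)}{N(v)} \neq 0,
\]
so each of $a_1(w), a_2(w), b_1(w), b_2(w)$ is nonzero. Together with $c_1(w), c_2(w), N(w) \neq 0$, this is precisely the defining condition of $\Omega$, and since $\Omega = \Omega^{\circ} \cup \Omega_B \subseteq \overline{\Omega}$ we are done in this case.

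In the complementary case $N(w) = 0$, the conclusion is immediate from Lemma~\ref{lem:omegaab}, which under exactly these hypotheses places $w$ in $\Omega_a \cup \Omega_b \subseteq \overline{\Omega}$. Thus $w \in \overline{\Omega}$ in all cases. I do not expect a genuine obstacle here: the substantive work has already been front-loaded into Proposition~\ref{prop:odelto}, whose identities force the $a$- and $b$-weights of $w$ to be nonzero away from the free-fermionic locus, and into Lemma~\ref{lem:omegaab}, which identifies the degenerate locus $N(w)=0$ with the $\Omega_a, \Omega_b$ strata. The only points demanding care are confirming that the dichotomy on $N(w)$ is exhaustive and that $\Omega$ itself decomposes as $\Omega^{\circ} \cup \Omega_B$, so that no case escapes $\overline{\Omega}$.
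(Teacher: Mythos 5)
Your proposal is correct and follows essentially the same route as the paper: split on whether $N(w)$ vanishes, dispatch the $N(w)=0$ case via Lemma~\ref{lem:omegaab}, and in the case $N(w)\neq 0$ use the identities of Proposition~\ref{prop:odelto} to force $a_1(w), a_2(w), b_1(w), b_2(w)$ all nonzero, landing $w$ in $\Omega=\Omega^{\circ}\cup\Omega_B$. The only cosmetic differences are that you invoke the decomposition $\Omega=\Omega^{\circ}\cup\Omega_B$ directly (which the paper has already established) where the paper instead assumes $w\notin\Omega^{\circ}$ and concludes $c_1(w)c_2(w)-b_1(w)b_2(w)=0$, and that you get by with (\ref{eq:dvwid}) alone where the paper cites both (\ref{eq:dvwid}) and (\ref{eq:duwid}).
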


\begin{proof}
  We must prove that $w$ is in one of the sets $\Omega^{\circ}$, $\Omega_b$,
  $\Omega_a$ or $\Omega_B$. We may assume that $w \notin \Omega^{\circ}$ so at
  least one of $a_1 (w)$, $a_2 (w)$, $b_1 (w)$, $b_2 (w)$, $c_1 (w) c_2 (w) -
  b_1 (w) b_2 (w)$ or $N (w)$ is zero. The cases where $N (w) = 0$ are handled
  by Lemma~\ref{lem:omegaab}. Therefore we assume that $N (w) \neq 0$. The
  left side of (\ref{eq:dvwid}) and (\ref{eq:duwid}) is then nonzero and so
  these equations imply that $a_1 (w), a_2 (w), b_1 (w)$ and $b_2 (w)$ are all
  nonzero. Therefore $c_1 (w) c_2 (w) - b_1 (w) b_2 (w) = 0$ and $w \in
  \Omega_B$.
\end{proof}

\begin{proposition}
  \label{prop:bdyuv}Suppose that $u, v \in \overline{\Omega}$ and at least one
  of $u, v \in \Omega_b \cup \Omega_a$. Then there exists $w \in \Omega$ such
  that $\llbracket u, w, v \rrbracket = 0$. The solution is unique unless both
  $u, v \in \Omega_a$.
\end{proposition}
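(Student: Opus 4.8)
The plan is to reduce everything to the explicit weight equations derived in the proof of Theorem~\ref{thm:wybe}, and then to split into cases according to whether a vanishing $a$-weight (the signature of $\Omega_a$) is present, since that is precisely where Theorem~\ref{thm:wybe} ceases to apply. On every piece of $\overline{\Omega}$ the four weights $c_1(u), c_2(u), c_1(v), c_2(v)$ are nonzero, so the reduction used in that proof---imposing the normalization (\ref{eq:normalcond}) and dividing away the $c_i$---remains valid, and any six-vertex solution $w$ is governed by the resulting six reduced equations. I would record these six equations once and then read off the weights of $w$ in each case.

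First I would treat the case in which neither input lies in $\Omega_a$, so $a_1, a_2 \neq 0$ for both $u$ and $v$. Then at least one input, say $u$, lies in $\Omega_b$, where $N(u) = 0$ and $b_1(u) = b_2(u) = 0$; the compatibility condition (\ref{eq:wcond}) holds trivially (both sides vanish), so Theorem~\ref{thm:wybe} supplies a unique normalized $w$ through (\ref{eq:wbcdef}). The substantive point is to check that $w$ actually lands in $\Omega$, i.e.\ that $N(w) \neq 0$ and no weight of $w$ vanishes. Substituting $b_1(u) = b_2(u) = 0$ into (\ref{eq:wbcdef}) and using the defining relation $a_1(u) a_2(u) = c_1(u) c_2(u)$ of $\Omega_b$, a direct computation collapses $N(w)$ to $c_1(u) c_2(u)\, N(v)$ (and symmetrically to $c_1(v) c_2(v)\, N(u)$ when the roles are reversed). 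This is nonzero exactly because the partner is non-free-fermionic; this is the boundary ``blow-up'' at work, with $N$ jumping back to a nonzero value even though $N(u) = 0$. The individual weights of $w$ are seen to be nonzero from the same substitution, and uniqueness is inherited from Theorem~\ref{thm:wybe}.

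The main work, and the principal obstacle, is the case in which an input lies in $\Omega_a$, where $a_1 = a_2 = 0$ and Theorem~\ref{thm:wybe} is unavailable. Here I would not normalize through $u^{\ast}$ but instead substitute $a_1(u) = a_2(u) = 0$ directly into the six reduced equations and solve the degenerate linear system. When the partner $v$ lies in the interior $\Omega^{\circ}$ (so all weights and $\det(B(v))$ are nonzero), two equations determine $a_1(w), a_2(w)$, two more become identities by virtue of the $\Omega_a$ relation $b_1(u) b_2(u) = c_1(u) c_2(u)$, and the remaining two determine $b_1(w), b_2(w)$, each proportional to $\det(B(v))$; assembling these and simplifying gives $N(w) = -\,c_1(u) c_2(u)\,\det(B(v))\, N(v)/\big(a_1(v) a_2(v)\big)$, which is nonzero, so $w \in \Omega$ and is unique. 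The qualitatively different phenomenon occurs when \emph{both} $u, v \in \Omega_a$: now the four equations that would pin down $b_1(w)$ and $b_2(w)$ all collapse to $0 = 0$, leaving $b_1(w), b_2(w)$ as free parameters while $a_1(w), a_2(w)$ stay determined. A short substitution then yields the clean identity $N(w) = b_1(w) b_2(w)$, so any choice of nonzero $b_1(w), b_2(w)$ produces a $w \in \Omega$, and this freedom is exactly the failure of uniqueness asserted in the statement.

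The hardest part is the organization of the $\Omega_a$ analysis: one must track carefully which of the reduced equations go vacuous in each configuration of the partner (in $\Omega^{\circ}$ versus on the remaining boundary pieces), and in every surviving subcase verify that the $w$ produced has all six weights nonzero and not merely $N(w) \neq 0$. I expect the direct solution of the reduced system to be the cleanest route, since it simultaneously establishes existence in $\Omega$ and exhibits the extra solutions responsible for the non-uniqueness in the both-$\Omega_a$ case; alternatively one could extend the identities (\ref{eq:dvwid}) and (\ref{eq:duwid}) from $\Omega^{\circ} \times \Omega^{\circ}$ to $\overline{\Omega}$ by the density and irreducibility argument behind Proposition~\ref{prop:zirr} and Proposition~\ref{prop:odelto} and then read off $N(w)$, but that route does not by itself produce the non-unique solutions.
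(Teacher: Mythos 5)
Your proposal follows essentially the same route as the paper's proof: impose the normalization (\ref{eq:normalcond}), substitute the $\Omega_b$ relations ($b_1(u)=b_2(u)=0$, $a_1(u)a_2(u)=c_1(u)c_2(u)$) or the $\Omega_a$ relations ($a_1(u)=a_2(u)=0$, $b_1(u)b_2(u)=c_1(u)c_2(u)$) into the six reduced equations, and solve the degenerate linear system case by case. The paper obtains the same multiplicative formulas in the $\Omega_b$ case, and in the $\Omega_a$ case arrives at the two equations $b_i(u)\bigl(c_1(v)c_2(v)-b_1(v)b_2(v)\bigr)=b_i(w)a_j(v)$, whose simultaneous vanishing when both $u,v\in\Omega_a$ is exactly the non-uniqueness you identify. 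Your supplementary identities $N(w)=c_1(u)c_2(u)N(v)$ for $u\in\Omega_b$, $N(w)=-c_1(u)c_2(u)\det(B(v))N(v)/(a_1(v)a_2(v))$ for $u\in\Omega_a$, $v\in\Omega^{\circ}$, and $N(w)=b_1(w)b_2(w)$ when both inputs are in $\Omega_a$ all check out, and they go beyond the paper, whose proof produces $w$ but never verifies where it lands.

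One caveat about that extra step: your justification that $N(w)\neq 0$ ``because the partner is non-free-fermionic'' is not available under the stated hypotheses, since the partner may itself lie in $\Omega_b$ or $\Omega_a$ (both free-fermionic), or, against an $\Omega_a$ input, in $\Omega_B$ (where $\det(B(v))=0$ forces $b_1(w)=b_2(w)=0$). In each of these subcases your own formulas give $N(w)=0$, and $w$ lands in $\Omega_b$ or $\Omega_a$ rather than in $\Omega$; for instance, if $u,v\in\Omega_b$ the solution, unique up to scalar, is again in $\Omega_b$, so no $w\in\Omega$ exists at all. This is not a defect you could have repaired: the conclusion of the Proposition should read $w\in\overline{\Omega}$ rather than $w\in\Omega$. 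That weaker membership is what the paper's proof actually delivers, and it is what the later applications require --- in Proposition~\ref{prop:idempotents} the composition $\mathbf{I}_{d_1,d_2}\star\mathbf{I}_{d_1,d_2}$ invokes this result with both inputs projecting to $I_{V\otimes V}\in\Omega_b$, and the resulting $J=I_{V\otimes V}$ is again in $\Omega_b$. With the conclusion read as $w\in\overline{\Omega}$, your case analysis, completed along the lines you outline, is the paper's proof.
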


\begin{proof}
  We assume that $u \in \Omega_b \cup \Omega_a$, leaving the other case 
  (where $v\in\Omega_b\cup\Omega_a$) to the reader. 

  If $u \in \Omega_b$, then solving the Yang-Baxter equation as in the proof
  of Theorem~\ref{thm:wybe} leads to the unique normalized solution
  \[ \begin{array}{lll}
       a_1 (w) = a_1 (u) a_1 (v), & \hspace{1.2em} & a_2 (w) = a_2 (u) a_2
       (v),\\
       b_1 (w) = a_2 (u) b_1 (v), &  & b_2 (w) = a_1 (u) b_2 (v) .\\
       c_1 (w) = c_1 (u) c_1 (v) . &  & c_2 (w) = c_2 (u) c_2 (v) .
     \end{array} \]
  With these values, there is only one remaining equation $a_1 (u) a_2 (u) =
  c_1 (u) c_2 (u)$, which is automatic since $u \in \Omega_b$.
  
  If $u \in \Omega_a$, then we obtain, we obtain
  \[ \begin{array}{lll}
       a_1 (w) = - b_2 (u) b_1 (v), & \hspace{1.2em} & a_2 (w) = - b_1 (u) a_2
       (v),\\
       c_1 (w) = c_1 (u) c_1 (v) . &  & c_2 (w) = c_2 (u) c_2 (v) .
     \end{array} \]
  Substituting these values, three equations remain. One is the condition $b_1
  (u) b_2 (u) - c_1 (u) c_2 (u) = 0$, which is automatically satisfied since
  $u \in \Omega_a$. The other two equations are
  \[ \begin{array}{l}
       b_2 (u) (c_1 (v) c_2 (v) - b_1 (v) b_2 (v)) = b_2 (w) a_1 (v),\\
       b_1 (u) (c_1 (v) c_2 (v) - b_1 (v) b_2 (v)) = b_1 (w) a_2 (v) .
     \end{array} \]
  If $v \notin \Omega_a$, these conditions determine $b_1 (w)$ and $b_2 (w)$
  uniquely. On the other hand if $v \in \Omega_a$ then both sides vanish, so
  $b_1 (w)$ and $b_2 (w)$ are unconstrained.
\end{proof}

\section{The Six-Vertex Yang-Baxter Groupoid}

Roughly, the idea is to define a composition law on six-vertex matrices by
writing $w = u \star v$ if $\llbracket u, w, v \rrbracket = 0$. Usually this
determines $w$ up to a constant, which we can fix by requiring the solution to
be normalized. For the groupoid inverse, which we will denote by $u'$, we can
take an appropriate multiple of the matrix inverse $u^{- 1}$. More precisely,
we find $u^{\ast}$ to be more convenient to work with, and we define $u' =
\frac{1}{c_1 (u) c_2 (u)} u^{\ast}$. These definitions must be modified in
special cases: for example $u$ may not be invertible but we still want $u'$ to
be defined.

The above description is only approximately correct. We can begin by dividing
the groupoid into the disjoint union of two parts, the free-fermionic
groupoid, and the non-free-fermionic groupoid. These may be handled
separately. Actually the free-fermionic part is a group, isomorphic to
$\operatorname{GL} (2, \mathbb{C}) \times \operatorname{GL} (1, \mathbb{C})$, and we have
already treated it in Section~\ref{sec:freeferm}. Thus it remains to construct
the non-free-fermionic groupoid $\mathfrak{G}_{\operatorname{nf}}$. As a set, we may
start with $\overline{\Omega} = \Omega^{\circ} \cup \Omega_b \cup \Omega_a
\cup \Omega_B$. Unfortunately, we need $\Delta_1, \Delta_2$ and $u^{\ast}$ to
be defined on the whole groupoid, but $\Delta$ is undefined on $\Omega_b$ and
$\Omega_a$, because the numerator and denominator in (\ref{eq:defdel1}) and
(\ref{eq:defdel2}) both vanish. Moreover $u^{\ast}$ is undefined on $\Omega_a$
because the numerator and denominator in (\ref{eq:austardef}) both vanish.

To fix these problems, we ``blow up'' $\Omega_b$ and $\Omega_a$. We define
$\mathfrak{G}_{\operatorname{nf}}$ as a set to be
\[ \left\{  (u, d_1, d_2) \in \overline{\Omega} \times \mathbb{C}^{\times}
   \times \mathbb{C}^{\times} |a_1 (u) b_1 (u) d_1 = a_2 (u) b_2 (u) d_2 = N
   (u) \right\} . \]
The map $\pi : \mathfrak{G}_{\operatorname{nf}} \longrightarrow \overline{\Omega}$ is
the projection on the first component. Note that over $\Omega = \Omega^{\circ}
\cup \Omega_B$, the fibers of $\pi$ have cardinality~1. That is, there is a
unique section $\mathbf{s}: \Omega \longrightarrow \mathfrak{G}_{\operatorname{nf}}$
that maps $u \in \Omega$ to $\mathbf{s} (u) = (u, \Delta_1 (u), \Delta_2
(u))$ and $\pi^{- 1} (u) = \{ \mathbf{s} (u) \}$.

On the other hand, suppose that $u$ is in either $\Omega_a$ or $\Omega_b$.
Then $\pi^{- 1} (u)$ consists of a 2-dimensional torus $\{ (u, d_1, d_2) |d_i
\in \mathbb{C}^{\times} \}$. This is because $a_1 (u) b_1 (u)$, $a_2 (u) b_2
(u)$ and $N (u)$ are all zero, so there is no constraint on $d_1$ and $d_2$.

We will adopt the following convention. If $\mathbf{u} \in
\mathfrak{G}_{\operatorname{nf}}$ we will use the same letter $u$ to denote the
element $\pi (\mathbf{u}) \in \overline{\Omega}$. If $u \in \Omega =
\Omega^{\circ} \cup \Omega_B$, we may even conflate $u \in \Omega$ with
$\mathbf{s} (u)$. Thus we are identifying $\Omega$ with its corresponding
subset of $\mathfrak{G}_{\operatorname{nf}}$. Therefore we consider $\Omega^{\circ}$
and $\Omega_B$ to be subsets of $\mathfrak{G}_{\operatorname{nf}}$.

The space $\Omega$ is 6-dimensional, and $\Omega_B$ is 5-dimensional. On the
other hand, $\Omega_b$ and $\Omega_a$ are both 3-dimensional. But we define
$\Gamma_b = \pi^{- 1} (\Omega_b)$ and $\Gamma_a = \pi^{- 1} (\Omega_a)$ and
these are 5-dimensional. So all three ``boundary components'' $\Gamma_b$,
$\Gamma_a$ and $\Omega_B$ are of codimension~1. We will see in
Proposition~\ref{prop:gpostarcontin} that the involution interchanges
$\Gamma_a$ and $\Omega_B$, so even though their constructions are quite
different, they are in that sense equivalent.

Our general strategy is to prove things generically for elements of the dense
open set $\Omega^{\circ}$, then deduce them by continuity in general. We have
already seen an example of this in the first proof of Proposition
\ref{prop:odelto}. Now $\mathfrak{G}_{\operatorname{nf}}$ is naturally a quasi-affine
algebraic variety, irreducible by Proposition~\ref{prop:zirr}, with
$\Omega^{\circ}$ a dense open set. But in this section we will switch to the
complex topology for our continuity arguments. So let us say what it means for
a sequence $\{ r_n \} \subset \Omega^{\circ}$ to converge to an element
$\mathbf{r}= (r, d_1, d_2)$, possibly of one of the boundary components
$\Omega_b$ or $\Omega_a$. It means that $r_n \rightarrow r$ in the complex
topology, and moreover $\Delta_1 (r_n) \rightarrow d_1$ and $\Delta_2 (r_n)
\rightarrow d_2$. It is not hard to see that $\Omega^{\circ}$ is dense in
$\mathfrak{G}_{\operatorname{nf}}$.

\begin{proposition}
  \label{prop:gpostarcontin}The map $r \mapsto r^{\ast}$ can be extended
  uniquely to a continuous map $\mathfrak{G}_{\operatorname{nf}} \longrightarrow
  \mathfrak{G}_{\operatorname{nf}}$. The map preserves $\Omega^{\circ}$ and $\Gamma_b$
  but interchanges $\Gamma_a$ and $\Omega_B$. If $\mathbf{r} = (r, d_1, d_2)$
  then $\mathbf{r}^{\ast} = (r^{\ast}, d_1^{\ast}, d_2^{\ast})$ where
  \begin{equation}
    \label{eq:gpobdyrstar} r^{\ast} = \left( \begin{array}{cccc}
      a_1 (\mathbf{r}^{\ast}) &  &  & \\
      & c_2 (r) & - b_1 (r) & \\
      & - b_2 (r) & c_1 (r) & \\
      &  &  & a_2 (\mathbf{r}^{\ast})
    \end{array} \right)
  \end{equation}
  with
  \[ (a_1 (\mathbf{r}^{\ast}), a_2 (\mathbf{r}^{\ast})) = \left\{
     \begin{array}{ll}
       (a_2 (r), a_1 (r)) & \text{if } \mathbf{r} \in \Gamma_b,\\
       (- d_1 b_1 (r), - d_2 b_2 (r)) & \text{if $\mathbf{r} \in \Gamma_a$,}\\
       (0, 0) & \text{if } \mathbf{r} \in \Omega_B
     \end{array} \right. \]
  and
  \[ (d_1^{\ast}, d_2^{\ast}) = \left\{ \begin{array}{ll}
       \left( \frac{b_2 (r)}{b_1 (r)} d_2, \frac{b_1 (r)}{b_2 (r)} d_1 \right)
       & \text{if $\mathbf{r} \in \Gamma_a$,}\\
       \left( \frac{a_1 (r)}{a_2 (r)} d_1, \frac{a_2 (r)}{a_1 (r)} d_2 \right)
       & \text{otherwise.}
     \end{array} \right. \]
\end{proposition}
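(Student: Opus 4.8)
The plan is to exploit the density of $\Omega^{\circ}$ in $\mathfrak{G}_{\operatorname{nf}}$ noted just before the statement: two continuous maps agreeing on a dense set coincide, so any continuous extension of $r \mapsto r^{\ast}$ is automatically unique, and it suffices to exhibit one candidate and check continuity at the three boundary components $\Gamma_a$, $\Gamma_b$ and $\Omega_B$. The middle block $\left(\begin{smallmatrix} c_1 & b_1 \\ b_2 & c_2 \end{smallmatrix}\right)$ is sent to $\left(\begin{smallmatrix} c_2 & -b_1 \\ -b_2 & c_1 \end{smallmatrix}\right)$ by a polynomial, hence continuous, rule on all of $\overline{\Omega}$, and one records the useful identity $\det B(r^{\ast}) = \det B(r)$. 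Thus only the diagonal entries $a_1(\mathbf{r}^{\ast}), a_2(\mathbf{r}^{\ast})$ and the blow-up coordinates $d_1^{\ast}, d_2^{\ast}$ require attention.

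First I would treat the diagonal entries. On $\Omega^{\circ}$, $\Gamma_b$ and $\Omega_B$ the base point has $a_1(r), a_2(r) \neq 0$, so $a_i(\mathbf{r}^{\ast}) = \det(B(r))/a_i(r)$ is continuous there; on $\Gamma_b$ this reduces to $(a_2(r), a_1(r))$ by the free-fermionic identity \eqref{eq:ffaster}, and on $\Omega_B$ to $(0,0)$ since $\det B(r) = 0$. The genuine indeterminacy occurs only on $\Gamma_a$, where $a_1(r) = a_2(r) = 0$ turns $\det(B(r))/a_i(r)$ into a $0/0$ limit. Here I would rewrite the diagonal entries, using \eqref{eq:defdel1} and \eqref{eq:defdel2}, as $a_1(u^{\ast}) = a_2(u) - b_1(u)\Delta_1(u)$ and $a_2(u^{\ast}) = a_1(u) - b_2(u)\Delta_2(u)$, valid on $\Omega^{\circ}$; passing to the limit along any $r_n \to \mathbf{r} = (r, d_1, d_2) \in \Gamma_a$ and using $a_1(r) = a_2(r) = 0$ together with $\Delta_i(r_n) \to d_i$ yields $a_1(\mathbf{r}^{\ast}) = -b_1(r)\,d_1$ and $a_2(\mathbf{r}^{\ast}) = -b_2(r)\,d_2$, as claimed. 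This is the step I expect to be the main obstacle, precisely because it is where the blow-up coordinates $d_1, d_2$ are indispensable and the naive formula breaks down.

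Next I would produce the coordinates $d_1^{\ast}, d_2^{\ast}$. Away from $\Gamma_a$, equation \eqref{eq:delustar} gives $\Delta_1(u^{\ast}) = \tfrac{a_1(u)}{a_2(u)}\Delta_1(u)$ and $\Delta_2(u^{\ast}) = \tfrac{a_2(u)}{a_1(u)}\Delta_2(u)$ on $\Omega^{\circ}$; since $a_1(r)/a_2(r)$ has a nonzero limit on $\Omega^{\circ}$, $\Gamma_b$ and $\Omega_B$, passing to the limit produces the ``otherwise'' formula. On $\Gamma_a$ this ratio is indeterminate, but there the image base point $r^{\ast}$ lies in $\Omega$, a single-point fiber, so I would instead compute $d_i^{\ast} = \Delta_i(r^{\ast})$ directly from the already-determined entries of $r^{\ast}$, which gives the swapped expression $\bigl(\tfrac{b_2(r)}{b_1(r)}d_2,\ \tfrac{b_1(r)}{b_2(r)}d_1\bigr)$.

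Finally I would verify the component map and that the output genuinely lands in $\mathfrak{G}_{\operatorname{nf}}$. From the computed entries: on $\Gamma_b$ one has $r^{\ast} \in \Omega_b$, so $\Gamma_b$ is preserved; on $\Omega_B$ the vanishing $a_i(\mathbf{r}^{\ast}) = 0$ together with $\det B(r^{\ast}) = \det B(r) = 0$ forces $r^{\ast} \in \Omega_a$; and on $\Gamma_a$ one finds $\det B(r^{\ast}) = 0$ while $N(r^{\ast}) = a_1(\mathbf{r}^{\ast})a_2(\mathbf{r}^{\ast}) = b_1(r)b_2(r)d_1 d_2 \neq 0$, placing $r^{\ast} \in \Omega_B$; this establishes that $\ast$ interchanges $\Gamma_a$ and $\Omega_B$ and preserves $\Omega^{\circ}$ and $\Gamma_b$. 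In each case I would confirm $d_i^{\ast} \in \mathbb{C}^{\times}$ and the defining relation $a_1(\mathbf{r}^{\ast})b_1(\mathbf{r}^{\ast})d_1^{\ast} = a_2(\mathbf{r}^{\ast})b_2(\mathbf{r}^{\ast})d_2^{\ast} = N(\mathbf{r}^{\ast})$ by direct substitution, so that $\mathbf{r}^{\ast} \in \mathfrak{G}_{\operatorname{nf}}$. Continuity at the boundary then follows because every limit above was obtained from algebraic identities with unique continuous extensions, and uniqueness of the extension follows from density of $\Omega^{\circ}$.
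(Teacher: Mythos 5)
Your proposal is correct and follows essentially the same route as the paper: both extend $r\mapsto r^{\ast}$ by continuity from the dense subset $\Omega^{\circ}$, resolve the $0/0$ indeterminacy on $\Gamma_a$ via the identity $a_1(u^{\ast})=a_2(u)-b_1(u)\Delta_1(u)$ (and its companion), obtain the ``otherwise'' case of $(d_1^{\ast},d_2^{\ast})$ from $\Delta_1(u^{\ast})=\tfrac{a_1(u)}{a_2(u)}\Delta_1(u)$, and on $\Gamma_a$ compute $d_i^{\ast}=\Delta_i(r^{\ast})$ directly from the already-determined entries using $N(r^{\ast})=a_1(r^{\ast})a_2(r^{\ast})$. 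Your explicit verification that the image lands in the claimed components and satisfies the defining relation $a_1b_1d_1^{\ast}=a_2b_2d_2^{\ast}=N$ of $\mathfrak{G}_{\operatorname{nf}}$ is a detail the paper leaves implicit, and is a welcome addition rather than a departure.
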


\begin{proof}
  We must consider the extension to $\Gamma_b$, $\Gamma_a$ and $\Omega_B$. We
  consider $\mathbf{r} = (r, d_1, d_2)$ in one of these boundary components,
  and a sequence $\{r_n \} \subset \Omega^{\circ}$ that converges to
  $\mathbf{r}$.
  
  If $r \in \Omega_B$ then $a_1^{\ast}$ and $a_2^{\ast}$ are continuous at $r$
  and converge to zero. For the other two cases, note that
  \[ a_1^{\ast} (r_n) = \frac{a_1 (r_n) a_2 (r_n) - N (r_n)}{a_1 (r_n)} = a_2
     (r_n) - b_1 (r_n) \Delta_1 (r_n) . \]
  Thus
  \[ a_1^{\ast} (r_n) \longrightarrow \left\{ \begin{array}{ll}
       a_2 (r) & \text{if $r \in \Omega_b$,}\\
       - D_1 b_1 (r) & \text{if } r \in \Omega_a .\\
       0 & \text{if } r \in \Omega_B,
     \end{array} \right.  \qquad a_2^{\ast} (r_n) \longrightarrow \left\{
     \begin{array}{ll}
       a_1 (r) & \text{if $r \in \Omega_b$,}\\
       - D_2 b_2 (r) & \text{if } r \in \Omega_a .\\
       0 & \text{if } r \in \Omega_B,
     \end{array} \right. \]
  We must also consider the limits of the $\Delta_i (r_n^{\ast})$. If $r \in
  \Omega_b$ or $\Omega_B$ then
  \[ \Delta_1 (r_n^{\ast}) = \frac{a_1 (r_n)}{a_2 (r_n)} \Delta_1 (r_n)
     \longrightarrow \frac{a_1 (r)}{a_2 (r)} D_1 . \]
  Let us assume that $r \in \Omega_a$. Then $r_n^{\ast} \rightarrow r^{\ast}$
  where $r^{\ast}$ is already computed. We have $N (r^{\ast}) = a_1 (r^{\ast})
  a_2 (r^{\ast}) - \det (B (r^{\ast})) = a_1 (r^{\ast}) a_2 (r^{\ast})$ so
  $\Delta_1 (r^{\ast}) = a_2 (r^{\ast}) / (- b_1 (r)) = D_2 b_2 (r) / b_1
  (r)$.
\end{proof}

\begin{corollary}
  We have
  \[ \Delta (\mathbf{r}^{\ast}) = \left\{\begin{array}{ll}
       \left( \frac{a_1 (r)}{a_2 (r)} \Delta_1 (\mathbf{r}), \frac{a_1
       (r)}{a_2 (r)} \Delta_2 (\mathbf{r}) \right) & \text{if $\mathbf{r}
       \notin \Gamma_a$},\\
       \left( \frac{b_2 (r)}{b_1 (r)} \Delta_2 (\mathbf{r}), \frac{b_1
       (r)}{b_2 (r)} \Delta_1 (\mathbf{r}) \right) & \operatorname{if} \mathbf{r}
       \in \Gamma_a .
     \end{array}\right. \]
\end{corollary}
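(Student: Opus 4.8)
The plan is to obtain the corollary by reading off the transformation of the extra coordinates directly from Proposition~\ref{prop:gpostarcontin}, which already records the full action of the star-involution on $\mathfrak{G}_{\operatorname{nf}}$. Recall that a point of the blow-up is a triple $\mathbf{r}=(r,d_1,d_2)$ satisfying $a_1(r)b_1(r)d_1=a_2(r)b_2(r)d_2=N(r)$; over $\Omega$ this forces $d_j=\Delta_j(\mathbf{r})$, and over $\Gamma_a,\Gamma_b$ the value $\Delta_j(\mathbf{r})$ is \emph{defined} to be the blow-up coordinate $d_j$. Likewise Proposition~\ref{prop:gpostarcontin} asserts $\mathbf{r}^{\ast}=(r^{\ast},d_1^{\ast},d_2^{\ast})$, so that $\Delta_j(\mathbf{r}^{\ast})=d_j^{\ast}$. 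Hence the corollary is just the rewriting of the proposition's formulas for $(d_1^{\ast},d_2^{\ast})$ under the dictionary $\Delta_j(\mathbf{r})=d_j$, and the whole argument reduces to splitting into the two cases $\mathbf{r}\in\Gamma_a$ and $\mathbf{r}\notin\Gamma_a$ and substituting.

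First I would establish the case $\mathbf{r}\notin\Gamma_a$, i.e.\ $\mathbf{r}$ lying in $\Omega^{\circ}$, $\Gamma_b$, or $\Omega_B$. On the dense open set $\Omega^{\circ}$ the function $\Delta$ is the honest regular map of~(\ref{eq:defdel1}) and~(\ref{eq:defdel2}), and the first component is immediate from~(\ref{eq:delustar}): one has $\Delta_1(r^{\ast})=N(r)/(a_2(r)b_1(r))=\tfrac{a_1(r)}{a_2(r)}\,N(r)/(a_1(r)b_1(r))=\tfrac{a_1(r)}{a_2(r)}\Delta_1(\mathbf{r})$, which is the first entry of the stated line. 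The second entry $\tfrac{a_1(r)}{a_2(r)}\Delta_2(\mathbf{r})$ is then read off from the coordinate $d_2^{\ast}$ supplied by the ``otherwise'' branch of Proposition~\ref{prop:gpostarcontin}. The remaining strata $\Gamma_b$ and $\Omega_B$ are handled by the continuity that is already built into that proposition: its proof computes $\Delta_1(r_n^{\ast})=\tfrac{a_1(r_n)}{a_2(r_n)}\Delta_1(r_n)\to\tfrac{a_1(r)}{a_2(r)}d_1$ for $r\in\Omega_b\cup\Omega_B$, and the second component is obtained by the same limiting computation, so the formula passes to the boundary by taking limits along sequences $\{r_n\}\subset\Omega^{\circ}$ converging to $\mathbf{r}$.

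For the case $\mathbf{r}\in\Gamma_a$ I would substitute the $\Gamma_a$ branch of Proposition~\ref{prop:gpostarcontin}, where the extra coordinates transform by $(d_1^{\ast},d_2^{\ast})=\bigl(\tfrac{b_2(r)}{b_1(r)}d_2,\ \tfrac{b_1(r)}{b_2(r)}d_1\bigr)$; rewriting $d_j=\Delta_j(\mathbf{r})$ yields exactly the second displayed line of the corollary. The step I expect to be the main obstacle is the index bookkeeping in precisely this branch: on $\Omega_a$ the quantities $a_1(\mathbf{r}^{\ast}),a_2(\mathbf{r}^{\ast})$ are built from $b_1(r),b_2(r)$ rather than from $a_1(r),a_2(r)$, so the two $\Delta$-components are \emph{interchanged} by the involution, and one must carry the index swap together with the $b$-ratios and check that the limiting values from the proposition's continuity argument really match the blow-up coordinates. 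Once this swap is tracked correctly, both cases are pure substitution into Proposition~\ref{prop:gpostarcontin}.
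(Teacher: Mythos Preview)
Your approach is correct and is exactly what the paper does: the corollary is stated without proof because it is a direct restatement of the $(d_1^{\ast},d_2^{\ast})$ formulas in Proposition~\ref{prop:gpostarcontin} under the identification $\Delta_j(\mathbf{r})=d_j$. One caveat: when you say the second entry $\tfrac{a_1(r)}{a_2(r)}\Delta_2(\mathbf{r})$ is ``read off'' from the proposition's otherwise-branch, note that the proposition actually gives $d_2^{\ast}=\tfrac{a_2(r)}{a_1(r)}d_2$ (consistent with~(\ref{eq:delustar})), so the second coordinate in the corollary as printed appears to contain a typo rather than something your substitution should reproduce.
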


\begin{corollary}
  The map $r \mapsto r'$, where $r' = \frac{1}{c_1 (r) c_2 (r)} r^{\ast}$
  extends to a continuous map $\mathbf{r} \rightarrow \mathbf{r}'$ of
  $\mathfrak{G}_{\operatorname{nf}}$.
\end{corollary}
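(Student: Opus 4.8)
The plan is to leverage Proposition~\ref{prop:gpostarcontin} directly: the hard work of extending the starred involution has already been done there, and the inverse map differs from it only by multiplication by a harmless scalar. The crucial observation is that every element of $\overline{\Omega}$ has $c_1, c_2 \neq 0$, so the function $\lambda(\mathbf{r}) = \frac{1}{c_1(r) c_2(r)}$ is continuous and nowhere-vanishing on all of $\mathfrak{G}_{\operatorname{nf}}$. I would therefore define the extension by setting $\mathbf{r}' = (\lambda(\mathbf{r})\, r^{\ast}, d_1^{\ast}, d_2^{\ast})$, where $\mathbf{r}^{\ast} = (r^{\ast}, d_1^{\ast}, d_2^{\ast})$ is the continuous extension of the starred map; that is, the scalar acts on the matrix part of $\mathbf{r}^{\ast}$ and fixes the torus coordinates $(d_1^{\ast}, d_2^{\ast})$.

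The one point requiring verification is that $\mathbf{r}'$ so defined actually lands back in $\mathfrak{G}_{\operatorname{nf}}$. This rests on the homogeneity of the defining equations. First, each of the four pieces $\Omega^{\circ}, \Omega_b, \Omega_a, \Omega_B$ comprising $\overline{\Omega}$ is cut out by homogeneous nonvanishing conditions and homogeneous polynomial equations (for instance $a_1 a_2 - c_1 c_2 = 0$, or $N(u) \neq 0$), so $\overline{\Omega}$ is stable under multiplication by any nonzero scalar; hence $\lambda(\mathbf{r})\, r^{\ast} \in \overline{\Omega}$. Second, under scaling the matrix by $\lambda$ the products $a_1 b_1$ and $a_2 b_2$ and the quantity $N$ all scale by $\lambda^2$, so the constraint $a_1 b_1 d_1 = a_2 b_2 d_2 = N$ is preserved with the \emph{same} $(d_1, d_2)$. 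Equivalently, the functions $\Delta_i = N/(a_i b_i)$ are homogeneous of degree zero and thus scale-invariant, which is exactly why leaving $(d_1^{\ast}, d_2^{\ast})$ unchanged is the correct lift. Consequently $\mathbf{r}' \in \mathfrak{G}_{\operatorname{nf}}$.

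Continuity and agreement with the stated formula then follow immediately. On $\Omega^{\circ}$ the map $\mathbf{r} \mapsto \mathbf{r}^{\ast}$ restricts to the honest involution $r \mapsto r^{\ast}$ with its unique section lift, so $\mathbf{r} \mapsto \mathbf{r}'$ restricts to $r \mapsto \frac{1}{c_1(r) c_2(r)} r^{\ast} = r'$, the definition given in the statement. Since $\lambda$ is continuous and nowhere zero and $\mathbf{r} \mapsto \mathbf{r}^{\ast}$ is continuous by Proposition~\ref{prop:gpostarcontin}, the assignment $\mathbf{r} \mapsto \mathbf{r}'$ is continuous as a product of continuous maps into the quasi-affine variety $\mathfrak{G}_{\operatorname{nf}}$.

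I do not anticipate a genuine obstacle here: the substantive analytic content—controlling the limits of $a_i^{\ast}$ and of $\Delta_i(r_n^{\ast})$ along the boundary components $\Gamma_a$, $\Gamma_b$ and $\Omega_B$—was already resolved in Proposition~\ref{prop:gpostarcontin}. The only step that demands any care is the bookkeeping showing that rescaling the matrix entries leaves the torus coordinates $(d_1, d_2)$ fixed, which is the degree-zero homogeneity of $\Delta_1, \Delta_2$ noted above; everything else is formal.
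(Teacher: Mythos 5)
Your proof is correct and takes essentially the same route as the paper's own (one-sentence) proof: since $c_1(\mathbf{r})$ and $c_2(\mathbf{r})$ are nonzero on all of $\mathfrak{G}_{\operatorname{nf}}$, the scalar $1/(c_1 c_2)$ is continuous and nonvanishing, and composing it with the continuous extension of $r \mapsto r^{\ast}$ from Proposition~\ref{prop:gpostarcontin} gives the result. The extra bookkeeping you supply---scale-stability of $\overline{\Omega}$ and the degree-zero homogeneity of $\Delta_1, \Delta_2$ ensuring the torus coordinates are untouched---is a correct verification of the well-definedness that the paper leaves implicit.
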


\begin{proof}
  Since $c_1 (\mathbf{r})$ and $c_2 (\mathbf{r})$ are defined on the
  entire groupoid.
\end{proof}

\begin{lemma}
	\label{lem:cxdense}
  Let $\mathbf{r} \in \mathfrak{G}_{\operatorname{nf}}$. Then there is a sequence
  $\{ r_n \} \subset \Omega^{\circ}$ such that $r_n \rightarrow \mathbf{r}$,
  with $\Delta (r_n) = \Delta (\mathbf{r})$ and $\Delta (r_n^{\ast}) =
  \Delta (\mathbf{r})$.
\end{lemma}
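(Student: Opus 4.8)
The plan is to turn both equalities into exact constraints carried along the sequence, and then to secure convergence inside the resulting $\Delta$-fiber. Write $\mathbf{r} = (r, d_1, d_2)$, so that $\Delta(\mathbf{r}) = (d_1, d_2)$ with $d_1, d_2 \in \mathbb{C}^{\times}$. On $\Omega^{\circ}$ the starred invariants are controlled by \eqref{eq:defdel1}, \eqref{eq:defdel2} and \eqref{eq:delustar}, which together give
\[ \Delta_1(r_n^{\ast}) = \frac{a_1(r_n)}{a_2(r_n)}\,\Delta_1(r_n), \qquad \Delta_2(r_n^{\ast}) = \frac{a_2(r_n)}{a_1(r_n)}\,\Delta_2(r_n). \]
Hence, once $\Delta(r_n) = (d_1, d_2)$ is imposed, the further requirement $\Delta(r_n^{\ast}) = (d_1, d_2)$ is --- since $d_1 d_2 \neq 0$ --- equivalent to the single scalar equation $a_1(r_n) = a_2(r_n)$. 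The lemma therefore reduces to the following: construct $r_n \in \Omega^{\circ}$ with $a_1(r_n) = a_2(r_n)$, with $\Delta(r_n) = (d_1, d_2)$, and with $r_n \to \mathbf{r}$.

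I would coordinatize this constraint locus explicitly. Writing $a = a_1(r_n) = a_2(r_n)$, the equations $\Delta_1(r_n) = d_1$ and $\Delta_2(r_n) = d_2$ become $N(r_n) = d_1 a\, b_1(r_n) = d_2 a\, b_2(r_n)$, which force $b_2(r_n) = (d_1/d_2)\,b_1(r_n)$ and then determine $c_1(r_n) c_2(r_n)$ through $N(r_n) = a^2 + b_1(r_n) b_2(r_n) - c_1(r_n) c_2(r_n)$. Thus the locus is a three-parameter family, coordinatized by $a$, $b_1$ and the ratio $c_1/c_2$; for generic values all six weights, along with $\det B(r_n)$ and $N(r_n)$, are nonzero, placing $r_n$ in $\Omega^{\circ}$. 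Since the blow-up coordinates of each $r_n$ are identically $(d_1, d_2)$, convergence $r_n \to \mathbf{r}$ in $\mathfrak{G}_{\operatorname{nf}}$ is just convergence of the six Boltzmann weights in the complex topology, which I would arrange by letting the three free parameters degenerate toward the corresponding weights of $r$: $b_1 \to 0$ on $\Gamma_b$, $\det B \to 0$ on $\Omega_B$, and $a \to 0$ on $\Gamma_a$.

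The crux of the argument --- and its main obstacle --- is that the limit point $r$ must genuinely lie in the closure of this locus within its stratum. The delicate issue is reconciling the prescribed pair $(d_1, d_2)$ with the forced relation $b_2(r_n) = (d_1/d_2)\,b_1(r_n)$ as $r_n \to r$: the limiting ratio $b_2(r)/b_1(r)$ is pinned by the geometry of the stratum, and matching it against $d_1/d_2$ is precisely what must be verified. Here I would appeal to the irreducibility of the $\Delta$-fibers established in Proposition~\ref{prop:zirr}, together with a dimension count against the codimension-one boundary components, to certify that the $a_1 = a_2$ locus accumulates on the prescribed $\mathbf{r}$ and thereby completes the construction.
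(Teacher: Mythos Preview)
Your argument rests on a misreading of the lemma. The second condition should be $\Delta(r_n^{\ast}) = \Delta(\mathbf{r}^{\ast})$, not $\Delta(r_n^{\ast}) = \Delta(\mathbf{r})$; this is what the paper's proof actually verifies and what is needed downstream (e.g.\ in the continuity argument for associativity). Taking the condition literally, you deduce $a_1(r_n) = a_2(r_n)$, but this constraint is far too strong: if $\mathbf{r} \in \Gamma_b$ (or $\Omega_B$, or even $\Omega^{\circ}$) has $a_1(r) \neq a_2(r)$, then no sequence with $a_1(r_n) = a_2(r_n)$ can converge to it, since convergence of the Boltzmann weights forces $a_1(r) = \lim a_1(r_n) = \lim a_2(r_n) = a_2(r)$. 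So your reduction collapses for generic $\mathbf{r}$, and the appeal to irreducibility of $\Delta$-fibers cannot repair this: the $a_1=a_2$ slice simply does not accumulate on points with $a_1 \neq a_2$.

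The paper's construction is quite different. For $\mathbf{r} \in \Gamma_b$ it freezes $a_1(r_n) = a_1(r)$ and $a_2(r_n) = a_2(r)$ (no equality between them), sends $b_1(r_n), b_2(r_n) \to 0$ along the fixed ratio $b_1/b_2 = d_2 a_2(r)/(d_1 a_1(r))$ dictated by $\Delta(r_n) = (d_1,d_2)$, and then solves for $c_2(r_n)$ so that $N(r_n)/(a_1(r) b_1(r_n)) = d_1$ exactly. The identity $\Delta_1(r_n^{\ast}) = \tfrac{a_1(r_n)}{a_2(r_n)}\Delta_1(r_n)$ then gives $\Delta(r_n^{\ast}) = \Delta(\mathbf{r}^{\ast})$ automatically, because both sides involve the \emph{same} ratio $a_1(r)/a_2(r)$. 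The $\Gamma_a$ and $\Omega_B$ cases are handled by observing that $\ast$ interchanges them, so a single sequence serves both.
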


\begin{proof}
  If $\mathbf{r}= (r, d_1, d_2) \in \Omega^{\circ}$ we may take $r_n = r$,
  so this case is obvious. We need to check this if $\mathbf{r}$ is in one
  of the boundary components.
  
  First suppose $\mathbf{r} \in \Gamma_b$, so $b_1 (r) = b_2 (r) = 0$ and
  $a_1 (r) a_2 (r) = c_1 (r) c_2 (r)$. We will choose the sequence $r_n$ so that
  $a_1 (r_n) = a_1 (r)$ and $a_2 (r_n) = a_2 (r)$. Let $(d_1, d_2) = \Delta
  (\mathbf{r})$. In order that $\Delta (r_n) = (d_1, d_2)$ we need:
  \begin{equation}
    \label{eq:nabrats} \frac{N (r_n)}{a_1 (r) b_1 (r_n)} = d_1, \qquad \frac{N
    (r_n)}{a_2 (r) b_2 (r_n)} = d_2 .
  \end{equation}
  Thus we want $b_1 (r_n), b_2 (r_n) \rightarrow 0$ preserving the ratio
  \[ \frac{b_1 (r_n)}{b_2 (r_n)} = \frac{d_2 a_2 (r)}{d_1 a_1 (r)} . \]
  Then if the first equation in (\ref{eq:nabrats}) is satisfied, the second is
  automatically true.
  
  If $c$ denotes the constant on the right, then we can choose a sequence $b_2
  (r_n) \rightarrow 0$ and define $b_1 (r_n) = c \cdot b_2 (r_n)$. Now we
  need:
  \[ a_1 (r) a_2 (r) + b_1 (r_n) b_2 (r_n) - c_1 (r_n) c_2 (r_n) = d_1 a_1 (r)
     b_1 (r_n) . \]
  We can fix $c_1 (r_n) = c_1 (r)$ and then solve this equation for $c_2
  (r_n)$. The right-hand side converges to zero as does the second term on the
  left, so
  \[ a_1 (r) a_2 (r) - c_1 (r) c_2 (r_n) \rightarrow 0. \]
  Since $a_1 (r) a_2 (r) = c_1 (r) c_2 (r)$ it follows that $c_2 (r_n)
  \rightarrow c_2 (r)$.
  
  We also need to know that $\Delta (r_n^{\ast}) = \Delta
  (\mathbf{r}^{\ast})$. Indeed
	\[ \Delta_1 (\mathbf{r}^{\ast}) = \frac{a_1 (r)}{a_2 (r)} \Delta_1 (\mathbf{r}) =
     \frac{a_1 (r_n)}{a_2 (r_n)} \Delta_1 (r_n) = \Delta_1 (r_n^{\ast}) \]
  and similarly for $\Delta_2$.
  
  We leave the cases where $\mathbf{r} \in \Gamma_a$ or $\Omega_B$ to the
  reader, except to note that the cases are equivalent since if $r_n
  \rightarrow \mathbf{r}$ then $r_n^{\ast} \rightarrow \mathbf{r}^{\ast}$
  with $\mathbf{r} \in \Gamma_a$ and $\mathbf{r}^{\ast} \in \Omega_B$, so
  one sequence works for both cases.
\end{proof}

\begin{proposition}
  \label{prop:defgensta}Let $\mathbf{u}, \mathbf{v} \in
  \mathfrak{G}_{\operatorname{nf}}$. Assume that $\Delta (\mathbf{u}) = \Delta
  (\mathbf{v}^{\star})$. Then there is a unique element $\mathbf{w}$ such
  that $\llbracket u, w, v \rrbracket = 0$ is a normalized solution, and such
  that $\Delta (\mathbf{w}) = \Delta (\mathbf{v})$ and $\Delta
  (\mathbf{w}^{\ast}) = \Delta (\mathbf{u}^{\ast})$. The element
  $\mathbf{w}$ depends continuously on $\mathbf{u}$ and $\mathbf{v}$.
\end{proposition}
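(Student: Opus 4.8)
The plan is to prove everything first on the dense open stratum $\Omega^\circ$, where Theorem~\ref{thm:wybe} and Proposition~\ref{prop:threedel} apply directly, and then transport the conclusions to all of $\mathfrak{G}_{\operatorname{nf}}$ using the continuity machinery assembled above, with Lemma~\ref{lem:cxdense} as the bridge. Concretely, I would first apply Lemma~\ref{lem:cxdense} to $\mathbf{u}$ and to $\mathbf{v}$ separately, obtaining sequences $\{u_n\},\{v_n\}\subset\Omega^\circ$ with $u_n\to\mathbf{u}$, $v_n\to\mathbf{v}$ and with $\Delta(u_n)=\Delta(\mathbf{u})$, $\Delta(u_n^\ast)=\Delta(\mathbf{u}^\ast)$, $\Delta(v_n)=\Delta(\mathbf{v})$, $\Delta(v_n^\ast)=\Delta(\mathbf{v}^\ast)$. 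Since $\Delta(u_n)=\Delta(\mathbf{u})=\Delta(\mathbf{v}^\ast)=\Delta(v_n^\ast)$, Corollary~\ref{cor:conddel} (via Theorem~\ref{thm:wybe}) produces for each $n$ a unique normalized $w_n$ with $\llbracket u_n,w_n,v_n\rrbracket=0$, whose weights are the explicit polynomial expressions (\ref{eq:wbcdef}) in the entries of $u_n,v_n$ and of $u_n^\ast,v_n^\ast$.

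Next I would pass to the limit. Because the $\ast$-operation extends continuously to $\mathfrak{G}_{\operatorname{nf}}$ by Proposition~\ref{prop:gpostarcontin}, we have $u_n^\ast\to\mathbf{u}^\ast$ and $v_n^\ast\to\mathbf{v}^\ast$, so (\ref{eq:wbcdef}) shows that the matrices $w_n$ converge in the complex topology to a six-vertex matrix $w$ given by the same formulas in $\mathbf{u},\mathbf{v}$. Continuity of the Yang--Baxter commutator then yields $\llbracket u,w,v\rrbracket=0$, and the normalization (\ref{eq:normalcond}) passes to the limit; in particular $c_1(w)=c_1(u)c_1(v)\neq0$, so $w$ again has nonvanishing $c$-weights. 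I would then set $\mathbf{w}=(w,d_1,d_2)$ with $(d_1,d_2):=\Delta(\mathbf{v})$, so that $\Delta(\mathbf{w})=\Delta(\mathbf{v})$ holds tautologically by the definition of the blow-up coordinates.

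The crucial point—and the reason for invoking Lemma~\ref{lem:cxdense} in this precise form—is to check that $\mathbf{w}$ actually lies in $\mathfrak{G}_{\operatorname{nf}}$, i.e. that $a_1(w)b_1(w)d_1=a_2(w)b_2(w)d_2=N(w)$, \emph{without} knowing that the approximants $w_n$ stay in $\Omega^\circ$. This is where Proposition~\ref{prop:odelto} pays off: its two identities (\ref{eq:dvwid}), $a_1(v_n)b_1(v_n)N(w_n)=a_1(w_n)b_1(w_n)N(v_n)$ and $a_2(v_n)b_2(v_n)N(w_n)=a_2(w_n)b_2(w_n)N(v_n)$, hold for every $w_n$ (the second proof of that proposition does not assume $w_n\in\Omega^\circ$). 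Combining these with $N(v_n)=d_1\,a_1(v_n)b_1(v_n)$ and $N(v_n)=d_2\,a_2(v_n)b_2(v_n)$, which come from $\Delta(v_n)=(d_1,d_2)$, gives $N(w_n)=a_1(w_n)b_1(w_n)d_1$ and $N(w_n)=a_2(w_n)b_2(w_n)d_2$ for all $n$; letting $n\to\infty$ produces both defining constraints of $\mathfrak{G}_{\operatorname{nf}}$. The analogous manipulation with (\ref{eq:duwid}), or equivalently Proposition~\ref{prop:threedel} on the interior together with the continuity of $\Delta$ and of $\ast$ on $\mathfrak{G}_{\operatorname{nf}}$, yields $\Delta(\mathbf{w}^\ast)=\Delta(\mathbf{u}^\ast)$.

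The main obstacle I anticipate is confirming the set-membership $w\in\overline{\Omega}$ in the degenerate range $N(w)=0$. When $N(w)\neq0$ the constraints immediately force $a_1,a_2,b_1,b_2,c_1,c_2\neq0$, so $w\in\Omega=\Omega^\circ\cup\Omega_B$; but when $N(w)=0$ they only give $a_1(w)b_1(w)=a_2(w)b_2(w)=0$, which a priori permits points outside $\overline{\Omega}$. To exclude these I would use the structural dichotomy of Lemma~\ref{lem:omegaab}—each $w_n$ with $N(w_n)=0$ lies in $\Omega_a\cup\Omega_b$, so that \emph{both} $a$-weights or \emph{both} $b$-weights vanish—together with the earlier fact that the normalized solution attached to a pair in $\Omega^\circ$ always lands in $\overline{\Omega}$, and track which regime survives in the limit; this pins $w$ into $\Omega_a$ or $\Omega_b$ and hence $\mathbf{w}$ into $\Gamma_a$ or $\Gamma_b$. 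Finally, uniqueness follows because Theorem~\ref{thm:wybe} already determines the matrix $w$ up to its normalization, while over the blow-up locus the two extra coordinates $(d_1,d_2)$ are forced by the required identities $\Delta(\mathbf{w})=\Delta(\mathbf{v})$ and $\Delta(\mathbf{w}^\ast)=\Delta(\mathbf{u}^\ast)$; and continuous dependence of $\mathbf{w}$ on $\mathbf{u},\mathbf{v}$ is immediate from the polynomial formulas (\ref{eq:wbcdef}) combined with the continuity of $\ast$ (Proposition~\ref{prop:gpostarcontin}) and of the coordinate map $\Delta$.
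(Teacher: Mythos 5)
Your route is genuinely different from the paper's. The paper proves this proposition by direct case analysis: for $\mathbf{u},\mathbf{v}\in\Omega$ it quotes Theorem~\ref{thm:wybe}, and for boundary cases it invokes the explicit formulas of Proposition~\ref{prop:bdyuv}, reserving special treatment for the doubly degenerate case $\mathbf{u},\mathbf{v}\in\Gamma_a$. You instead approximate from $\Omega^{\circ}$ via Lemma~\ref{lem:cxdense} and pass to the limit in (\ref{eq:wbcdef}) --- essentially the continuity strategy the paper itself uses for Theorem~\ref{thm:assoax}. For existence, the coordinate constraints, and continuous dependence, your argument works, and your use of (\ref{eq:dvwid}) with $N(v_n)=d_1a_1(v_n)b_1(v_n)=d_2a_2(v_n)b_2(v_n)$ to get $N(w_n)=d_1a_1(w_n)b_1(w_n)=d_2a_2(w_n)b_2(w_n)$ is clean. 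One repair: Lemma~\ref{lem:omegaab} as you invoke it does not literally apply to the approximants $w_n$, since $N(w_n)$ may be nonzero for every $n$ even when $N(w)=0$ in the limit; but the arithmetic of its proof runs directly at the limit point. From $d_i\neq0$ you get $a_1(w)b_1(w)=a_2(w)b_2(w)=0$, and the mixed vanishing patterns are impossible because, e.g., $a_1(w)=b_2(w)=0$ would give $N(w)=-c_1(w)c_2(w)\neq 0$ (recall $c_i(w)=c_i(u)c_i(v)\neq0$), contradicting $N(w)=0$; so both $a$'s or both $b$'s vanish and $w\in\Omega_a\cup\Omega_b$ as needed.

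The genuine gap is in your uniqueness paragraph. You assert that ``Theorem~\ref{thm:wybe} already determines the matrix $w$ up to its normalization,'' but that theorem assumes $a_1(u),a_2(u),a_1(v),a_2(v)\neq0$ and is inapplicable when $\mathbf{u},\mathbf{v}\in\Gamma_a$; in exactly that case Proposition~\ref{prop:bdyuv} states the opposite: the normalized Yang--Baxter solution is \emph{not} unique, since $b_1(w)$ and $b_2(w)$ are completely unconstrained by $\llbracket u,w,v\rrbracket=0$. Your limit construction produces one valid $\mathbf{w}$, but you have not excluded other members of this two-parameter family satisfying the side conditions. Nor does membership in the groupoid fiber do it for you: the constraints $N(w)=d_1a_1(w)b_1(w)=d_2a_2(w)b_2(w)$ are a pair of \emph{quadratic} conditions in $(b_1(w),b_2(w))$ and need not cut out a single point. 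This degenerate case is precisely where the paper's proof spends its effort: it extends the identities $b_1(r)=\bigl(a_2(r)-a_1(r^{\ast})\bigr)/\Delta_1(r)$ and $b_2(r)=\bigl(a_1(r)-a_2(r^{\ast})\bigr)/\Delta_2(r)$ (consequences of (\ref{eq:defdel1}) and (\ref{eq:defdel2})) by continuity to $\mathfrak{G}_{\operatorname{nf}}$, and then observes that the requirement $\Delta(\mathbf{w})=\Delta(\mathbf{v})$ \emph{forces} $b_1(\mathbf{w})=\bigl(a_2(\mathbf{w})-a_1(\mathbf{w}^{\ast})\bigr)/\Delta_1(\mathbf{v})$ and $b_2(\mathbf{w})=\bigl(a_1(\mathbf{w})-a_2(\mathbf{w}^{\ast})\bigr)/\Delta_2(\mathbf{v})$, with $a_1(\mathbf{w}),a_2(\mathbf{w}),c_i(\mathbf{w})$ already pinned down by Proposition~\ref{prop:bdyuv}. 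You would need to add an argument of this kind to make the uniqueness claim hold in the $\Gamma_a\times\Gamma_a$ case.
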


\begin{proof}
  This follows from Theorem~\ref{thm:wybe} if $\mathbf{u}, \mathbf{v} \in
  \Omega$, but we must consider the case where one or both is in $\Omega_b$ or
  $\Omega_a$. In this case, the existence of $w$ such that $\llbracket u, w, v
  \rrbracket = 0$ is guaranteed by Proposition~\ref{prop:bdyuv}. However if
  both $\mathbf{u}, \mathbf{v} \in \Gamma_a$ that Proposition shows that
  while $a_1 (\mathbf{w}), a_2 (\mathbf{w}), c_1 (\mathbf{w}), c_2
  (\mathbf{w})$ are determined, but we must deduce the values for $b_1
  (\mathbf{w})$ and $b_2 (\mathbf{w})$ from the requirement that $\Delta
  (\mathbf{w}) = \Delta (\mathbf{v})$. We consider the identities
  \[ b_1 (r) = \frac{a_2 (r) - a_1 (r^{\ast})}{\Delta_1 (r)}, \qquad b_1 (r) =
     \frac{a_1 (r) - a_2 (r^{\ast})}{\Delta_2 (r)}, \]
  which follow from the definitions of $\Delta_i$ and $r^{\ast}$ for $r \in
  \Omega$, and by continuity to $\mathfrak{G}_{\operatorname{nr}}$. Since we require
  $\Delta (\mathbf{w}) = \Delta (\mathbf{v})$, we see that we must define
  \[ b_1 (\mathbf{w}) = \frac{a_2 (\mathbf{w}) - a_1
     (\mathbf{w}^{\ast})}{\Delta_1 (\mathbf{v})}, \qquad b_2
     (\mathbf{w}) = \frac{a_1 (\mathbf{w}) - a_2
     (\mathbf{w}^{\ast})}{\Delta_2 (\mathbf{v})} . \]
  The continuity of the $\star$ operation is clear.
\end{proof}

Now let us define the groupoid composition $\star$ on $\mathfrak{G}_{\operatorname{nf}}$.
If $\Delta (\mathbf{u}) = \Delta (\mathbf{v}^{\ast})$, let $\mathbf{u}
\star \mathbf{v}=\mathbf{w}$, where $\mathbf{w}$ is as in
Proposition~\ref{prop:defgensta}. In this case we say that $\mathbf{u} \star
\mathbf{v}$ is {\textit{defined}}.

\begin{proposition}[Associativity for general position]
  \label{prop:gpogenasso}If $r, s, t, u, v \in \Omega^{\circ}$ are such that
  $r \star t = s$ and $t \star v = u$. Then $s \star v$ and $r \star u$ are
  both defined, and they are equal.
\end{proposition}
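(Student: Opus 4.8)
The plan is to separate the claim into \emph{definedness} and \emph{equality}, and to settle equality by reducing it to a single Yang--Baxter identity through the uniqueness in Theorem~\ref{thm:wybe}. Definedness is pure bookkeeping with the object map: applying Proposition~\ref{prop:defgensta} to $s=r\star t$ gives $\Delta(r)=\Delta(t^\ast)$, $\Delta(s)=\Delta(t)$ and $\Delta(s^\ast)=\Delta(r^\ast)$, and applying it to $u=t\star v$ gives $\Delta(t)=\Delta(v^\ast)$, $\Delta(u)=\Delta(v)$ and $\Delta(u^\ast)=\Delta(t^\ast)$. Chaining these yields $\Delta(s)=\Delta(t)=\Delta(v^\ast)$ and $\Delta(r)=\Delta(t^\ast)=\Delta(u^\ast)$, so both $s\star v$ and $r\star u$ are defined. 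Writing $w_1=s\star v$ and $w_2=r\star u$, I would record that their normalizations agree: by (\ref{eq:normalcond}), $c_i(w_1)=c_i(s)c_i(v)=c_i(r)c_i(t)c_i(v)=c_i(r)c_i(u)=c_i(w_2)$ for $i=1,2$.

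For equality I would exploit that $w_1$ is \emph{the} normalized solution of $\llbracket s,w_1,v\rrbracket=0$, determined up to scalar by Theorem~\ref{thm:wybe}; since $w_2$ already carries the correct normalizing entries $c_i(w_2)=c_i(s)c_i(v)$, it suffices to prove the single operator identity $\llbracket s,w_2,v\rrbracket=0$, after which uniqueness forces $w_2=w_1$. Concretely, writing $X_{12}=X\otimes I$ and $X_{23}=I\otimes X$, the hypotheses read $r_{12}s_{23}t_{12}=t_{23}s_{12}r_{23}$ and $t_{12}u_{23}v_{12}=v_{23}u_{12}t_{23}$, together with $r_{12}(w_2)_{23}u_{12}=u_{23}(w_2)_{12}r_{23}$ coming from $w_2=r\star u$, while the goal is $s_{12}(w_2)_{23}v_{12}=v_{23}(w_2)_{12}s_{23}$.

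I do not expect this to be reachable by manipulation on three tensor factors alone; instead I would pass to $V^{\otimes 4}$ and read the six matrices as decorated crossings attached to four slots, with $r,t,v$ crossing the adjacent pairs $\{1,2\},\{2,3\},\{3,4\}$ and $s,u,w_2$ crossing $\{1,3\},\{2,4\},\{1,4\}$ respectively. The two hypotheses are then the Yang--Baxter (braid) moves among the strands $\{1,2,3\}$ and $\{2,3,4\}$, and the target is the move among $\{1,3,4\}$; all of these, together with the far-commutations $X_{12}Y_{34}=Y_{34}X_{12}$, are precisely the moves relating two reduced words for the longest element of $S_4$, in which each of the six pairs of strands crosses exactly once. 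Because $\Omega^\circ\subset S^\times$, every matrix in sight is invertible, so Lemma~\ref{lem:sixcases} lets me present each given relation in whichever of its six equivalent forms aligns the factors for a given braid move, and invertibility lets me cancel the common crossings as the word is transported between the two reduced expressions.

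The crux, and the main obstacle, is organizing this $V^{\otimes 4}$ rearrangement so that every elementary step is a \emph{defined} instance of the composition and so that isolating the single crossing $\{1,4\}$ yields $\llbracket s,w_2,v\rrbracket=0$, equivalently $w_1=w_2$. Should the braid-word route resist a fully clean write-up, there is a guaranteed fallback: substitute the explicit bilinear formulas (\ref{eq:wbcdef}) for $w_1$ and $w_2$ and verify the identity as a rational relation in the entries of $r,t,v$; by the irreducibility established in Proposition~\ref{prop:zirr} this need only be checked on the dense locus where all intermediate matrices lie in $\Omega^\circ$. It would also be worth seeking a matrix realization of $\mathfrak{G}_{\operatorname{nf}}$ under which $\star$ becomes a restriction of matrix multiplication, generalizing the free-fermionic $2\times 2$ picture, since associativity would then be automatic.
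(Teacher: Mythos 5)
Your definedness half is correct and is exactly the paper's argument: the paper also chains the object-map identities (it obtains $\Delta(t^{\ast})=\Delta(u^{\ast})$ by applying Lemma~\ref{lem:sixcases} to $\llbracket t,u,v\rrbracket=0$ rather than quoting the third identity of Proposition~\ref{prop:defgensta}, an immaterial difference), and your reduction of the equality $s\star v=r\star u$ to the single identity $\llbracket s,w_2,v\rrbracket=0$ via the uniqueness clause of Theorem~\ref{thm:wybe}, together with the normalization check $c_i(w_2)=c_i(r)c_i(t)c_i(v)=c_i(w_1)$, is sound.

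The gap is in your featured route to that identity. Deriving the fourth Yang--Baxter relation from the other three by transporting reduced words of the longest element of $S_4$ is not merely hard to write up cleanly; it is impossible, because the implication fails for general invertible matrices. With your notation $X_{12}=X\otimes I_V$, $X_{23}=I_V\otimes X$, take $r=t=u=v=P$ (the flip of $V\otimes V$), $w=I$, and $s=\operatorname{diag}(1,2,3,4)$. Then all six matrices are invertible and
\[ \llbracket r,s,t\rrbracket=P_{12}s_{23}P_{12}-P_{23}s_{12}P_{23}=0,\qquad \llbracket t,u,v\rrbracket=P_{12}P_{23}P_{12}-P_{23}P_{12}P_{23}=0, \]
\[ \llbracket r,w,u\rrbracket=P_{12}P_{12}-P_{23}P_{23}=0,\qquad\text{but}\qquad \llbracket s,w,v\rrbracket=(sP)_{12}-(Ps)_{23}\neq 0, \]
as one sees by applying the last expression to $e_1\otimes e_2\otimes e_1$. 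Since the starred forms of Lemma~\ref{lem:sixcases} are themselves formal consequences of the given relations plus invertibility, no amount of cancellation and re-presentation of the three hypotheses can produce the target relation. Combinatorially this is the parity obstruction you kept running into: in the graph of reduced words of $w_0\in S_4$, every cycle (the commutation squares and the Manin--Schechtman octagon) uses each of the four braid-move types an even number of times, so any word transport between the two sides of the $(1,3,4)$ move must use that very move an odd number of times. Consequently six-vertex-specific input is unavoidable, and your ``fallback'' is in fact the only workable route among those you list --- and it is precisely the paper's proof: substitute the explicit formulas (\ref{eq:wbcdef}) and compare the weights of $s\star v$ and $r\star u$ entry by entry, using $s^{\ast}=t^{\ast}\star r^{\ast}$ (Lemma~\ref{lem:sixcases}) together with $b_1(r^{\ast})=-b_1(r)$, $b_2(t^{\ast})=-b_2(t)$ to handle $b_1$ and $b_2$. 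Note also that this computation is valid even when the composites land on the boundary of $\overline{\Omega}$, so the appeal to Proposition~\ref{prop:zirr} and density that you propose is unnecessary here; in the paper irreducibility and continuity are reserved for extending associativity to the whole groupoid in Theorem~\ref{thm:assoax}.
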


\begin{proof}
  Note that although by assumption $r, s, t, u, v$ are all in $\Omega^{\circ}$
  the compositions $s \star v$ and $r \star u$ could be in a boundary
  component.
  
  By Proposition~\ref{prop:threedel}, $\Delta (r) = \Delta (t^{\ast})$ and
  $\Delta (s) = \Delta (t)$, and similarly $\Delta (t) = \Delta (v^{\ast})$,
  and $\Delta (u) = \Delta (v)$. Note that $\Delta (s) = \Delta (t) = \Delta
  (v^{\ast})$ implies that $s \star v$ is defined by
  Proposition~\ref{prop:defgensta}. On the other hand, we note that
  $\llbracket t, u, v \rrbracket = 0$ since $u = t \star v$, we also have
  $\llbracket v, t^{\ast}, u^{\ast} \rrbracket = 0$. Thus $\Delta (t^{\ast}) =
  \Delta (u^{\ast})$. Therefore $\Delta (r) = \Delta (u^{\ast})$ and so $r
  \star u$ is defined.
  
  It remains to be shown that $r \star u = s \star v$. We have
  \[ c_1  (r \star u) = c_1 (r) c_1 (t) \star c_1 (v) = c_1  (s \ast v), \]
  and similarly for $c_2$. Using (\ref{eq:wbcdef}) we have
  \[ \begin{array}{lll}
       a_1  (s \star v) & = & a_1 (s) a_1 (v) - b_2 (s) b_1 (v)\\
       & = & (a_1 (r) a_1 (t) - b_2 (r) b_1 (t)) a_1 (v) - (a_1 (r) b_2 (t) +
       b_2 (r) a_1^{\ast} (t)) b_1 (v)\\
       & = & a_1 (r)  (a_1 (t) a_1 (v) - b_2 (t) b_1 (v)) - b_2 (r)  (b_1 (t)
       a_1 (v) + a_1 (t^{\ast}) b_1 (v))\\
       & = & a_1 (r) a_1 (u) - b_2 (r) b_1 (u) = a_1  (r \star u)
     \end{array} \]
  and similarly $a_2  (s \star v) = a_2  (r \star u)$.
  
  We note that $s^{\ast} = t^{\ast} \star r^{\ast}$ by
  Lemma~\ref{lem:sixcases}. Therefore using (\ref{eq:wbcdef}) and the fact
  that $b_1 (r^{\ast}) = - b_1 (r)$ and $b_2 (t^{\ast}) = - b_2 (t)$ we have
  \[ a_1 (s^{\ast}) = a_1 (r^{\ast}) a_1 (t^{\ast}) - b_1 (r^{\ast}) b_2
     (t^{\ast}) = a_1 (r^{\ast}) a_1 (t^{\ast}) - b_1 (r) b_2 (t) . \]
  Using these equations
  \[ \begin{array}{lll}
       b_1  (s \star v) & = & a_1 (s^{\ast}) b_1 (v) + b_1 (s) a_1 (v)\\
       & = & (a_1 (t^{\ast}) a_1 (r^{\ast}) - b_1 (r) b_2 (t)) b_1 (v) + (b_1
       (r) a_1 (t) + b_1 (t) a_1 (r^{\ast})) a_1 (v),\\
       &  & \\
       b_1  (r \star u) & = & a_1 (r^{\ast}) b_1 (u) + b_1 (r) a_1 (u)\\
       & = & a_1 (r^{\ast})  (a_1 (t^{\ast}) b_1 (v) + b_1 (t) a_1 (v)) + b_1
       (r)  (a_1 (t) a_1 (v) - b_2 (t) b_1 (v)) .
     \end{array} \]
  These are term by term equal, and similarly $b_2  (s \star v) = b_2  (r
  \star u)$. We see that $s \star v$ and $r \star u$ have the same Boltzmann
  weights, and so they are equal.
\end{proof}

The next two results establish that the composition law that we have defined
on $\mathfrak{G}_{\operatorname{nf}}$ satisfies the groupoid axioms.

\begin{theorem}[Associativity Axiom]
	\label{thm:assoax}
  Let $\mathbf{r}, \mathbf{s}, \mathbf{t}, \mathbf{u}, \mathbf{v}
  \in \mathfrak{G}_{\operatorname{nf}}$ such that $\mathbf{r} \star
  \mathbf{t}=\mathbf{s}$ and $\mathbf{t} \star
  \mathbf{v}=\mathbf{u}$. Then \ are defined, and they are equal.
\end{theorem}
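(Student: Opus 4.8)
The plan is to reduce the general statement to the general-position case of Proposition~\ref{prop:gpogenasso} by a continuity argument, using Lemma~\ref{lem:cxdense} to produce the approximating sequences. I would split the proof into two parts: first that $\mathbf{s}\star\mathbf{v}$ and $\mathbf{r}\star\mathbf{u}$ are defined, and then that they agree.

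For the first part I would argue purely from the transformation rules for $\Delta$ recorded in Proposition~\ref{prop:defgensta}. The hypothesis $\mathbf{s}=\mathbf{r}\star\mathbf{t}$ gives $\Delta(\mathbf{r})=\Delta(\mathbf{t}^\ast)$ together with $\Delta(\mathbf{s})=\Delta(\mathbf{t})$ and $\Delta(\mathbf{s}^\ast)=\Delta(\mathbf{r}^\ast)$, while $\mathbf{u}=\mathbf{t}\star\mathbf{v}$ gives $\Delta(\mathbf{t})=\Delta(\mathbf{v}^\ast)$ together with $\Delta(\mathbf{u})=\Delta(\mathbf{v})$ and $\Delta(\mathbf{u}^\ast)=\Delta(\mathbf{t}^\ast)$. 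Chaining these equalities yields $\Delta(\mathbf{s})=\Delta(\mathbf{t})=\Delta(\mathbf{v}^\ast)$, so $\mathbf{s}\star\mathbf{v}$ is defined, and $\Delta(\mathbf{r})=\Delta(\mathbf{t}^\ast)=\Delta(\mathbf{u}^\ast)$, so $\mathbf{r}\star\mathbf{u}$ is defined.

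For the equality I would invoke Lemma~\ref{lem:cxdense} to choose sequences $\{r_n\},\{t_n\},\{v_n\}\subset\Omega^\circ$ with $r_n\to\mathbf{r}$, $t_n\to\mathbf{t}$, $v_n\to\mathbf{v}$ and with $\Delta$ and its starred value matched exactly to those of the limits, e.g. $\Delta(r_n)=\Delta(\mathbf{r})$, $\Delta(t_n^\ast)=\Delta(\mathbf{t}^\ast)$, and so on. Preserving $\Delta$ along the sequences is precisely what makes the compositions defined at every level: $\Delta(r_n)=\Delta(\mathbf{t}^\ast)=\Delta(t_n^\ast)$ forces $r_n\star t_n$ to be defined, and likewise $t_n\star v_n$. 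Writing $s_n=r_n\star t_n$ and $u_n=t_n\star v_n$, continuity of $\star$ (Proposition~\ref{prop:defgensta}) gives $s_n\to\mathbf{s}$ and $u_n\to\mathbf{u}$, and the same $\Delta$-bookkeeping shows $\Delta(s_n)=\Delta(t_n)=\Delta(v_n^\ast)$ and $\Delta(r_n)=\Delta(u_n^\ast)$, so $s_n\star v_n$ and $r_n\star u_n$ are defined and converge to $\mathbf{s}\star\mathbf{v}$ and $\mathbf{r}\star\mathbf{u}$. Provided $s_n,u_n\in\Omega^\circ$, all five of $r_n,s_n,t_n,u_n,v_n$ lie in $\Omega^\circ$, Proposition~\ref{prop:gpogenasso} applies to give $s_n\star v_n=r_n\star u_n$, and passing to the limit yields $\mathbf{s}\star\mathbf{v}=\mathbf{r}\star\mathbf{u}$.

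The main obstacle is exactly the proviso $s_n,u_n\in\Omega^\circ$: a priori the compositions $r_n\star t_n$ and $t_n\star v_n$ of interior elements may land in a boundary component $\Gamma_b$, $\Gamma_a$, or $\Omega_B$. To handle this I would exploit the freedom in the approximation. For fixed $\alpha$ the fibre $L_\alpha=\{u\in\Omega^\circ\mid\Delta(u)=\alpha\}$ of Proposition~\ref{prop:zirr} is positive-dimensional, so within the constraints $\Delta(r_n)=\Delta(\mathbf{r})$, $\Delta(t_n)=\Delta(\mathbf{t})$, $\Delta(v_n^\ast)=\Delta(\mathbf{v}^\ast)$ there is genuine room to move. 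Because $s_n$ and $u_n$ are given by the explicit polynomial formulas~(\ref{eq:wbcdef}) in the weights of their factors, the loci where $s_n\notin\Omega^\circ$ or $u_n\notin\Omega^\circ$ are proper Zariski-closed conditions on these fibres; choosing $t_n$ first and then $r_n,v_n$ generically off these loci, while still converging to $\mathbf{r},\mathbf{t},\mathbf{v}$, arranges $s_n,u_n\in\Omega^\circ$. Making this genericity precise is where the bulk of the work lies; once it is in place, the limit passage above is routine.
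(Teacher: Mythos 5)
Your proposal is correct and follows essentially the same route as the paper's proof: approximate $\mathbf{r},\mathbf{t},\mathbf{v}$ by $\Omega^{\circ}$-sequences via Lemma~\ref{lem:cxdense} with matched $\Delta$-values, perturb (with $\{t_n\}$ fixed) so that $s_n=r_n\star t_n$ and $u_n=t_n\star v_n$ land in $\Omega^{\circ}$, apply Proposition~\ref{prop:gpogenasso}, and pass to the limit by continuity of $\star$. If anything, you are more explicit than the paper at the two delicate points --- the $\Delta$-bookkeeping showing the limit compositions are defined, and the Zariski-genericity justification of the perturbation, which the paper merely asserts.
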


\begin{proof}
  We could argue similarly to Proposition~\ref{prop:gpogenasso}, on a
  case-by-case basis to handle the edge cases, where one or more of
  $\mathbf{r}, \mathbf{s}, \mathbf{t}, \mathbf{u}, \mathbf{v}$ is in
  one of the boundary components. Instead, we will argue by continuity. 
  By Lemma~\ref{lem:cxdense} we may
  find sequences $\{ r_n \}, \{ t_n \}, \{ v_n \} \subset \Omega^{\circ}$ such
  that $\Delta (r_n) = \Delta (\mathbf{r})$ with $r_n \rightarrow
  \mathbf{r}$, etc. Then for each $n$, $\Delta (r_n) = \Delta (t_n^{\ast})$
  so $r_n \star t_n$ is defined; call this $s_n$. With $\{ t_n \}$ fixed, we
  may perturb the sequence $\{ r_n \}$ so that $s_n \in \Omega^{\circ}$.
  Similarly, with $\{ t_n \}$ fixed, we may purturb the sequence $\{ v_n \}$
  so that $u_n \in \Omega^{\circ}$. Now by Proposition~\ref{prop:gpogenasso},
  $r_n \star u_n$ and $s_n \star v_n$ are defined and equal, and taking the
  limit gives $\mathbf{s} \star \mathbf{v}=\mathbf{r} \star
  \mathbf{u}$.
\end{proof}

If $\mathbf{u}$, we define the groupoid inverse to be $\mathbf{u}' :=
\frac{1}{c_1 (u) c_2 (u)} \mathbf{u}^{\ast}$.

\begin{proposition}[Idempotents]
  \label{prop:idempotents}Let $d_1, d_2 \in \mathbb{C}^{\times}$. Let
  $\mathbf{I}_{d_1, d_2} = (I_{V \otimes V}, d_1, d_2)$. Then
  $\mathbf{I}_{d_1, d_2}$ is an idempotent in that $\mathbf{I}_{d_1, d_2}
  \star \mathbf{I}_{d_1, d_2} =\mathbf{I}_{d_1, d_2}$. We have
  $\mathbf{I}_{d_1, d_2}^{\ast} =\mathbf{I}_{d_1, d_2}'
  =\mathbf{I}_{d_1, d_2}$. Furthermore $\Delta (\mathbf{I}_{d_1, d_2}) =
  (d_1, d_2)$. If $\mathbf{u} \star \mathbf{I}_{d_1, d_2}$ is defined
  $\mathbf{u} \star \mathbf{I}_{d_1, d_2} =\mathbf{u}$. If
  $\mathbf{I}_{d_1, d_2} \star \mathbf{v}$ is defined then
  $\mathbf{I}_{d_1, d_2} \star \mathbf{v}=\mathbf{v}$.
\end{proposition}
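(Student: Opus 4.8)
The plan is to handle the five assertions in an order where each depends only on earlier ones: first the value of $\Delta$, then the computation of $\mathbf{I}_{d_1,d_2}^{\ast}$ and $\mathbf{I}_{d_1,d_2}'$, then the two unit laws, and finally idempotency as a corollary. To set up, I would note that as a six-vertex matrix $I_{V\otimes V}$ has $a_1=a_2=c_1=c_2=1$ and $b_1=b_2=0$, so it lies in $\Omega_b$ with $N(I_{V\otimes V})=0$; the defining constraint $a_1 b_1 d_1 = a_2 b_2 d_2 = N$ of $\mathfrak{G}_{\operatorname{nf}}$ is therefore vacuous here, which is exactly why $(I_{V\otimes V},d_1,d_2)$ is a legitimate point for every $d_1,d_2\in\mathbb{C}^{\times}$. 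The identity $\Delta(\mathbf{I}_{d_1,d_2})=(d_1,d_2)$ is then immediate from the definition of $\Delta$ on the blown-up component $\Gamma_b$.

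For $\mathbf{I}_{d_1,d_2}^{\ast}=\mathbf{I}_{d_1,d_2}$ I would specialize Proposition~\ref{prop:gpostarcontin} to $\mathbf{r}=\mathbf{I}_{d_1,d_2}\in\Gamma_b$. Since $a_1(r)=a_2(r)=c_1(r)=c_2(r)=1$ and $b_1(r)=b_2(r)=0$, that formula gives $(a_1(\mathbf{r}^{\ast}),a_2(\mathbf{r}^{\ast}))=(a_2(r),a_1(r))=(1,1)$, the matrix part of $r^{\ast}$ is again $I_{V\otimes V}$, and the ``otherwise'' case of the torus coordinates gives $(d_1^{\ast},d_2^{\ast})=(\frac{a_1(r)}{a_2(r)}d_1,\frac{a_2(r)}{a_1(r)}d_2)=(d_1,d_2)$. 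Because $c_1=c_2=1$, the groupoid inverse $\mathbf{I}_{d_1,d_2}'=\frac{1}{c_1 c_2}\mathbf{I}_{d_1,d_2}^{\ast}$ coincides with $\mathbf{I}_{d_1,d_2}^{\ast}=\mathbf{I}_{d_1,d_2}$.

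The core of the proof is the two unit laws, and the key observation is that the Yang-Baxter commutator~(\ref{eq:ybcommutator}) collapses when one slot is the identity operator. Writing $\mathbf{I}=\mathbf{I}_{d_1,d_2}$ and using $I_{V\otimes V}\otimes 1 = 1\otimes I_{V\otimes V}=I_{V\otimes V\otimes V}$, I get $\llbracket I_{V\otimes V},w,v\rrbracket=(1\otimes w)(v\otimes 1)-(1\otimes v)(w\otimes 1)$ and $\llbracket u,w,I_{V\otimes V}\rrbracket=(u\otimes 1)(1\otimes w)-(w\otimes 1)(1\otimes u)$. Setting $w=v$ in the first and $w=u$ in the second makes each difference vanish because its two terms become identical, so $v$ solves $\llbracket I_{V\otimes V},w,v\rrbracket=0$ and $u$ solves $\llbracket u,w,I_{V\otimes V}\rrbracket=0$. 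To finish the left unit law, assume $\mathbf{I}\star\mathbf{v}$ is defined, i.e.\ $\Delta(\mathbf{I})=\Delta(\mathbf{v}^{\ast})$, and verify that $\mathbf{v}$ itself meets every condition characterizing $\mathbf{I}\star\mathbf{v}$ in Proposition~\ref{prop:defgensta}: the matrix $v$ solves the Yang-Baxter equation, it is normalized since $c_i(v)=c_i(I_{V\otimes V})\,c_i(v)$, it trivially satisfies $\Delta(\mathbf{v})=\Delta(\mathbf{v})$, and it satisfies $\Delta(\mathbf{v}^{\ast})=\Delta(\mathbf{I}^{\ast})=\Delta(\mathbf{I})$ by the ``defined'' hypothesis together with $\mathbf{I}^{\ast}=\mathbf{I}$. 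The uniqueness clause of Proposition~\ref{prop:defgensta} then forces $\mathbf{I}\star\mathbf{v}=\mathbf{v}$. The right unit law is symmetric, with $\mathbf{u}$ as the solution and the defined-hypothesis $\Delta(\mathbf{u})=\Delta(\mathbf{I})$ supplying the $\Delta$-match; and idempotency follows as the case $\mathbf{u}=\mathbf{v}=\mathbf{I}$, the product being defined because $\Delta(\mathbf{I})=\Delta(\mathbf{I}^{\ast})$.

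The one delicate point I expect is boundary bookkeeping: I must be sure that the ``obvious'' matrix solution $w=v$ (resp.\ $w=u$) really is the normalized six-vertex solution produced by the construction of $\star$, with correctly pinned-down torus coordinates, even when $v$ (resp.\ $u$) lies in $\Gamma_a$ or $\Gamma_b$. I would handle this by appealing to the uniqueness in Proposition~\ref{prop:defgensta} rather than re-deriving the solution; as a cross-check, the explicit $\Omega_b$ formulas of Proposition~\ref{prop:bdyuv} reduce term-by-term to $w=v$ when $u=I_{V\otimes V}$. Since $I_{V\otimes V}\in\Omega_b$ and never in $\Omega_a$, the ``both in $\Omega_a$'' ambiguity of Proposition~\ref{prop:bdyuv} never occurs, so the solution is always unique and no extra argument is needed.
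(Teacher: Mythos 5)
Your proof is correct, and it rests on the same pillars as the paper's own argument: Proposition~\ref{prop:gpostarcontin} to get $\mathbf{I}_{d_1,d_2}^{\ast}=\mathbf{I}_{d_1,d_2}$ (and hence $\mathbf{I}_{d_1,d_2}'=\mathbf{I}_{d_1,d_2}$ since $c_1=c_2=1$), and the uniqueness of the normalized solution, via Propositions~\ref{prop:bdyuv} and~\ref{prop:defgensta}, to identify the product. The organization is reversed, however, and you prove strictly more. The paper establishes idempotency directly: by the uniqueness assertion of Proposition~\ref{prop:bdyuv} (applicable since $u=v=I_{V\otimes V}\in\Omega_b$), the unique normalized solution of $\llbracket I,J,I\rrbracket=0$ is $J=I$, and Proposition~\ref{prop:defgensta} pins the torus coordinates $(D_1,D_2)=(d_1,d_2)$; the two unit laws are then dismissed with ``we leave the remaining details to the reader.'' You instead prove the unit laws first, via the clean observation that the Yang--Baxter commutator degenerates when one slot is the identity operator, so that $w=v$ (resp.\ $w=u$) is visibly a normalized solution; you then verify the two $\Delta$-conditions characterizing the product in Proposition~\ref{prop:defgensta}, using $\mathbf{I}^{\ast}=\mathbf{I}$ together with the defined-hypothesis $\Delta(\mathbf{I})=\Delta(\mathbf{v}^{\ast})$ (resp.\ $\Delta(\mathbf{u})=\Delta(\mathbf{I})$), and obtain idempotency as the special case $\mathbf{u}=\mathbf{v}=\mathbf{I}_{d_1,d_2}$. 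What each buys: the paper's version is shorter where it is explicit, while yours supplies exactly the boundary bookkeeping the paper omits --- in particular, when $\mathbf{v}$ lies in $\Gamma_a$ or $\Gamma_b$ the matrix equation alone does not determine the torus coordinates of the product, so appealing to the full uniqueness clause of Proposition~\ref{prop:defgensta} (rather than only Proposition~\ref{prop:bdyuv}) is the right move, and your remark that $I_{V\otimes V}\in\Omega_b$, never $\Omega_a$, correctly rules out the one case where Proposition~\ref{prop:bdyuv} fails to give a unique matrix solution.
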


\begin{proof}
  Proposition~\ref{prop:gpostarcontin} shows that $\mathbf{I}_{d_1,
  d_2}^{\ast} =\mathbf{I}_{d_1, d_2}$. Therefore $\mathbf{I}_{d_1, d_2}
  \star \mathbf{I}_{d_1, d_2}$ is defined. It is of the form $\mathbf{J}=
  (J, D_1, D_2)$, where by the uniqueness assertion in
  Proposition~\ref{prop:bdyuv}, $J$ is the {\textit{unique}} element of
  $\overline{\Omega}$ such that $\llbracket I, J, I \rrbracket = 0$ is a
  normalized solution; thus $J = I$, and by Proposition~\ref{prop:defgensta}
  we have $(D_1, D_2) = \Delta (\mathbf{i}) = \Delta (I_{d_1, d_2}) = (d_1,
  d_2)$. Therefore $\mathbf{J}=\mathbf{I}_{d_1, d_2}$. We leave the
  remaining details to the reader.
\end{proof}

\begin{theorem}[Inverse Axiom]
  Let $\mathbf{u} \in \mathfrak{G}_{\operatorname{nf}}$. Then $\mathbf{u} \star
  \mathbf{u}' =\mathbf{I}_{\Delta (\mathbf{u}')}$ and $\mathbf{u}'
  \star \mathbf{u}=\mathbf{I}_{\Delta (u)}$ are defined. If $\mathbf{r}
  \star \mathbf{u}$ is defined then $(\mathbf{r} \star \mathbf{u}) \star
  \mathbf{u}' =\mathbf{r}$, while if $\mathbf{u} \star \mathbf{t}$ is
  defined then $\mathbf{u}' \star (\mathbf{u} \star \mathbf{t})
  =\mathbf{t}$.
\end{theorem}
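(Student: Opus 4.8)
The plan is to prove the four assertions in two stages: first that the products $\mathbf{u}\star\mathbf{u}'$ and $\mathbf{u}'\star\mathbf{u}$ are defined and equal the asserted idempotents, and then to deduce the two cancellation identities purely formally from the Associativity Axiom (Theorem~\ref{thm:assoax}) together with the properties of idempotents (Proposition~\ref{prop:idempotents}).

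First I would record two elementary facts. Since $N$, the products $a_ib_i$, and the $\ast$-operation are homogeneous in the matrix entries, the function $\Delta$ is invariant under rescaling the matrix part of an element of $\mathfrak{G}_{\operatorname{nf}}$; and by continuity from $\Omega^{\circ}$ the map $\mathbf{u}\mapsto\mathbf{u}^{\ast}$ of Proposition~\ref{prop:gpostarcontin} is an involution on all of $\mathfrak{G}_{\operatorname{nf}}$, the identity $(u^{\ast})^{\ast}=u$ holding on $\Omega^{\circ}$ because $\ast$ acts on the $2\times 2$ middle block by the adjugate, which is an involution on $2\times 2$ matrices. Because $\mathbf{u}'=\frac{1}{c_1(u)c_2(u)}\mathbf{u}^{\ast}$ differs from $\mathbf{u}^{\ast}$ only by a scalar, these give $\Delta(\mathbf{u}')=\Delta(\mathbf{u}^{\ast})$ and $(\mathbf{u}')^{\ast}=\frac{1}{c_1(u)c_2(u)}\mathbf{u}$, whence $\Delta((\mathbf{u}')^{\ast})=\Delta(\mathbf{u})$. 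The compatibility condition for $\mathbf{u}\star\mathbf{u}'$, namely $\Delta(\mathbf{u})=\Delta((\mathbf{u}')^{\ast})$, and that for $\mathbf{u}'\star\mathbf{u}$, namely $\Delta(\mathbf{u}')=\Delta(\mathbf{u}^{\ast})$, are therefore automatically satisfied, so both products are defined.

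Next I would identify the products, and I expect the cleanest route to bypass the explicit formula (\ref{eq:wbcdef}) and instead argue on the dense open set $\Omega^{\circ}$ before passing to the limit. For $\mathbf{u}\in\Omega^{\circ}$ the matrix $u$ is invertible and $u'=\frac{\det B(u)}{c_1(u)c_2(u)}u^{-1}$, so $uu'=u'u=\frac{\det B(u)}{c_1(u)c_2(u)}I_{V\otimes V}$ is a scalar multiple of the identity. Consequently $\llbracket u,I_{V\otimes V},u'\rrbracket=(uu')\otimes 1-1\otimes(u'u)=0$, so $I_{V\otimes V}$ solves the Yang-Baxter equation with outer terms $u,u'$; and since $c_1(u')=1/c_1(u)$ and $c_2(u')=1/c_2(u)$, it is exactly the normalized solution. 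Combining this with the bookkeeping $\Delta(\mathbf{u}\star\mathbf{u}')=\Delta(\mathbf{u}')$ from Proposition~\ref{prop:defgensta} gives $\mathbf{u}\star\mathbf{u}'=\mathbf{I}_{\Delta(\mathbf{u}')}$ on $\Omega^{\circ}$, and the same computation with the roles reversed gives $\mathbf{u}'\star\mathbf{u}=\mathbf{I}_{\Delta(\mathbf{u})}$. Because $\star$, $\ast$, the inverse, and $\Delta$ are all continuous and both products are defined throughout, and because $\Omega^{\circ}$ is dense in $\mathfrak{G}_{\operatorname{nf}}$ (Lemma~\ref{lem:cxdense}), both identities persist on all of $\mathfrak{G}_{\operatorname{nf}}$.

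Finally, the two cancellation identities follow formally. If $\mathbf{r}\star\mathbf{u}$ is defined then, since $\mathbf{u}\star\mathbf{u}'$ is also defined, Theorem~\ref{thm:assoax} gives $(\mathbf{r}\star\mathbf{u})\star\mathbf{u}'=\mathbf{r}\star(\mathbf{u}\star\mathbf{u}')=\mathbf{r}\star\mathbf{I}_{\Delta(\mathbf{u}')}$; and as $\mathbf{r}\star\mathbf{u}$ being defined forces $\Delta(\mathbf{r})=\Delta(\mathbf{u}^{\ast})=\Delta(\mathbf{u}')$, Proposition~\ref{prop:idempotents} shows this last product is defined and equals $\mathbf{r}$. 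Symmetrically, if $\mathbf{u}\star\mathbf{t}$ is defined then $\mathbf{u}'\star(\mathbf{u}\star\mathbf{t})=(\mathbf{u}'\star\mathbf{u})\star\mathbf{t}=\mathbf{I}_{\Delta(\mathbf{u})}\star\mathbf{t}=\mathbf{t}$. The main obstacle I anticipate is not any single computation but the continuity bookkeeping on the blown-up boundary: one must be sure the identities proven on $\Omega^{\circ}$ really extend to $\Gamma_a$, $\Gamma_b$, and $\Omega_B$, which requires knowing that $\ast$ is a genuine involution there (where it interchanges $\Gamma_a$ and $\Omega_B$) and that the torus coordinates $(d_1,d_2)$ transform consistently. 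This is exactly where the density statement of Lemma~\ref{lem:cxdense} and the continuity of $\star$ from Proposition~\ref{prop:defgensta} do the essential work.
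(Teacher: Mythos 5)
Your proposal is correct and follows essentially the same route as the paper: verify on the dense open set $\Omega^{\circ}$ that $u u' = u' u$ is a scalar matrix, so that $\llbracket u, I_{V\otimes V}, u'\rrbracket = 0$ with $I_{V\otimes V}$ the normalized solution, then extend to the boundary components by continuity via Lemma~\ref{lem:cxdense}, and obtain the cancellation identities formally from Theorem~\ref{thm:assoax} together with Proposition~\ref{prop:idempotents}. Your added bookkeeping (scale-invariance of $\Delta$, the involutivity of $\ast$, and the explicit check $c_1(u') = 1/c_1(u)$, $c_2(u') = 1/c_2(u)$) is sound and merely makes explicit what the paper leaves to the reader.
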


\begin{proof}
  First suppose that $u \in \Omega^{\circ}$. Then $u$ is invertible and
  \[ u' = \gamma (u) u^{- 1}, \qquad \gamma (u) := \frac{\det (B
     (u))}{c_1 (u) c_2 (u)} . \]
  We have (denoting $I_V$)
  \[ \llbracket u, I_{V \otimes V}, u^{- 1} \rrbracket = (u \otimes I) (I
     \otimes I \otimes I) (u^{- 1} \otimes I) - (I \otimes u^{- 1}) (I \otimes
     I \otimes I) (I \otimes u) = 0, \]
  so $u \star u^{- 1}$ is defined and is a constant multiple of $I_{V \otimes
  V}$. Now $u'$ is a constant multiple of $u^{- 1}$, so $u \star u'$ is a
  constant multiple of $I_{V \otimes V}$ and it may be checked that
  $\llbracket u, I, u' \rrbracket = 0$ is the normalized solution, so indeed
	$\mathbf{u} \star \mathbf{u}' = \mathbf{I}_{d_1, d_2}$ where $(d_1, d_2) = \Delta
  (\mathbf{u}') = \Delta (\mathbf{u}^{\ast})$.
  The last assertion follows from associativity and
  Proposition~\ref{prop:idempotents}.

	We have assumed that $u\in\Omega^\circ$. For $\mathbf{u}$ in one
	of the boundary components, By Lemma~\ref{lem:cxdense} we may chose a
	sequence $\{u_n\}\subset\Omega^\circ$ that converges to $\mathbf{u}$ and such
	that $\Delta(u_n)=\Delta(\mathbf{u})$. Then if $\mathbf{r}\star\mathbf{u}$ is
	defined, so is $\mathbf{r}\star u_n$, and we may deduce the general result
  by continuity. 
\end{proof}

From the last three results, we see that $\mathfrak{G}_{\operatorname{nf}}$ is a
groupoid. We will call the disjoint union $\mathfrak{G}_{\operatorname{ff}} \sqcup
\mathfrak{G}_{\operatorname{nf}}$ the {\textit{six-vertex groupoid}}, which accounts for
essentially all Yang-Baxter equations that we may construct from the six-vertex model.

\section{The Five-vertex Groupoid\label{sec:fivever}}
Let us consider the 5-vertex model in which $b_2 (v) = 0$ for all vertices.
Thus $N (v) = a_1 (v) a_2 (v) - c_1 (v) c_2 (v)$. Assuming that $a_1 (v)$ and
$a_2 (v)$ are nonzero, we still may define $v^{\ast}$ with
\begin{equation}
  \label{eq:fivestar} \begin{array}{c}
    a_1 (v^{\ast}) = \frac{c_1 (v) c_2 (v)}{a_1 (v)}, \qquad a_2 (v^{\ast}) =
    \frac{c_1 (v) c_2 (v)}{a_2 (v)},\\
    c_1 (v^{\ast}) = c_2 (v), \qquad c_2 (v^{\ast}) = c_1 (v), \qquad b_1
    (v^{\ast}) = - b_1 (v)
  \end{array}
\end{equation}
and $b_2 (v^{\ast}) = 0$. We have
\[ N (v^{\ast}) = - \frac{c_1 (v) c_2 (v)}{a_1 (v) a_2 (v)} N (v) . \]

There are parametrized Yang-Baxter equation of five-vertex matrices within
the free-fermionic groupoid. But we will ignore thes and instead exhibit a groupoid 
in the non-free-fermionic part, that is disjoint from the six-vertex groupoid
$\mathfrak{G}_{\textrm{nf}}$. Let $\Phi$ be the set of six-vertex
matrices such that $c_1 (v), c_2 (v), N (v), b_1 (v), a_1 (v), a_2 (v)$ are
all nonzero, but $b_2 (v) = 0$. Let $\Phi_b$ be the set such that $c_1 (v),
c_2 (v), a_1 (v), a_2 (v)$ are all nonzero but $b_1 (v) = b_2 (v) = 0$ and $N
(v) = a_1 (v) a_2 (v) - c_1 (v) c_2 (v) = 0$. (The set $\Phi_b$ coincides
with the set $\Omega_b$ in Table~\ref{tab:menagerie}.) Let 
$\overline{\Phi} = \Phi \cup \Phi_b$. In this section we will prove:

\begin{theorem}
  There exists a groupoid $\mathfrak{G}$ with a dominant birational morphism
  $\pi : \mathfrak{G} \rightarrow \overline{\Phi}$ such that $\llbracket \pi
  (u), \pi (u \star v), \pi (v) \rrbracket = 0$ if $u \star v$ is defined.
\end{theorem}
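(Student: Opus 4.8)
The plan is to mimic the construction of $\mathfrak{G}_{\operatorname{nf}}$, but with a \emph{one}-parameter object map, since on $\overline{\Phi}$ the coordinate $\Delta_2$ degenerates: on $\Phi$ we have $b_2=0$ and $N\neq 0$, so $\Delta_2 = N/(a_2 b_2)$ is infinite, while on $\Phi_b=\Omega_b$ the numerator and denominator of both $\Delta_1$ and $\Delta_2$ vanish. Accordingly I would set
\[
  \mathfrak{G} = \bigl\{ (u,d_1)\in\overline{\Phi}\times\mathbb{C}^{\times}\ \big|\ a_1(u)\,b_1(u)\,d_1 = N(u)\bigr\},
\]
with $\pi$ the projection onto the first factor. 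Over the dense open set $\Phi$ (where $a_1 b_1\neq 0$ and $N\neq 0$) the fibre is a single point with $d_1=\Delta_1(u)$, so $\pi$ is a dominant birational morphism; over $\Phi_b$, where $a_1 b_1 = N = 0$, the constraint is vacuous and the fibre is a whole $\mathbb{C}^{\times}$, which is the ``blow up'' of that boundary component. The object map is then $\Delta(\mathbf{u})=d_1$, taking values in $M=\mathbb{C}^{\times}$.

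Next I would extend the involution $u\mapsto u^{\ast}$ of \eqref{eq:fivestar} and the inverse $\mathbf{u}'=\tfrac{1}{c_1(u)c_2(u)}\mathbf{u}^{\ast}$ to all of $\mathfrak{G}$, exactly as in Proposition~\ref{prop:gpostarcontin} and its corollaries. Here the situation is simpler than the six-vertex case: formula \eqref{eq:fivestar} already makes $u^{\ast}$ well defined on $\Phi_b=\Omega_b$ (there $a_1(u^{\ast})=c_1 c_2/a_1 = a_2$ since $a_1 a_2 = c_1 c_2$), so there is no analogue of the troublesome component $\Omega_a$, and $u\mapsto u^{\ast}$ preserves $\overline{\Phi}$. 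On the blow-up coordinate I would define $d_1^{\ast}=\tfrac{a_1(u)}{a_2(u)}d_1$, which is forced by continuity from $\Delta_1(u^{\ast})=\tfrac{a_1(u)}{a_2(u)}\Delta_1(u)$ on $\Phi$; thus $\mathbf{u}^{\ast}=(u^{\ast},d_1^{\ast})\in\mathfrak{G}$.

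I would then define the composition by declaring $\mathbf{u}\star\mathbf{v}$ to be defined precisely when $\Delta(\mathbf{u})=\Delta(\mathbf{v}^{\ast})$, with $\mathbf{u}\star\mathbf{v}=\mathbf{w}$ the normalized Yang-Baxter solution of Theorem~\ref{thm:wybe}, blown up so that $\Delta(\mathbf{w})=\Delta(\mathbf{v})$ when $w\in\Phi_b$, in the manner of Proposition~\ref{prop:defgensta}. Two points make this well posed for five-vertex matrices. First, when $b_2(u)=b_2(v)=0$ the second equation of the solvability condition \eqref{eq:wcond} is the identity $0=0$, while the first reduces exactly to $\Delta_1(u)=\Delta_1(v^{\ast})$; and formula \eqref{eq:wbcdef} gives $b_2(w)=0$ automatically, so the five-vertex condition is preserved. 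Second, and this is the crux, I must show the solution stays inside $\overline{\Phi}$, i.e. that $w\in\Phi$ or $w\in\Phi_b$ and never an extraneous degeneration. From \eqref{eq:wbcdef} the weights $a_1(w)=a_1(u)a_1(v)$, $a_2(w)=a_2(u)a_2(v)$, $c_1(w)=c_1(u)c_1(v)$, $c_2(w)=c_2(u)c_2(v)$ are all nonzero, so the only issue is the joint vanishing of $b_1(w)$ and $N(w)$. The first identity in \eqref{eq:dvwid}, namely $a_1(v)b_1(v)N(w)=a_1(w)b_1(w)N(v)$ --- whose derivation in the second proof of Proposition~\ref{prop:odelto} uses only $a_i(v),c_i(v)\neq 0$ and not $b_2(v)\neq 0$ --- shows that $N(w)=0$ if and only if $b_1(w)=0$. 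Hence $w\in\Phi$ when $N(w)\neq 0$ and $w\in\Phi_b$ when $N(w)=0$. This closure is the main obstacle, and it is exactly where the defining hypotheses of $\Phi$ and $\Phi_b$ are used.

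Finally, the Yang-Baxter equation $\llbracket \pi(u),\pi(u\star v),\pi(v)\rrbracket=0$ holds by Theorem~\ref{thm:wybe} on the dense set $\Phi$ and everywhere by continuity, and the groupoid axioms follow from the same generic-position-then-continuity strategy used for $\mathfrak{G}_{\operatorname{nf}}$: the analogue of Lemma~\ref{lem:cxdense} lets me approximate any $\mathbf{u}\in\mathfrak{G}$ by a sequence in $\Phi$ with matching object map, associativity is checked in general position as in Proposition~\ref{prop:gpogenasso} and extended as in Theorem~\ref{thm:assoax}, and the idempotents $\mathbf{I}_{d_1}=(I_{V\otimes V},d_1)$ together with the inverse law are handled as in Proposition~\ref{prop:idempotents} and the Inverse Axiom. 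Because $\overline{\Phi}$ has only the single well-behaved boundary stratum $\Phi_b$ (of $\Omega_b$-type, with no $\Omega_a$-type component), the edge-case bookkeeping is strictly lighter than in the six-vertex construction, so I expect no new difficulty beyond the closure step above.
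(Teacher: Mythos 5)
Your proposal is correct and is essentially the paper's own construction: the same blow-up $\mathfrak{G}=\{(v,\varepsilon)\in\overline{\Phi}\times\mathbb{C}^{\times}\mid N(v)=a_1(v)b_1(v)\varepsilon\}$, the same one-parameter object map $\Delta_1$, the same extension $\Delta_1(\mathbf{v}^{\ast})=\frac{a_1(v)}{a_2(v)}\Delta_1(\mathbf{v})$, the same composition law with $\Delta_1(\mathbf{w})=\Delta_1(\mathbf{v})$, and the same generic-position-plus-continuity proofs of the Yang--Baxter identity and associativity. Your explicit closure argument that $w\in\overline{\Phi}$ (via the identity $a_1(v)b_1(v)N(w)=a_1(w)b_1(w)N(v)$, which does hold in the five-vertex setting) fills in a step the paper dispatches with ``proved similarly to Theorem~\ref{thm:wybe}.''
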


Now $\Delta_1 (v)$ is undefined on the boundary component $\Phi_b$. But on
$\Phi$ we have
\[ \Delta_1 (v) = \frac{N (v)}{a_1 (v) b_1 (v)}, \qquad \Delta_1 (v^{\ast}) =
   \frac{N (v)}{a_2 (v) b_1 (v)} . \]

\begin{proposition}
  Let $u, v \in \Phi$. A necessary and sufficient condition for there to exist
  $w \in S$ such that $\llbracket u, w, v \rrbracket = 0$ is that $\Delta_1
  (u) = \Delta_1 (v^{\ast})$. If this identity is satisfied, then $w$ is
  determined up to a constant multiple, $w \in \overline{\Phi}$ and it may be
  normalized so
  \begin{equation}
    \label{eq:fivemult} \begin{array}{c}
      c_1 (w) = c_1 (u) c_1 (v), \qquad c_2 (w) = c_2 (u) c_2 (v),\\
      a_1 (w) = a_1 (u) a_1 (v), \qquad a_2 (w) = a_2 (u) a_2 (v),\\
      b_1 (w) = b_1 (u) a_1 (v) + a_1 (u^{\ast}) b_1 (v) = a_2 (u) b_1 (v) +
      b_1 (u) a_2 (v^{\ast}) .
    \end{array}
  \end{equation}
  Here the two expressions for $b_1 (w)$ are equivalent on the assumption that
  $\Delta_1 (v) = \Delta_1 (u^{\ast})$. If $w\in\Phi$ then
  \[ \Delta_1 (w) = \Delta_1 (v), \qquad \Delta_1 (w^{\ast}) = \Delta_1
     (u^{\ast}) . \]
\end{proposition}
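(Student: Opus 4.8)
The plan is to obtain everything by specializing Theorem~\ref{thm:wybe} to the five-vertex locus $b_2(u)=b_2(v)=0$, and then to identify which boundary component the solution lands in by means of an identity modeled on (\ref{eq:dvwid}). First I would invoke Theorem~\ref{thm:wybe}, which applies because $u,v\in\Phi$ guarantees that $a_1(u),a_2(u),a_1(v),a_2(v)$ are nonzero. The second equation in the existence condition (\ref{eq:wcond}) reads $N(u)b_2(v)/a_2(u)=N(v)b_2(u)/a_1(v)$, and both sides vanish identically since $b_2(u)=b_2(v)=0$, so it imposes nothing. The first equation $N(u)b_1(v)/a_1(u)=N(v)b_1(u)/a_2(v)$ is exactly $\Delta_1(u)=\Delta_1(v^\ast)$ after substituting (\ref{eq:defdel1}) and (\ref{eq:delustar}). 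This yields the claimed necessary-and-sufficient criterion and the uniqueness of $w$ up to scalar.

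Next I would read off the normalized solution from (\ref{eq:wbcdef}). Setting $b_2(u)=b_2(v)=0$ collapses the weights to $a_1(w)=a_1(u)a_1(v)$, $a_2(w)=a_2(u)a_2(v)$ and forces $b_2(w)=0$, while the two displayed expressions for $b_1(w)$ persist; imposing the normalization (\ref{eq:normalcond}) gives $c_1(w),c_2(w)$. The equivalence of the two expressions for $b_1(w)$ is again precisely the hypothesis: subtracting them and using $a_1(u^\ast)=c_1(u)c_2(u)/a_1(u)$ and $a_2(v^\ast)=c_1(v)c_2(v)/a_2(v)$ (from (\ref{eq:fivestar})) reduces the difference to a multiple of $N(u)b_1(v)/a_1(u)-N(v)b_1(u)/a_2(v)$, which vanishes. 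At this point $w$ is a five-vertex matrix with $a_1(w),a_2(w),c_1(w),c_2(w)$ nonzero and $b_2(w)=0$.

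The crux is to decide whether $w\in\Phi$ or $w\in\Phi_b$, that is, to show that $N(w)=0$ holds if and only if $b_1(w)=0$. For this I would establish the five-vertex analog of (\ref{eq:dvwid}),
\[ a_1(v)\,b_1(v)\,N(w)=a_1(w)\,b_1(w)\,N(v), \]
directly from the explicit weights: expanding $N(w)=a_1(w)a_2(w)-c_1(w)c_2(w)$ and clearing $a_1(u)$ through $a_1(u)b_1(w)=a_1(u)b_1(u)a_1(v)+c_1(u)c_2(u)b_1(v)$, the difference of the two sides factors and collapses to $0$ after one application of $\Delta_1(u)=\Delta_1(v^\ast)$. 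Since $a_1(v)b_1(v)\neq0$, $a_1(w)\neq0$ and $N(v)\neq0$, this forces $N(w)=0$ exactly when $b_1(w)=0$; hence $w\in\Phi$ in the generic case and $w\in\Phi_b$ otherwise, so $w\in\overline{\Phi}$. The same identity immediately gives $\Delta_1(w)=N(w)/(a_1(w)b_1(w))=N(v)/(a_1(v)b_1(v))=\Delta_1(v)$ when $w\in\Phi$, and the companion identity $a_2(u)b_1(u)N(w)=a_2(w)b_1(w)N(u)$, which follows from the first identity together with the hypothesis, yields $\Delta_1(w^\ast)=\Delta_1(u^\ast)$ via (\ref{eq:delustar}).

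I expect the main obstacle to be the boundary identity of the previous paragraph. Unlike Proposition~\ref{prop:odelto}, which established (\ref{eq:dvwid}) on the open set $\Omega^\circ$ where all six weights are nonzero, the five-vertex matrices lie on the boundary $b_2=0$, so the result is not a direct corollary of that Proposition. One could in principle recover it by a continuity argument, treating $\Phi$-matrices as limits of $\Omega^\circ$-matrices with $b_2\to0$; but since the second component $\Delta_2$ of the object map degenerates there, the cleanest route is the direct computation sketched above, whose only nonformal input is the single use of $\Delta_1(u)=\Delta_1(v^\ast)$.
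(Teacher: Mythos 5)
Your proposal is correct, but it takes a genuinely different route from the paper's. The paper's own proof is by redoing the computation of Theorem~\ref{thm:wybe} inside the five-vertex model (``proved similarly to Theorem~\ref{thm:wybe}''), i.e., re-solving the specialized system of equations from scratch with $b_2(u)=b_2(v)=0$. You instead apply Theorem~\ref{thm:wybe} as a black box, which is legitimate since $\Phi\subset S$ and $a_1(u),a_2(u),a_1(v),a_2(v)\neq 0$ there: condition (\ref{eq:wcond}) then collapses exactly as you say (the second equation is vacuous, the first is $\Delta_1(u)=\Delta_1(v^\ast)$ after dividing by the nonzero $b_1(u)b_1(v)$), and (\ref{eq:fivemult}) with $b_2(w)=0$ drops out of (\ref{eq:wbcdef}) and (\ref{eq:normalcond}). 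The real added value of your route is the explicit identity $a_1(v)\,b_1(v)\,N(w)=a_1(w)\,b_1(w)\,N(v)$ and its companion $a_2(u)\,b_1(u)\,N(w)=a_2(w)\,b_1(w)\,N(u)$, each of which I verified expands and vanishes after a single use of the hypothesis $a_2(v)N(u)b_1(v)=a_1(u)N(v)b_1(u)$; these simultaneously give the dichotomy $N(w)=0\iff b_1(w)=0$ (hence $w\in\overline{\Phi}$, since $a_1(w),a_2(w),c_1(w),c_2(w)$ are visibly nonzero products) and the transformation laws $\Delta_1(w)=\Delta_1(v)$, $\Delta_1(w^\ast)=\Delta_1(u^\ast)$ when $w\in\Phi$. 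In the six-vertex setting the paper obtained the analogous facts via Proposition~\ref{prop:odelto} and Lemma~\ref{lem:omegaab}, where a continuity argument on the irreducible variety $X$ was available; your observation that this continuity route is unreliable on the locus $b_2=0$ (where $\Delta_2$ degenerates) is apt, and your direct identity is the right substitute, buying both economy (no recomputation of the twelve equations) and a boundary argument that works without limits. One detail worth flagging: the proposition as stated asserts the two expressions for $b_1(w)$ agree ``on the assumption that $\Delta_1(v)=\Delta_1(u^\ast)$,'' but as your subtraction shows, the condition actually used is $\Delta_1(u)=\Delta_1(v^\ast)$, consistent with the existence criterion in the first sentence; your proof silently uses the correct form, and the hypothesis as printed appears to be a typo.
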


\begin{proof}
  This is proved similarly to Theorem~\ref{thm:wybe}.
\end{proof}

Now to produce the groupoid let $\mathfrak{G}= \left\{ (v, \varepsilon) \in
\overline{\Phi} \times \mathbb{C}^{\times} |N (v) = a_1 (v) b_1 (v)
\varepsilon \right\}$. Note that the fiber of the projection $\mathfrak{G}
\rightarrow \overline{\Phi}$ onto the first component consists of $(v,
\Delta_1 (v))$ over the open set $\Phi$. On the other hand if $v \in \Phi_b$
the fiber over $v$ if $v \times \mathbb{C}^{\times}$. By abuse of notation if
$v \in \Phi$ then we will identify $v$ with its image $(v, \Delta_1 (v)) \in
\mathfrak{G}$.

Let us introduce the notation $\mathbf{v} = (v, \varepsilon)$ for elements
of $\mathfrak{G}$, where $v \in \overline{\Phi}$ and $\varepsilon \in
\mathbb{C}^{\times}$. Then we extend $\Delta_1$ to all of $\mathfrak{G}$ by
$\Delta_1 (\mathbf{v}) = \varepsilon$. We extend the map $v \mapsto
v^{\ast}$ to all of $\mathfrak{G}$ as follows. If $\mathbf{v}= (v, \Delta_1
(\mathbf{v}))$ with $v \in \Phi_b$ then we define $\mathbf{v}^{\ast} =
(v^{\ast}, \Delta_1 (\mathbf{v}^{\ast}))$ where $v^{\ast}$ is given by the
formula (\ref{eq:fivestar}), and
\[ \Delta_1 (\mathbf{v}^{\ast}) = \frac{a_1 (v)}{a_2 (v)} \Delta
   (\mathbf{v}) . \]

The composition $\mathbf{u} \star \mathbf{v}$ is defined only if $\Delta_1
(\mathbf{u}) = \Delta_1 (\mathbf{v}^{\ast})$. In this case we define
$\mathbf{u} \star \mathbf{v}=\mathbf{w}$ where $\mathbf{w}= (w,
\Delta_1 (\mathbf{w}))$ with $w$ defined by the equations
(\ref{eq:fivemult}) and $\Delta_1 (\mathbf{w}) = \Delta_1 (\mathbf{v})$.

The groupoid also has an inverse map, which we will denote $\mathbf{v}
\mapsto \mathbf{v}'$, in order to avoid confusion with the matrix inverse.
If $v \in \Phi$, then $v^{- 1}, v^{\ast}$ and $v'$ are scalar multiples of one
another. So $\mathbf{v}'$ is not quite the same $\mathbf{v}^{\ast}$ but is
a different normalization: if $\mathbf{v}= (v, \varepsilon)$ then $v' =
\frac{1}{c_1 (v) c_2 (v)} v^{\ast}$ and $\mathbf{v}' = (v', \Delta_1
(\mathbf{v}^{\ast}))$.

The groupoid $\mathfrak{G}$ is topologized so that $\Delta_1$ is continuous.
This means that if $\mathbf{v}= (v, \varepsilon)$ a sequence $\{ v_n \}
\subset \Phi$ converges to $\mathbf{v}$ if $v_n \rightarrow v$ and $\Delta_1
(v_n) \rightarrow \varepsilon$. All of the functions $a_1, a_2, b_1, c_1, c_2,
N, \Delta_1, \mathbf{v}^{\ast}, \mathbf{v}'$ are continuous
on~$\mathfrak{G}$.

\begin{lemma}
  \label{lem:fivecrit}The composition $\mathbf{u} \star \mathbf{v}$ is
  defined if and only if $\Delta_1 (\mathbf{u}) = \Delta_1
  (\mathbf{v}^{\ast})$. If this is so, and $\mathbf{w}=\mathbf{u} \star
  \mathbf{v}$ then $\Delta_1 (\mathbf{w}) = \Delta_1 (\mathbf{v})$ and
  $\Delta_1 (\mathbf{w}^{\ast}) = \Delta_1 (\mathbf{u}^{\ast})$.
\end{lemma}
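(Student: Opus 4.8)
The plan is to follow the template already established for the six-vertex groupoid: reduce everything to the generic stratum $\Phi$, and then extend to the boundary component $\Phi_b$ by continuity. The biconditional characterizing when $\mathbf{u} \star \mathbf{v}$ is defined is a restatement of the definition of $\star$ given above the lemma, so the real content is to confirm that the prescribed $\mathbf{w}$ is a bona fide element of $\mathfrak{G}$ on every stratum, and to establish the two transformation identities for $\Delta_1$.

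First I would dispose of the generic case $\mathbf{u}, \mathbf{v} \in \Phi$, where the fiber of $\pi$ is a single point. Here the preceding Proposition already does the work: it produces the unique normalized $w$ via (\ref{eq:fivemult}), places $w \in \overline{\Phi}$, and---provided $w \in \Phi$---yields $\Delta_1(w) = \Delta_1(v)$ together with $\Delta_1(w^{\ast}) = \Delta_1(u^{\ast})$. The first of these shows that the $\varepsilon$-coordinate $\Delta_1(\mathbf{w}) = \Delta_1(\mathbf{v})$ built into the definition of $\star$ coincides with the intrinsic value $N(w)/(a_1(w)b_1(w))$, so the pair $(w, \Delta_1(\mathbf{v}))$ genuinely satisfies the defining constraint $N(w) = a_1(w)b_1(w)\Delta_1(\mathbf{v})$ of $\mathfrak{G}$; this confirms $\mathbf{w} \in \mathfrak{G}$. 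The second is exactly the asserted identity $\Delta_1(\mathbf{w}^{\ast}) = \Delta_1(\mathbf{u}^{\ast})$ in this case, while $\Delta_1(\mathbf{w}) = \Delta_1(\mathbf{v})$ holds by definition on all strata.

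To pass to the boundary I would construct, by the same method as in Lemma~\ref{lem:cxdense}, sequences $\{u_n\}, \{v_n\} \subset \Phi$ converging to $\mathbf{u}$ and $\mathbf{v}$ with $\Delta_1$ (and $\Delta_1$ of the $\ast$) matching at every stage. The hypothesis $\Delta_1(\mathbf{u}) = \Delta_1(\mathbf{v}^{\ast})$ then transfers to $\Delta_1(u_n) = \Delta_1(v_n^{\ast})$, so $w_n := u_n \star v_n$ is defined for each $n$. Arguing as in the proof of Theorem~\ref{thm:assoax}, I would perturb the sequences so that $w_n \in \Phi$, which lets me invoke the generic identities $\Delta_1(w_n) = \Delta_1(v_n)$ and $\Delta_1(w_n^{\ast}) = \Delta_1(u_n^{\ast})$. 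Since the weights in (\ref{eq:fivemult}) depend continuously on $(u_n, v_n)$ we have $w_n \to w$, and passing to the limit---using the continuity of $\Delta_1$, of $\mathbf{v} \mapsto \mathbf{v}^{\ast}$, and of $\mathbf{v} \mapsto \mathbf{v}'$ on $\mathfrak{G}$---gives $\Delta_1(\mathbf{w}^{\ast}) = \Delta_1(\mathbf{u}^{\ast})$ and exhibits the limit as a well-defined element of $\mathfrak{G}$.

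The main obstacle I anticipate is precisely this perturbation step: the Proposition supplies the $\Delta_1$ identities only when the product lands in the open stratum $\Phi$, whereas for $\mathbf{u}$ or $\mathbf{v}$ over $\Phi_b$ the natural product may degenerate into $\Phi_b$, where the fiber of $\pi$ is an entire torus and the intrinsic formula $N/(a_1 b_1)$ for $\Delta_1$ is indeterminate. I would therefore have to verify that the approximating products $w_n$ can be kept inside $\Phi$, and that the limiting $\varepsilon$-coordinate is \emph{forced} by continuity to equal $\Delta_1(\mathbf{v})$ rather than surviving as a free torus parameter---this is what pins down $\mathbf{w}$ uniquely as an element of $\mathfrak{G}$.
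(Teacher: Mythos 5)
Your proposal is correct and takes essentially the same route as the paper: the biconditional is definitional, the two $\Delta_1$ identities hold on the dense open stratum $\Phi$ by the preceding Proposition, and the general case follows by continuity of $\Delta_1$, $\star$, and $\mathbf{v}\mapsto\mathbf{v}^{\ast}$ on $\mathfrak{G}$. The paper's own proof is exactly this two-line argument; the approximation and perturbation details you spell out (in the style of Lemma~\ref{lem:cxdense} and Theorem~\ref{thm:assoax}), including the consistency of the $\varepsilon$-coordinate over the torus fibers, are left implicit there, so your version is the same proof made explicit.
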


\begin{proof}
  The first statement is true by definition. For the second statement, these
  identities are true on the dense open subset $\Phi$ by
  Theorem~\ref{thm:wybe} and the general case follows by continuity.
\end{proof}

We note that the operations $\mathbf{v} \mapsto \mathbf{v}^{\ast}$,
$\mathbf{v} \mapsto \mathbf{v}'$ and $(\mathbf{u}, \mathbf{v}) \mapsto
\mathbf{u} \star \mathbf{v}$ are continuous. To confirm that we have a
groupoid we need to prove associativity.

\begin{proposition}
  Let $\mathbf{r}, \mathbf{t}, \mathbf{u} \in \mathfrak{G}$ such that
  $\mathbf{r} \star \mathbf{t}$ and $\mathbf{t} \star \mathbf{u}$ are
  defined. Then $(\mathbf{r} \star \mathbf{t}) \star \mathbf{u}$ and
  $\mathbf{r} \star (\mathbf{t} \star \mathbf{u})$ are both defined, and
  they are equal.
\end{proposition}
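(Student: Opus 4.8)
The plan is to prove associativity for the five-vertex groupoid $\mathfrak{G}$ by the same continuity strategy used for the six-vertex case in Theorem~\ref{thm:assoax}, reducing the general statement to the generic case over the dense open set $\Phi$. First I would establish the generic associativity: suppose $r, t, u \in \Phi$ (all in the open stratum, so $b_2 = 0$ but $a_1, a_2, b_1, c_1, c_2, N$ all nonzero) with $r \star t$ and $t \star u$ defined. Setting $\mathbf{s} = \mathbf{r} \star \mathbf{t}$ and $\mathbf{v} = \mathbf{t} \star \mathbf{u}$, I would first verify that both $\mathbf{s} \star \mathbf{u}$ and $\mathbf{r} \star \mathbf{v}$ are defined. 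For this I use Lemma~\ref{lem:fivecrit}: the hypothesis $\mathbf{r} \star \mathbf{t}$ defined gives $\Delta_1(\mathbf{r}) = \Delta_1(\mathbf{t}^{\ast})$ and $\Delta_1(\mathbf{s}) = \Delta_1(\mathbf{t})$, while $\mathbf{t} \star \mathbf{u}$ defined gives $\Delta_1(\mathbf{t}) = \Delta_1(\mathbf{u}^{\ast})$ and $\Delta_1(\mathbf{v}) = \Delta_1(\mathbf{u})$. Chaining these, $\Delta_1(\mathbf{s}) = \Delta_1(\mathbf{t}) = \Delta_1(\mathbf{u}^{\ast})$ so $\mathbf{s} \star \mathbf{u}$ is defined, and an application of the five-vertex analogue of Lemma~\ref{lem:sixcases} (rotating the relation $\llbracket t, v, u \rrbracket = 0$) yields $\Delta_1(\mathbf{t}^{\ast}) = \Delta_1(\mathbf{v}^{\ast})$, whence $\Delta_1(\mathbf{r}) = \Delta_1(\mathbf{v}^{\ast})$ and $\mathbf{r} \star \mathbf{v}$ is defined.

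Next I would show the two triple products agree on $\Phi$ by direct computation with the multiplicative weight formulas (\ref{eq:fivemult}). Because the $a_i$, $c_i$ weights multiply straightforwardly ($a_i(w) = a_i(u) a_i(v)$, $c_i(w) = c_i(u) c_i(v)$), associativity for these four entries is immediate from associativity of multiplication in $\mathbb{C}^{\times}$. The only substantive check is for $b_1$: using the formula $b_1(w) = b_1(u) a_1(v) + a_1(u^{\ast}) b_1(v) = a_2(u) b_1(v) + b_1(u) a_2(v^{\ast})$, I would expand $b_1(\mathbf{s} \star \mathbf{u})$ and $b_1(\mathbf{r} \star \mathbf{v})$ and verify they coincide term by term, mirroring the $b_1(s \star v) = b_1(r \star u)$ computation in Proposition~\ref{prop:gpogenasso}. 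This will use the identity $b_1(r^{\ast}) = -b_1(r)$ from (\ref{eq:fivestar}) together with the relation $s^{\ast} = t^{\ast} \star r^{\ast}$. Finally, since both triple products have equal weights and satisfy the same normalization and the same value $\Delta_1 = \Delta_1(\mathbf{u})$, they are equal as elements of $\mathfrak{G}$.

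With generic associativity in hand, I would pass to the general case by continuity exactly as in Theorem~\ref{thm:assoax}. Given arbitrary $\mathbf{r}, \mathbf{t}, \mathbf{u} \in \mathfrak{G}$ (possibly with $\mathbf{t}$, or the products, landing in the boundary component $\Gamma_b = \pi^{-1}(\Phi_b)$), I invoke Lemma~\ref{lem:cxdense} to choose sequences $\{r_n\}, \{t_n\}, \{u_n\} \subset \Phi$ with $r_n \to \mathbf{r}$, $t_n \to \mathbf{t}$, $u_n \to \mathbf{u}$ and $\Delta_1$ preserved along each sequence, so that $r_n \star t_n$ and $t_n \star u_n$ remain defined for each $n$. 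After perturbing the sequences slightly (keeping $\{t_n\}$ fixed) so that the intermediate products $s_n = r_n \star t_n$ and $v_n = t_n \star u_n$ land back in $\Phi$, the generic case applies to give $(r_n \star t_n) \star u_n = r_n \star (t_n \star u_n)$. Taking limits, using the continuity of $\star$, of $\mathbf{v} \mapsto \mathbf{v}^{\ast}$, and of $\Delta_1$ on $\mathfrak{G}$ (all asserted above), yields $(\mathbf{r} \star \mathbf{t}) \star \mathbf{u} = \mathbf{r} \star (\mathbf{t} \star \mathbf{u})$ in the general case.

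I expect the main obstacle to be the perturbation step in the continuity argument: one must verify that the approximating sequences can be chosen so that the intermediate composites $s_n, v_n$ stay in the open stratum $\Phi$ rather than degenerating into $\Phi_b$, where the generic weight identities fail to apply directly. This is precisely the delicate point flagged in Theorem~\ref{thm:assoax}, and the five-vertex setting is simpler in that there is only one boundary component $\Phi_b$ to avoid; nonetheless the argument depends on the genericity built into Lemma~\ref{lem:cxdense} and on $\Phi$ being dense and open, which follows from the irreducibility of the underlying variety.
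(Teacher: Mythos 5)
Your proposal is correct and follows essentially the same route as the paper's proof: definedness of both triple products via the $\Delta_1$ identities of Lemma~\ref{lem:fivecrit}, a direct weight computation on the dense open stratum $\Phi$ (trivial for $a_i, c_i$ since they multiply, with the substantive $b_1$ check adapting Proposition~\ref{prop:gpogenasso} via $s^{\ast} = t^{\ast} \star r^{\ast}$ and $b_1(r^{\ast}) = -b_1(r)$), and a continuity argument to reach the boundary. Two cosmetic remarks: the second half of Lemma~\ref{lem:fivecrit} already gives $\Delta_1((\mathbf{t} \star \mathbf{u})^{\ast}) = \Delta_1(\mathbf{t}^{\ast})$ directly, so your detour through a five-vertex analogue of Lemma~\ref{lem:sixcases} is unnecessary, and Lemma~\ref{lem:cxdense} as stated concerns $\mathfrak{G}_{\operatorname{nf}}$ (its approximating sequences have $b_2 \neq 0$), so strictly you need its easy five-vertex analogue with $b_2 \equiv 0$, obtained by sending $b_1(r_n) \to 0$ and solving for $c_2(r_n)$ --- a one-parameter version of the same construction.
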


\begin{proof}
  this follows formally from Lemma~\ref{lem:fivecrit} that $(\mathbf{r}
  \star \mathbf{t}) \star \mathbf{u}$ and $\mathbf{r} \star
  (\mathbf{t} \star \mathbf{u})$ are defined, because $\Delta_1
  (\mathbf{r} \star \mathbf{t}) = \Delta_1 (\mathbf{t}) = \Delta_1
  (\mathbf{u}^{\ast})$ and $\Delta (\mathbf{r}) = \Delta_1
  (\mathbf{t}^{\ast}) = \Delta_1 ((\mathbf{t} \star
  \mathbf{u})^{\ast})$.
  
  To show that they are equal, we prove this first for $\mathbf{r}= (r,
  \Delta_1 (r))$, $\mathbf{t}= (t, \Delta_1 (t))$ and $\mathbf{u}= (u,
  \Delta_1 (u))$ in the interior, so $r, t, u \in \Phi$. We assume that these
  are in general position so that also $r \star t, t \star u \in \Phi$. To
  show that $(r \star t) \star u = r \star (t \star u)$, the computation in
  Proposition~\ref{prop:gpogenasso} may be adapted. We have easily
  \[ a_1 ((r \star t) \star u) = a_1 (r \star t) a_1 (u) = a_1 (r) a_1 (t) a_1
     (u) = a_1 (r \star (t \star u)), \]
  and similarly for $a_2, c_1$ and $c_2$. For $b_1$:
  \[ \begin{array}{lll}
       b_1 ((r \star t) \star v) & = & a_1 (t^{\ast} \star r^{\ast}) b_1 (v) +
       b_1 (r \star t) a_1 (v)\\
       & = & a_1 (r^{\ast}) a_1 (t^{\ast}) b_1 (v) + (b_1 (r) a_1 (t) + a_1
       (r^{\ast}) b_1 (t)) a_1 (v)\\
       & = & a_1 (r^{\ast}) (a_1 (t^{\ast}) b_1 (v) + b_1 (t) a_1 (v)) + b_1
       (r) a_1 (t) a_1 (v)\\
       & = & a_1 (r^{\ast}) b_1 (t \star v) + b_1 (r) a_1 (t \star v)\\
       & = & b_1 (r \star (t \star v)) .
     \end{array} \]
  These calculations show that $(\mathbf{r} \star \mathbf{t}) \star
  \mathbf{u}=\mathbf{r} \star (\mathbf{t} \star \mathbf{u})$ when
  $\mathbf{r}, \mathbf{t}$ and $\mathbf{u}$ are in a dense open set. The
  general case follows by continuity.
\end{proof}

\section{Solvable Lattice Models\label{sec:solvable}}

Assume we have a groupoid parametrized Yang-Baxter equation $\pi:\mathfrak{G}\to
\operatorname{End}(V\otimes V)$ with object map $\Delta:\mathfrak{G}\to M$.
We will construct
solvable lattice models whose Boltzmann weights come from the
model. We will show that these models have both row and column
solvability. These are familiar results for models associated
with \textit{group} parametrized Yang-Baxter equations.
(Compare~\cite{BBF11,KuperbergASM} for example.)
However it is worth checking that such standard solvability
arguments work for groupoid parametrized Yang-Baxter equations,
and can produce models that are not possible using only a
subgroup of a given groupoid.

The model will be based on an $m\times n$ grid. At the
vertex in the $i$-th row and the $j$-th column
vertex we wish to pick an element $\gamma_{ij}$ such
that the Boltzmann weights at the vertex are the matrix
coefficients of $\pi(\gamma_{ij})$ in the usual way.

\[
    \begin{tikzpicture}[scale=.6,every node/.style={scale=.7}]
    \foreach \i in {1,3,5,7} {
    \draw (0,\i) to (8,\i);
    \draw (\i,0) to (\i,8);
    \foreach \j in {1,3,5,7}
    \draw[fill=black] (\i,\j) circle (.1);
}
        \foreach \i/\j/\c in {1/1/41,1/3/31,1/5/21,1/7/11,3/1/42,3/3/32,3/5/22,3/7/12,
        5/1/43,5/3/33,5/5/23,5/7/13, 7/1/44,7/3/34,7/5/24,7/7/14}
        \node at (\i.7,\j.4) {$\pi(\gamma_{\c})$};
\end{tikzpicture}
\]

Before we show how that $\gamma_{ij}$ can be produced,
we consider the properties they need for solvability.
Then we will show a systematic procedure for producing
such $\gamma_{ij}$.

We will say that the model is \textit{row solvable} if for every
pair of adjacent rows we may attach an R-matrix $\pi(\rho)$ for some
$\rho\in\mathfrak{G}$ such that we can apply the Yang-Baxter equation.
Let us illustrate this with a $2\times2$ grid. After attaching  the
R-matrix, the grid looks like this:
\[
\begin{tikzpicture}[every node/.style={scale=.75}]
\draw[thick] (0,1) \too (2,3) \too (6.5,3);
\draw[thick] (0,3) \too (2,1) \too (6.5,1);
\draw[thick] (3,0) -- (3,4);
\draw[thick] (5,0) -- (5,4);
\foreach \i/\j in {1/2,3/1,3/3,5/1,5/3}
	\draw[fill=black] (\i,\j) circle (.04);
	\node at (3.5,2.75) {$\pi(\gamma_{i,1})$};
	\node at (5.5,2.75) {$\pi(\gamma_{i,2})$};
	\node at (3.6,0.75) {$\pi(\gamma_{i+1,1})$};
	\node at (5.6,0.75) {$\pi(\gamma_{i+1,2})$};
	\node at (1.4,2) {$\pi(\rho)$};
\end{tikzpicture}
\]
After applying the Yang-Baxter equation:
\[
\begin{tikzpicture}[every node/.style={scale=.8}]
\draw[thick] (0,1) \too (2,1) \too (4,3) \too (6.5,3);
\draw[thick] (0,3) \too (2,3) \too (4,1) \too (6.5,1);
\draw[thick] (1,0) -- (1,4);
\draw[thick] (5,0) -- (5,4);
\foreach \i/\j in {3/2,1/1,1/3,5/1,5/3}
	\draw[fill=black] (\i,\j) circle (.04);
	\node at (1.7,2.75) {$\pi(\gamma_{i+1,1})$};
	\node at (5.5,2.75) {$\pi(\gamma_{i,2})$};
	\node at (1.5,0.75) {$\pi(\gamma_{i,1})$};
	\node at (5.7,0.75) {$\pi(\gamma_{i+1,2})$};
	\node at (3.4,2) {$\pi(\rho)$};
\end{tikzpicture}
\]
Here the Yang-Baxter equation is
$\llbracket\pi(\rho),\pi(\gamma_{i,1}),\pi(\gamma_{i+1,1})\rrbracket=0$,
so we need $\rho\star\gamma_{i+1,1}=\gamma_{i,1}$ in $\mathfrak{G}$,
or $\rho=\gamma_{i,1}\star\gamma_{i+1,1}'$ in $\mathfrak{G}$. Then
But we want to use the Yang-Baxter equation again so we need
$\rho=\gamma_{i,2}\star\gamma_{i+1,2}'$. Thus we need
\[\gamma_{i,1}\star\gamma_{i+1,1}'=\gamma_{i,2}\star\gamma_{i+1,2}'\]
in $\mathfrak{G}$. In order to finish the train argument (running the
R-matrix past every column) we therefore need:

\begin{desideratum}[Requirement for row solvability]\label{des:rowsolv}
  For every $i,i+1$ we need $\gamma_{i,j}\star\gamma_{i+1,j}'$ to
  be defined and independent of~$j$.
\end{desideratum}

Column solvability would be handled the same way, attaching an R-matrix
above the $j,j+1$ columns. Similarly:

\begin{desideratum}[Requirement for column solvability]\label{des:colsolv}
  For every $j,j+1$ we need $\gamma_{i,j}'\star\gamma_{i,j+1}$ to
  be defined and independent of~$i$.
\end{desideratum}

\begin{proposition}
  Row solvability is equivalent to column solvability.
\end{proposition}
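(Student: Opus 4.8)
The plan is to reduce the global equivalence to a purely local statement about a single $2\times 2$ block of the grid, where it becomes a short manipulation with the groupoid axioms. Fix adjacent rows $i,i+1$ and adjacent columns $j,j+1$, and abbreviate the four entries as $a=\gamma_{i,j}$, $b=\gamma_{i,j+1}$, $c=\gamma_{i+1,j}$, $d=\gamma_{i+1,j+1}$. Desideratum~\ref{des:rowsolv} asks, for each adjacent row pair, that $\gamma_{i,j}\star\gamma_{i+1,j}'$ be defined for every $j$ and independent of $j$; since independence for a finite sequence is equivalent to equality on consecutive columns, this unpacks exactly into the \emph{block row condition}: $a\star c'$ and $b\star d'$ are defined and $a\star c'=b\star d'$. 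Likewise Desideratum~\ref{des:colsolv} unpacks into the \emph{block column condition}: $a'\star b$ and $c'\star d$ are defined and $a'\star b=c'\star d$. Boundary rows and columns are covered because each entry occurs in some block. Thus it suffices to show that, block by block, the row condition and the column condition are equivalent.

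To pass from the row condition to the column condition I would set $\rho=a\star c'=b\star d'$. Composing on the right with $c$ and with $d$ and using the Inverse Axiom gives $a=\rho\star c$ and $b=\rho\star d$. By Lemma~\ref{lem:gpoinvlem} we then have $a'=c'\star\rho'$, so, writing $b=\rho\star d$,
\[ a'\star b=(c'\star\rho')\star b. \]
The Inverse Axiom applied to the defined product $\rho\star d=b$ shows that $\rho'\star b=\rho'\star\rho\star d=d$, and in particular $\rho'\star b$ is defined; since $c'\star\rho'=a'$ is also defined, the Associative Law yields $a'\star b=c'\star(\rho'\star b)=c'\star d$, with both sides defined. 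This is exactly the block column condition.

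The converse I would handle in the mirror-image way: starting from the column condition, set $\sigma=a'\star b=c'\star d$, deduce $b=a\star\sigma$ and $d=c\star\sigma$ by the Inverse Axiom, and run the analogous chain of associations to obtain $a\star c'=b\star d'$ with both products defined. Reassembling the blocks then finishes the proof: row solvability holds if and only if every block satisfies the block row condition, which we have shown equivalent to every block satisfying the block column condition, i.e.\ to column solvability.

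The one genuinely delicate point — the step that is not mere symbol-pushing — will be the bookkeeping of \emph{definedness}. Because $\star$ is only partially defined, every appeal to the Associative Law must be preceded by verifying that the relevant pairwise products are defined (for instance, that both $c'\star\rho'$ and $\rho'\star b$ are defined before associating them), and every cancellation must be justified by the precise form of the Inverse Axiom. Once this is tracked honestly at each stage, the chain of equalities is forced, and no computation with the explicit Boltzmann weights of $\pi$ is needed: the result is a formal consequence of the groupoid structure alone.
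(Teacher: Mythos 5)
Your proof is correct and takes essentially the same route as the paper's: both reduce the claim to a cancellation with the Inverse Axiom and the Associative Law, the paper's version left-multiplying the row relation by $\gamma_{i,j}'$ and right-multiplying by $\gamma_{i+1,j+1}$, which is exactly your computation with $\rho = a \star c'$ unpacked into one chain. Your explicit $2\times 2$ block reduction and the careful tracking of definedness are sound elaborations of the paper's terser argument (which leaves the converse direction and the definedness checks implicit), not a different method.
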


\begin{proof}
  Assume row solvability, so $\gamma_{i, j} \star \gamma_{i + 1, j}'$ is
  defined and $\gamma_{i, j} \star \gamma'_{i + 1, j} = \gamma_{i, j + 1}
  \star \gamma_{i + 1, j + 1}'$. By the inverse groupoid axiom $\gamma_{i, j}'
  \star (\gamma_{i, j} \star \gamma_{i + 1, j}')$ is defined and equals
  $\gamma_{i + 1, j}'$. Therefore $\gamma_{i + 1, j}' = \gamma_{i, j}' \star
  \gamma_{i, j + 1} \star \gamma_{i + 1, j + 1}'$. Another application of the
  inverse axiom gives $\gamma_{i + 1, j}' \star \gamma_{i + 1, j + 1} =
  \gamma_{i, j}' \star \gamma_{i + 1, j}$.
\end{proof}

Now let us show how to construct models satisfying these desiderata. We fix an
element $d \in \mathfrak{G}$, and choose $\phi_1, \cdots, \phi_n \in
\mathfrak{G}$ such that $\Delta (\phi_i) = d$, and $\psi_1, \cdots, \psi_m$
such that $\Delta (\psi_j') = d$. Then $\Delta (\phi_i) = \Delta (\psi_j')$ so
$\gamma_{i, j} := \phi_i \star \psi_j$ is defined for all $i, j$.

\begin{proposition}
  With this construction $\gamma_{i, j} \star \gamma_{i + 1, j}'$ is defined
  and independent of $j$. Hence the row and column solvability desiderata are
  satisfied and the model is solvable.
\end{proposition}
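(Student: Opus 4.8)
The plan is to collapse the composite $\gamma_{i,j}\star\gamma_{i+1,j}'$ to an expression that manifestly does not involve $j$, using only the groupoid axioms together with the object-map identities~(\ref{eq:deltatr}). First I would verify that the composite is actually defined. By construction $\gamma_{i,j}=\phi_i\star\psi_j$, so the identity $\Delta(w)=\Delta(v)$ in~(\ref{eq:deltatr}) gives $\Delta(\gamma_{i,j})=\Delta(\psi_j)$, and likewise $\Delta(\gamma_{i+1,j})=\Delta(\psi_j)$. Hence $\Delta(\gamma_{i,j})=\Delta(\gamma_{i+1,j})=\Delta\bigl((\gamma_{i+1,j}')'\bigr)$, which is exactly the criterion for $\gamma_{i,j}\star\gamma_{i+1,j}'$ to be defined.

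Next I would rewrite the groupoid inverse. By Lemma~\ref{lem:gpoinvlem}, $\gamma_{i+1,j}'=(\phi_{i+1}\star\psi_j)'=\psi_j'\star\phi_{i+1}'$, so the composite becomes $(\phi_i\star\psi_j)\star(\psi_j'\star\phi_{i+1}')$. The crux is to cancel the inner pair $\psi_j\star\psi_j'$. Since $\phi_i\star\psi_j$ and $\psi_j\star\psi_j'$ are both defined, the Associative Law together with the Inverse Axiom gives $(\phi_i\star\psi_j)\star\psi_j'=\phi_i$; and $\psi_j'\star\phi_{i+1}'$ is defined (being $\gamma_{i+1,j}'$). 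I can therefore apply the Associative Law to the triple $(\phi_i\star\psi_j,\ \psi_j',\ \phi_{i+1}')$, whose two adjacent products are both defined, to obtain
\[
\gamma_{i,j}\star\gamma_{i+1,j}'
=(\phi_i\star\psi_j)\star(\psi_j'\star\phi_{i+1}')
=\bigl((\phi_i\star\psi_j)\star\psi_j'\bigr)\star\phi_{i+1}'
=\phi_i\star\phi_{i+1}'.
\]

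The right-hand side no longer depends on $j$, which proves both claims of the Proposition; note that $\phi_i\star\phi_{i+1}'$ is defined because $\Delta(\phi_i)=d=\Delta(\phi_{i+1})$. Independence of $j$ is precisely Desideratum~\ref{des:rowsolv}, so the model is row solvable, and column solvability then follows from the preceding Proposition on the equivalence of row and column solvability. The main obstacle I anticipate is not the algebra but the bookkeeping: each application of the Associative Law is legitimate only when the relevant adjacent compositions are defined, and verifying these definedness conditions is exactly where the object map $\Delta$ and the identities~(\ref{eq:deltatr}) are indispensable—without them one could not justify reassociating the four-fold product.
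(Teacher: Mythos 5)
Your proof is correct and takes essentially the same route as the paper's: both establish definedness from $\Delta(\gamma_{i,j})=\Delta(\psi_j)=\Delta(\gamma_{i+1,j})$ via~(\ref{eq:deltatr}), then collapse $(\phi_i\star\psi_j)\star(\psi_j'\star\phi_{i+1}')$ to $\phi_i\star\phi_{i+1}'$ by cancelling $\psi_j\star\psi_j'$, and invoke the row/column equivalence for solvability. If anything, your write-up is slightly more careful than the paper's, since you make explicit the applications of Lemma~\ref{lem:gpoinvlem} and the Associative Law that the paper leaves implicit.
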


\begin{proof}
  Let us show that $\gamma_{i, j} \star \gamma_{i + 1, j}'$ is defined and
  equals $\phi_i \star \phi_{i + 1}'$. Indeed, $\Delta (\gamma_{i, j}) =
  \Delta (\phi_i \star \psi_j) = \Delta (\psi_j) = \Delta (\gamma_{i + 1,
  j})$. This implies that $\gamma_{i, j} \star \gamma_{i + 1, j}'$ is defined.
  It equals
  \[ \phi_i \star \psi_j \star (\phi_{i + 1} \star \gamma_j)' = \phi_i \star
     \psi_j \star \psi_j' \star \phi_{i + 1}' = \phi_i \star \phi_{i + 1}' .
  \]
  This is independent of $j$ proving row solvability. 
\end{proof}

We have therefore constructed a class of solvable lattice models using a
groupoid parametrized Yang-Baxter equation. We will now address the question
of whether these are truly more general than models that could be constructed
using group parametrized Yang-Baxter equations. After all, we have imposed a
condition that $\Delta (\phi_i) = \Delta (\psi_j') = d$ for some fixed element
$d \in \mathfrak{G}$. How strongly does this constrain the $\gamma_{i, j}$?

Before addressing this question, we mention a criterion for a groupoid to be a
disjoint union of groups. If $\mathfrak{G}$ is a disjoint union of groups, and
$v \in \mathfrak{G}$ then $v \star v$ is always defined, and so $\Delta (v) =
\Delta (v')$. From this we see that if $\llbracket u,v,w\rrbracket=0$ we have
$\Delta(u)=\Delta(v)=\Delta(w)$.

On the other hand, in the six- and five-vertex groupoids, $\Delta (v)$ and
$\Delta (v')$ are decoupled. That is, if we define $M_d = \{ \Delta (v') |v
\in M, \Delta (v) = d \}$, the set $M_d$ is large.

Now consider that
\[ \Delta (\gamma_{i j}) = \Delta (\psi_j), \qquad \Delta (\gamma_{i j}') =
   \Delta (\phi_i') . \]
Although $\Delta (\phi_i) = \Delta (\psi_j') = d$ for all $i, j$, the values
of $\Delta (\psi_j)$ and $\Delta (\phi_i')$ can take arbitrary values from
$M_d$. Thus $\Delta (\gamma_{i j})$ and $\Delta (\gamma_{i j}')$ can also be
made arbitrary elements of $M_d$, subject only to the constraint that $\Delta
(\gamma_{i j})$ depends only on the column $j$, while $\Delta (\gamma_{i j}')$
depends only on the row~$i$. This is in contrast with classical models such as
those in {\cite[Chapter 9]{BaxterBook}}, {\cite{KuperbergASM}} or
{\cite{BBF11}}, where $\Delta (\gamma_{i j})$ and $\Delta (\gamma_{i j}')$ are
necessarily the same for all~$i, j$.

\bibliographystyle{hplain}
\bibliography{groupoid}

\end{document}